\let\div\relax
\DeclareMathOperator\div{div}
\DeclareMathOperator\curl{curl}
\DeclareMathOperator\Id{Id}
\DeclareMathOperator\tr{tr}
\DeclareMathOperator\dist{dist}
\newcommand\dd[0]{\partial}
\newcommand\grad[0]{\nabla}
\newcommand\ulin[0]{u^\flat}
\newcommand\unlin[0]{u^\sharp}
\newcommand\unlinI[0]{u^{\sharp,1}}
\newcommand\unlinII[0]{u^{\sharp,2}}
\newcommand\wlin[0]{\omega^\flat}
\newcommand\wnlin[0]{\omega^\sharp}
\newcommand\eq[1]{\begin{align}{#1}\end{align}}
\newcommand\eqn[1]{\begin{align*}{#1}\end{align*}}
\newcommand\Rd[0]{{\mathbb R^d}}
\newtheorem{thm}{Theorem}[section]
\newtheorem{lem}[thm]{Lemma}
\newtheorem{prop}[thm]{Proposition}
\theoremstyle{remark}
\newtheorem{remark}[thm]{Remark}
\title[High-dimensional Navier-Stokes equations]{A minimum critical blowup rate for the high-dimensional Navier-Stokes equations}
\author{Stan Palasek}
\thanks{The author is grateful to Terence Tao for many helpful discussions and comments on the manuscript. We also thank the anonymous referees for their careful reading and valuable suggestions. This work was partially funded by NSF grant DMS-1764034.}
\address{UCLA Department of Mathematics\\Los Angeles, CA, USA}
\email{palasek@math.ucla.edu}
\begin{document}

\begin{abstract}
    We prove quantitative regularity and blowup theorems for the incompressible Navier-Stokes equations in $\Rd$, $d\geq4$ when the solution lies in the critical space $L_t^\infty L_x^d$. Explicit subcritical bounds on the solution are obtained in terms of the critical norm. A consequence is that $\|u(t)\|_{L_x^d(\Rd)}$ grows at a minimum rate of $(\log\log\log\log(T_*-t)^{-1})^c$ along a sequence of times approaching a hypothetical blowup at $T_*$. We use a quantitative framework inspired by Tao \cite{tao}, with some new elements to deal with the lack of Leray's epochs of regularity in the high-dimensional setting.
\end{abstract}

\maketitle

\section{Introduction}\label{introduction}

In this paper we consider the incompressible Navier-Stokes equations in $\Rd$, $d\geq4$. In the standard manner, we normalize to unit viscosity and project the system onto the space of divergence-free vector fields. This yields
\eq{\label{ns}
\dd_tu-\Delta u+\mathbb P\div u\otimes u=0
}
where $u:[0,T)\times\Rd\to\Rd$ is an unknown vector field and $\mathbb P=1-\Delta^{-1}\grad\div$ is the Leray projection. We will occasionally refer to the pressure $p=-\Delta^{-1}\div\div(u\otimes u)$ which is well-defined in $L_t^\infty L_x^\frac d2$ assuming that $u\in L_t^\infty L_x^d$. Note that $u\in L_t^\infty L_x^d$ is a natural setting to study regularity and blowup of \eqref{ns} because the associated norm is critical, i.e., $\|u\|_{L_t^\infty L_x^d}$ and \eqref{ns} are both invariant under the scaling $u(t,x)\mapsto\lambda u(\lambda^2t,\lambda x)$.

Let us consider, say, a finite-energy solution $u$ of \eqref{ns}  arising from divergence-free Schwartz initial data. If $u$ is smooth on $[0,T_*)$, we say that $u$ blows up at time $T_*$ if there exists no smooth extension to $[0,T_*]$. It is natural to ask what necessary conditions exist for $u$ to blow up at $T_*$, particularly those in  the form of a norm of $u$ becoming unbounded or diverging to $+\infty$. An early result of this type is due to Leray \cite{leray} who proved (originally for only $d=3$) that if $T_*$ is the blowup time, then
\eqn{
\liminf_{t\,\uparrow\, T_*}(T_*-t)^{\frac12-\frac d{2p}}\|u(t)\|_{L_x^p(\Rd)}\geq c
}
where $c=c(d,p)>0$ is an absolute constant and $d<p\leq\infty$. In a similar spirit, the classical work of Prodi, Serrin, and Ladyzhenskaya \cite{prodi,serrin,lady} implies a blowup criterion in which the pointwise lower bound is replaced by an average in time, in such a way that the spacetime norm becomes critical. Specifically, if $T_*$ is the blowup time, then
\eqn{
\int_0^{T_*}\|u(t)\|_{L_x^p(\Rd)}^{2/(1-\frac dp)}dt=\infty.
}
Both of these results degenerate at the $p=d$ endpoint which raises the natural question of whether the critical norm $\|u\|_{L_x^d(\Rd)}$ must become unbounded or diverge as $t\,\uparrow\,T_*$. This is a more difficult question in view of the fact that the spatial norms $\|u\|_{L_x^p}$, $p>d$ are subcritical; thus if $u$ concentrates at increasingly fine scales, one should expect these norms to become large. On the other hand, for the critical norm to grow, it is more likely due to many small concentrations of the solution scattered throughout space, not just the blowup profile itself. For this reason, one needs a genuinely different strategy to address the case $p=d$.

The breakthrough on this problem came in the paper of Escauriaza, Seregin, and \v{S}ver\'ak \cite{ess} who proved the three-dimensional result
\eqn{
\limsup_{t\,\uparrow\,T_*}\|u(t)\|_{L_x^3(\mathbb R^3)}=\infty
}
which was later improved to a pointwise limit as $t\uparrow T_*$ by Seregin \cite{seregin} (which, notably, is still open in the case $d\geq4$). The most important new tool was the backward uniqueness for the heat equation proved in \cite{bu} which is applied to a weak limit of solutions which ``zoom in'' on a potential blowup point. In order to apply backward uniqueness, one needs
pointwise bounds on $u$ and $\grad u$ in a large region away from the blowup point which the authors obtained using a Caffarelli-Kohn-Nirenberg-type $\epsilon$-regularity lemma. This appears to be the main reason that \cite{ess} is limited to $d=3$.

In the general case $d\geq3$, Dong and Du \cite{dongdu} attempt to avoid this issue by defining a local scale-invariant quantity whose smallness, combined with $\|u\|_{L_t^\infty L_x^d}<\infty$, implies local regularity of the solution.\footnote{\label{ddfootnote}Unfortunately there is a gap in the proof of this result in \cite{dongdu} (Lemma 3.2), as pointed out later by Dong and Wang \cite{dw}. Let us emphasize that the present work uses different techniques and is not impacted by this error.} Tools inspired by theirs will prove useful in our quantitative setting as well; see Section \ref{smalltolarge} for details.

In contrast to the classical blowup criteria above, the results using the compactness approach such as \cite{ess,dongdu,seregin,phuc,gkp,albritton}, etc.\ do not imply any quantitative information on the blowup rate of any critical norm. Recently, Tao \cite{tao} gave what appears to be the first quantitative rate for a critical quantity, namely that
\eq{\label{3d}
\limsup_{t\,\uparrow\,T_*}\frac{\|u(t)\|_{L_x^3(\mathbb R^3)}}{(\log\log\log\frac 1{T_*-t})^{c}}=\infty
}
for $d=3$, where $c>0$ is an absolute constant. Instead of proceeding by contradiction as in \cite{ess} and similar works, the idea is to assume the solution is concentrating near a blowup and show directly---using, for example, Carleman estimates instead of their qualitative derivatives like backward uniqueness---that there must be mild $L_x^3$ concentrations at an increasing number of spatial scales. Subsequent authors have used similar schemes to extend \eqref{3d}; for instance Barker and Prange quantify the improvement in \cite{seregin} and prove a theorem for type-1 blowups \cite{barkerprange}, while the author  shows that the denominator in Theorem \ref{blowupthm} can be replaced by $(\log\log\frac1{T_*-t})^c$ in the case that $u$ is axisymmetric \cite{p}.

In this work we are able to prove a quantitative blowup rate analogous to \eqref{3d} for $d\geq4$, answering a question of Tao, see Remark 1.6 in \cite{tao}. As in \cite{tao}, we assume for convenience that $u$ is a classical solution, meaning it is smooth with derivatives in $L_t^\infty L_x^2([0,T]\times\Rd)$. Since our results depend quantitatively on only $\|u\|_{L_t^\infty L_x^d}$, they can in principle be extended, for instance to the Leray-Hopf class as in \cite{ess}.

\begin{thm}\label{blowupthm}
Suppose $u$ is a classical solution of \eqref{ns} that blows up at $t=T_*$ and $d\geq4$. Then
\eqn{
\limsup_{t\,\uparrow\,T_*}\frac{\|u(t)\|_{L_x^d(\mathbb R^d)}}{(\log\log\log\log\frac 1{T_*-t})^{c}}=\infty
}
for a constant $c=c(d)>0$ depending only on the dimension.
\end{thm}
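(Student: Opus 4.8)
The plan is to follow the quantitative strategy of Tao \cite{tao} and reduce Theorem \ref{blowupthm} to a \emph{quantitative regularity} statement: if $u$ is a classical solution on $[0,T_0]$ with $\|u\|_{L_t^\infty L_x^d}\leq A$ (with $A$ large), then $u$ enjoys subcritical bounds such as $\|u(T_0)\|_{L_x^\infty}\lesssim T_0^{-1/2}\exp(A^{O(1)})$, or more precisely a bound of the form $\|\grad^j u(T_0)\|_{L_x^\infty}\lesssim_j T_0^{-(j+1)/2}\exp\exp\exp(A^{O(1)})$ (the precise tower of exponentials is what determines the quadruple logarithm). Granting such an estimate, the blowup rate follows by a now-standard argument: if $\limsup_{t\uparrow T_*}\|u(t)\|_{L_x^d}/(\log\log\log\log\tfrac1{T_*-t})^c<\infty$, then on a time interval $[t_0,T_*)$ one has $\|u\|_{L_t^\infty L_x^d}\leq A$ with $A\approx(\log\log\log\log\tfrac1{T_*-t_0})^c$; applying the regularity estimate on $[t_0,\tfrac12(t_0+T_*)]$ with $T_0\approx T_*-t_0$ gives a bound on $\|u\|_{L_x^\infty}$ at a time arbitrarily close to $T_*$ that, by the local well-posedness / blowup criterion $\int\|u\|_{L_x^\infty}^2=\infty$ (or Leray's $(T_*-t)^{-1/2}$ lower bound), is incompatible with blowup at $T_*$ once $c$ is chosen small enough to beat the exponential tower. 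So the entire content is the quantitative regularity theorem.

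To prove the quantitative regularity theorem I would adapt Tao's scheme, whose skeleton is: (i) propagation of a \emph{local} bounded-total-speed / enstrophy-type quantity via energy estimates and Gronwall, giving an a priori control at a slightly smaller scale; (ii) a \emph{bounded total speed} estimate showing the solution cannot move too far; (iii) \emph{epochs of regularity} — time intervals on which one has good pointwise bounds on $u$ and its derivatives — obtained by combining the energy bounds with local smoothing; (iv) \emph{back-propagation} of $L_x^d$ concentration: if the solution concentrates at scale $R$ near time $t_1$, then at a slightly earlier time and a larger scale there is still nontrivial concentration, proved using a Carleman inequality (the quantitative substitute for backward uniqueness) together with a second Carleman inequality of unique-continuation type to propagate vorticity bounds; (v) iterating (iv) produces concentration at infinitely many distinct scales, which contradicts the finiteness of the (subcritical, summable-over-scales) energy. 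The exponential losses at each of steps (i)–(iv) compound into the triple/quadruple logarithm.

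The main obstacle — and the stated novelty of the paper — is that in dimension $d\geq4$ one does \emph{not} have Leray's structure theorem giving epochs of regularity for Leray--Hopf solutions, which Tao uses crucially in $d=3$ to get the pointwise bounds on $u,\grad u$ needed to even apply the Carleman estimates. So step (iii) must be replaced. The plan here is to use instead a quantitative, scale-invariant local regularity criterion in the spirit of Dong--Du \cite{dongdu} (see footnote~\ref{ddfootnote} and Section~\ref{smalltolarge}): show that on a parabolic cylinder where a certain scale-invariant quantity built from $\|u\|_{L_t^\infty L_x^d}$ is under control, the solution is smooth with effective bounds, \emph{without} needing an epoch of regularity first. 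Concretely one would run a Caffarelli--Kohn--Nirenberg-type $\epsilon$-regularity iteration but track all constants quantitatively, using $\|u\|_{L_t^\infty L_x^d}\leq A$ to bootstrap from an initial weak bound (coming from the energy inequality) up to pointwise bounds, at the cost of a further exponential factor in $A$. Once these effective pointwise bounds on $u$ and $\grad u$ are in hand on a large spatial region, the Carleman-estimate machinery of back-propagation goes through essentially as in \cite{tao}, with the dimension-dependent exponents tracked throughout; and the final summation-over-scales contradiction is dimension-agnostic. I expect the bookkeeping in the CKN-type quantitative regularity step, and its interface with the Carleman estimates (which require $L_x^\infty$ control of $\grad u$ on a set of the right size), to be the technically heaviest part.
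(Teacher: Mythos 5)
Your reduction of Theorem \ref{blowupthm} to the quantitative regularity statement via the Prodi--Serrin--Ladyzhenskaya (or Leray) blowup criterion is exactly the paper's proof of this theorem, which is the one-line deduction from Theorem \ref{regularitythm}; that part is correct, modulo the small slip that the critical bound $A$ on an interval $[t_0,T]$ with $T<T_*$ should be taken of size $(\log\log\log\log\frac1{T_*-T})^c$ (the norm bound grows as $t\uparrow T_*$), not in terms of $T_*-t_0$.

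However, since you chose to sketch a proof of the regularity theorem rather than invoke it, two genuine gaps in that sketch should be flagged. First, your skeleton retains the bounded total speed estimate as step (ii); the paper emphasizes that this property is expected to \emph{fail} for $d\geq4$, and it is replaced by the quantitative small-to-large-scale propagation lemma (Proposition \ref{nested}, a quantitative version of the Dong--Du lemma), which is what powers both the $\epsilon$-regularity criterion (Proposition \ref{regularity}) and the backward propagation of concentrations (Proposition \ref{bp}). Second, and more seriously, the claim that once effective pointwise bounds on $u$ and $\grad u$ are in hand ``on a large spatial region'' the Carleman machinery goes through essentially as in \cite{tao} understates the actual difficulty: quantitative partial regularity in $d\geq4$ does not yield a large regular region on a common time interval (an epoch), only thin slices and annuli (Propositions \ref{slices} and \ref{annuli}). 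One must then (a) guarantee that the back-propagated vorticity concentration intersects the regular set at all --- a priori it could be carried entirely by a fractal family of singular cylinders, and ruling this out is the content of Proposition \ref{prop1}, which uses the maximal-regularity bound \eqref{maximalregularity} for $d\geq5$ and pigeonholing over exponentially many scales for $d=4$; and (b) propagate the vorticity lower bound through a thin slice, where the standard unique-continuation Carleman inequality is geometrically inapplicable (it needs $r^2\gg t_0$ inside the regular set); the paper's fix is the iterated Carleman inequality through an expanding slice (Proposition \ref{iteratedcarleman}), without which one loses an unbounded number of exponentials and cannot reach the quadruple-logarithm rate. So the reduction proving Theorem \ref{blowupthm} is fine, but the sketched proof of Theorem \ref{regularitythm}, as written, would not close.
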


This is a straightforward consequence of our other main theorem which asserts that a solution satisfying the critical bound
\eq{\label{critical}
\|u\|_{L_t^\infty L_x^d([0,T]\times\Rd)}\leq A
}
is regular; in particular we can quantify its subcritical norms in terms of $A$. Let us take $A$ to be at least 2.

\begin{thm}\label{regularitythm}
If $u$ is a classical solution of \eqref{ns} on $[0,T]$ satisfying \eqref{critical} with $d\geq4$, then
\eqn{
\|\grad^ju(t)\|_{L_x^\infty(\Rd)}&\leq \exp\exp\exp\exp(A^{C})t^{-\frac{1+j}2}
}
for $t\in(0,T]$, where $C=C(j,d)$ depends only on $j\geq0$ and the dimension.
\end{thm}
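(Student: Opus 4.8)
The plan is to follow the quantitative strategy of Tao \cite{tao}, adapted to $d\geq 4$. The global scaling symmetry $u(t,x)\mapsto\lambda u(\lambda^2 t,\lambda x)$ preserves both \eqref{critical} and the class of classical solutions, so after rescaling it suffices to prove the bounds at the unit time, say to estimate $\|\grad^j u(1)\|_{L_x^\infty}$ under the hypothesis \eqref{critical} on a time interval comfortably larger than $[0,1]$. I would organize the argument as a sequence of bootstraps in which control of progressively stronger (but still subcritical) quantities is purchased at the cost of progressively worse---but still only iterated-exponential---dependence on $A$. The natural intermediate quantities are: (i) scale-invariant local energy and enstrophy quantities $\int |u|^2$, $\int|\grad u|^2$, $\int|u|^3$ on parabolic cylinders; (ii) an $L_t^\infty L_x^\infty$-type bound on $u$; and finally (iii) the higher-derivative bounds via parabolic smoothing for the (forced) heat equation once $u$ is known to be bounded.

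First I would use the critical bound together with the energy and local energy inequalities to obtain Morrey/Campanato-type control: $\|u\|_{L_t^\infty L_x^d}\leq A$ already controls, via interpolation with the local energy inequality, scale-invariant quantities like $r^{-(d-2)}\int_{Q_r}|\grad u|^2$ and $r^{-d}\int_{Q_r}|u|^3$ on all parabolic cylinders $Q_r$, with bounds polynomial in $A$. This is where the high-dimensional difficulty first bites: in $d=3$ one has Leray's epochs of regularity and the CKN theory in a very clean form, whereas for $d\geq4$ the usual $\epsilon$-regularity criterion is no longer strong enough on its own. Here I would invoke the tools from Section \ref{smalltolarge} (the Dong--Du-type local scale-invariant quantity whose smallness plus $\|u\|_{L_t^\infty L_x^d}<\infty$ gives local regularity), used quantitatively: one shows that on most cylinders the relevant local quantity is small by a pigeonhole/Chebyshev argument off the critical norm, deduces pointwise bounds on $u$ and $\grad u$ there, and then propagates this control. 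Converting the qualitative ``smallness implies regularity'' into an effective pointwise bound with iterated-exponential dependence is, I expect, the technical heart of the proof.

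Next, with pointwise bounds on $u$ in hand on a large space-time region (away from a controlled-size bad set), I would use a Carleman/unique-continuation-free route if possible, but more realistically mimic Tao's spatial-decay and annulus-of-regularity arguments: propagate the pointwise bound on $u$ inward using the local smoothing of \eqref{ns} viewed as a perturbation of the heat equation, i.e. Duhamel with the kernel of $e^{t\Delta}$ against $\mathbb{P}\div(u\otimes u)$, absorbing the quadratic term using the bounds already obtained. Iterating this bootstrap a bounded (dimension-dependent) number of times upgrades $L_x^d$ control to $L_x^\infty$ control of $u$ on $[\tfrac12,1]\times\Rd$, with a bound of the form $\exp\exp\exp\exp(A^C)$---each layer of exponential coming from one stage of the iteration (pigeonhole over scales, small-to-large, spatial localization, and the final derivative estimate). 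I would keep careful track that none of these stages requires more than a fixed finite number of iterations, so the tower height stays at four.

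Finally, once $\|u\|_{L_t^\infty L_x^\infty([\frac12,1]\times\Rd)}$ is bounded by $\exp\exp\exp\exp(A^C)$, the higher derivative bounds $\|\grad^j u(1)\|_{L_x^\infty}\lesssim_{j,d} (\text{same tower})$ follow by standard parabolic regularity: differentiate \eqref{ns}, treat the nonlinearity as a bounded forcing term, and apply interior Schauder/$L^p$ estimates (or, more elementarily, iterated Duhamel with $\grad e^{t\Delta}$) on shrinking time intervals, which at worst multiplies the constant by something depending only on $j$ and $d$. The $t^{-(1+j)/2}$ factor is simply the scaling weight: applying the already-proven estimate at unit time to the rescaled solution $\lambda u(\lambda^2\cdot,\lambda\cdot)$ with $\lambda=t^{-1/2}$ reinstates the correct homogeneity. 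The main obstacle, to reiterate, is the second step---obtaining an \emph{effective}, iterated-exponential small-to-large / $\epsilon$-regularity statement valid for $d\geq4$, since this is precisely where the absence of Leray's epochs of regularity forces new quantitative input beyond what \cite{tao} needed in three dimensions.
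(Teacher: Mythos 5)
There is a genuine gap, and it sits exactly where you deferred the difficulty. Your plan is to pigeonhole the critical norm to make the Dong--Du quantity small on \emph{most} cylinders, get pointwise bounds there, and then upgrade $L_x^d$ to $L_x^\infty$ everywhere by a Duhamel bootstrap that ``absorbs the quadratic term using the bounds already obtained.'' This cannot close: $L_x^d$ is critical, so the perturbative Duhamel iteration only works when the critical norm is small, and for large $A$ the quadratic term is not absorbable --- the bounds you would need to absorb it are precisely the subcritical bounds you are trying to prove, so the argument is circular. Moreover, smallness of $C+D$ on \emph{most} cylinders says nothing about the exceptional cylinders, and those are exactly where a hypothetical concentration (and hence the largest values of $|\grad^j u|$) would live; no covering or averaging argument off \eqref{critical} alone rules them out. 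Your suggestion that one might hope for a ``Carleman/unique-continuation-free route'' discards the one mechanism that makes a quantitative theorem at the critical level possible.

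What the paper actually does is prove the contrapositive of a concentration-compactness-type statement with explicit constants: if $|P_{N_0}u(z_0)|\geq A_1^{-1}N_0$, then (Proposition \ref{prop1}) this concentration can be propagated backward in time into a small cylinder where $u$ is quantitatively regular, and then (Proposition \ref{prop2}) forward again through slices and annuli of regularity (Propositions \ref{slices}, \ref{annuli}) by iterated unique-continuation and backward-uniqueness Carleman inequalities (Propositions \ref{uc}, \ref{iteratedcarleman}), producing a definite amount of $L_x^d$ mass at the final time at each of $\sim\log(TN_0^2)$ separated scales; summing and comparing with \eqref{critical} forces $TN_0^2\leq\exp\exp\exp\exp(A_6)$. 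Taking the contrapositive yields the frequency bound \eqref{Nstar}, i.e.\ $\|P_Nu\|_{L_{t,x}^\infty}\leq A_1^{-1}N$ for all $N\geq N_*$, and it is \emph{this} smallness at high frequencies --- not an $\epsilon$-regularity covering --- that lets one run energy/Gronwall estimates on the enstrophy-type quantities $E_n$ for the sharp part $\unlin$ of the decomposition of Proposition \ref{decomposition}, absorbing the vortex-stretching terms above frequency $N_*$ into the dissipation and paying $N_*^{O(1)}$ below it, before concluding by Sobolev embedding. Your outline contains neither the frequency-localized concentration statement, nor the backward propagation (Proposition \ref{bp}), nor the Carleman machinery, nor the summation over scales against the critical norm, so the central quantitative step of the theorem is missing rather than merely sketched.
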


\begin{remark}\label{remark}
Using ideas from \cite{p}, particularly Proposition 8, it is possible to improve the bounds in Theorems \ref{blowupthm} and \ref{regularitythm} if some mild symmetry assumptions are made on $u$. For example, suppose $u$ is axisymmetric\footnote{By this we mean the following: when regarded in the coordinate system which consists of polar coordinates $(r,\theta)$ in the $x_1,x_2$-plane and Cartesian coordinates in the rest, we have $u(x)=R_\theta(u(R_{-\theta}x))$ where $R_\theta$ denotes counterclockwise rotation by $\theta$ in the $x_1,x_2$-plane.} about the $x_3,x_4,\ldots,x_d$-plane. When $d=4$, one $\log$ and one $\exp$ can be removed from Theorems \ref{blowupthm} and \ref{regularitythm} respectively. When $d\geq5$, we may remove two $\log$s and two $\exp$s. In the latter case, in the proof of Proposition \ref{prop1}, we find the desired concentration at length scale $\ell=A^{-O(1)}$ using the slightly improved energy bound \eqref{maximalregularity}, while when $d=4$, we resort to pigeonholing the energy over $A^{O(1)}$-many length scales which yields an $\ell$ as small as $\exp(-A^{O(1)})$. An argument similar to Proposition 8 in \cite{p} allows one to avoid losing additional exponentials when locating annuli of regularity as in Proposition \ref{annuli}.
\end{remark}

Let us summarize why the approach in \cite{tao} breaks down in greater than three dimensions. The first set of difficulties arises when one would use the ``bounded total speed'' property, i.e., control on $\|u\|_{L_t^1L_x^\infty}$, see Proposition 3.1(ii) in \cite{tao}. One expects (for example, based on the heuristics following Proposition 9.1 in \cite{othertao}) that this property fails when $d\geq4$. In other words, one cannot expect any kind of ``speed limit'' for elements convected by $u$. Instead, we derive a procedure to propagate concentrations of the velocity and pressure from fine to coarse scales, encapsulated in Proposition \ref{nested}, which is a quantitative version of Lemma 3.2 in \cite{dongdu}. From this we can extract several important results including an $\epsilon$-regularity criterion (Proposition \ref{regularity}) and the backward-propagation lemma (Proposition \ref{bp}).

The second and more significant challenge in high dimensions is due to the lack of quantitative epochs of regularity as in Proposition 3.1(iii) in \cite{tao}. In the qualitative analysis, it suffices to use epochs of regularity for which one has absolutely no lower bound on the length, nor any explicit upper bound on $|u|$, $|\grad u|$, etc. (For example, see the use of Proposition 2.4 in the proof of Proposition 5.3 in \cite{dongdu}.) This becomes a problem when one needs to propagate concentrations of vorticity through space and into a distant annulus of regularity, as the width of the time interval on which one has regularity determines the lower bound one can extract from unique continuation for the heat equation. We will remedy this by substituting spacetime partial regularity in place of epochs of regularity. This creates some new difficulties; first that when one propagates a high frequency concentration of the solution backward in time, a priori there is no guarantee that the resulting concentration has any of its $L_{t,x}^2$ mass inside the regular region. There is a particular fractal arrangement of concentrations in spacetime which is consistent with this obstruction; indeed the objective of Proposition \ref{prop1} is to locate a scale where we may rule it out.

The second difficulty faced when propagating the vorticity using only partial regularity is the following: the usual Carleman inequality for unique continuation has as its domain a large ball in space (compared to the length of the time interval); however we wish to propagate the vorticity for a great distance through a thin spacetime slice. We are able to accomplish this without the bounds suffering too badly (losing only one additional exponential compared to the $d=3$ case) by repeatedly applying the Carleman inequality in a series of moving and expanding balls lying in an expanding slice of spacetime. We show that the iteration of unique continuation accelerates exponentially away from the initial vorticity concentration. The positive feedback loop this creates is essential for arriving at the claimed bounds, as unique continuation through a uniformly thin slice would lead to an unbounded number of logarithms and exponentials in Theorems \ref{blowupthm} and \ref{regularitythm}.

The plan of the paper is as follows: in Section \ref{preliminaries}, we give some preliminaries including a useful decomposition for estimating solutions in high-integrability spaces. In Section \ref{tools}, we prove some straightforward energy estimates, then introduce our quantitative analogue of the Dong-Du lemma for propagating concentrations of the solution from fine to coarse scales. Then we apply it to partial regularity and backward propagation of high frequency concentrations. In Section \ref{carleman} we quote the high-dimensional version of the Carleman inequality from \cite{tao} for quantitative unique continuation, then show the iteration through a slice expanding in space. In Section \ref{mainprop}, we prove the main propositions: first the back propagation into a regular cylinder, then the successive use of Carleman inequalities to propagate it forward to the final time. Finally, in Section \ref{proofoftheorems}, we use Propositions \ref{prop1} and \ref{prop2} to prove Theorems \ref{blowupthm} and \ref{regularitythm}.

\section{Preliminaries}\label{preliminaries}

\subsection{Notation}\label{notation}

We use asymptotic notation $X\lesssim Y$ or $X=O(Y)$ to mean that there is a constant $C(d)$ depending only on the spatial dimension such that $|X|\leq C(d)Y$. Moreover $X\sim Y$ is an abbreviation for $X\lesssim Y\lesssim X$. A subscript on $\lesssim$ or $O$ indicates that the constant may depend on additional parameters. As in \cite{tao}, we define the hierarchy of powers $A_j=A^{C_0^j}$ where $A$ is as in \eqref{critical} and $C_0$ is a large constant which depends only on the dimension $d$. Throughout the arguments we will freely enlarge $A$ so that $A\geq C_0$ and $C_0$ so that it defeats any constants in the asymptotic notation.

If $I\subset\mathbb R$ is a time interval, we use $|I|$ to denote its length. If $\Omega\subset\Rd$, $|\Omega|$ will denote its $d$-dimensional Lebesgue measure. For a finite set $E$, we denote its cardinality by $\#(E)$. If $x_0\in\Rd$ and $R>0$, we will write $B(x_0,R)$ to denote the closed ball $\{x\in\Rd:|x-x_0|\leq R\}$. If $z_0=(t_0,x_0)\in\mathbb R\times\Rd$ is a spacetime point, define the parabolic cylinder $Q(z_0,R):=[t_0-R^2,t_0]\times B(x_0,R)$ and $Q(R):=Q(0,R)$. If $B=B(x_0,R)$ and $Q=Q(z_0,R)$ where $z_0=(t_0,x_0)$, we define the dilations $\lambda B:=B(x_0,\lambda R)$ and $\lambda Q:=Q(z_0,\lambda R)$.

For vectors $u,v\in\Rd$, define the tensor products
\eqn{
(u\otimes v)_{ij}:=u_iv_j,\quad u\odot v:=\frac12(u\otimes v+v\otimes u),\quad u^{\otimes2}:=u\otimes u.
}
We also make use of the Frobenius inner product $A:B:=\tr(A^tB)$.

For $\Omega\subset\mathbb R^n$ and $I\subset\mathbb R$, we will use the Lebesgue norms
\eqn{
\|f\|_{L_x^q(\Omega)}:=\left(\int_\Omega|f(x)|^qdx\right)^{1/q}
}
and
\eqn{
\|f\|_{L_t^pL_x^q(I\times\Omega)}:=\left(\int_I\|f(t,\cdot)\|_{L_x^q(\Omega)}^pdt\right)^{1/p}
}
with the usual modifications if $p=\infty$ or $q=\infty$. When $p=q$ we use the abbreviation $L_{t,x}^p:=L_t^pL_x^p$.

For a Schwartz function $f:\Rd\to\mathbb R^n$, we define the Fourier transform
\eqn{
\hat f(\xi)=\int_{\Rd}e^{-ix\cdot\xi}f(x)dx
}
and the Littlewood-Paley projection by the formula
\eqn{
\widehat{P_{\leq N}f}(\xi):=\varphi(\xi/N)\hat f(\xi)
}
where $\varphi:\mathbb R^3\to\mathbb R$ is a radial bump function supported in $B(0,1)$ such that $\varphi\equiv1$ in $B(0,1/2)$. Then let
\eqn{
P_N&:=P_{\leq N}-P_{\leq N/2},\quad P_{>N}:=1-P_{\leq N},\quad P_{N\leq\,\cdot\,\leq M}:=P_{\leq M}-P_{\leq N/2},
}
etc. Any sums indexed by capital letters such as $\sum_N$ or $\sum_{N>A}$ should be taken to have indicies ranging over the dyadic integers $2^\mathbb Z$.

\subsection{Bernstein-type inequalities}

The following bounds on frequency-localized Fourier multipliers will prove useful, see Lemma 2.1 in \cite{tao}.

\begin{lem}\label{bernstein}
Let $m\in C^\infty(\Rd\to\mathbb C)$ be a multiplier supported in $B(N)$ for some frequency $N>0$, obeying
\eqn{
|\grad^jm(\xi)|\leq MN^{-j}
}
for all $0\leq j\leq 100d$ for some $M>0$. Then the Fourier multiplier $\widehat{T_mf}(\xi):=m(\xi)f(\xi)$ satisfies the bound
\eqn{
\|T_mf\|_{L^q(\Rd)}\lesssim_{p,q} MN^{\frac dp-\frac dq}\|f\|_{L^p(\Rd)}
}
assuming $1\leq p\leq q\leq\infty$ and $f\in L^p(\Rd)$ is smooth.
\end{lem}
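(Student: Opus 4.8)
The plan is to realize $T_m$ as convolution against a kernel whose $L^r$ norms are controlled by $M$ uniformly in $r$, and to close with Young's convolution inequality. First I would reduce to the case $N=1$ by the rescaling $\xi\mapsto N\xi$: setting $\tilde m(\xi):=m(N\xi)$, one has $\supp\tilde m\subseteq B(1)$ and $|\grad^j\tilde m(\xi)|=N^j|(\grad^jm)(N\xi)|\leq M$ for all $0\leq j\leq 100d$, so $\tilde m$ satisfies the hypotheses with frequency $1$; the general estimate is then recovered by tracking this dilation at the end.

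With $N=1$, write $T_mf=f*K$, where $K(x):=(2\pi)^{-d}\int_{\Rd}e^{ix\cdot\xi}m(\xi)\,d\xi$ is the inverse Fourier transform of $m$. Since $m$ is smooth with compact support, integrating by parts in $\xi$ (no boundary terms) gives $x^\alpha K(x)=i^{|\alpha|}(2\pi)^{-d}\int_{\Rd}e^{ix\cdot\xi}\,\dd_\xi^\alpha m(\xi)\,d\xi$ for every multi-index $\alpha$. For $|\alpha|\leq 100d$ the bounds $|\dd^\alpha m|\leq M$ and $\supp m\subseteq B(1)$ give $|x^\alpha K(x)|\lesssim_d M$, and $\alpha=0$ gives $|K(x)|\lesssim_d M$. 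Summing over $|\alpha|=100d$ and combining with the trivial bound yields the pointwise decay $|K(x)|\lesssim_d M(1+|x|)^{-100d}$.

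Since $100d>d$, the weight $(1+|x|)^{-100d}$ lies in $L^r(\Rd)$ for every $r\in[1,\infty]$, and---the one point meriting a little care---its $L^r$ norm is bounded by a constant depending only on $d$, \emph{uniformly} in $r$ (for $1\leq r<\infty$ one has $\int_{\Rd}(1+|x|)^{-100dr}\,dx\leq\int_{\Rd}(1+|x|)^{-100d}\,dx<\infty$). Hence $\|K\|_{L^r(\Rd)}\lesssim_d M$ for all $r\in[1,\infty]$, and Young's inequality gives $\|T_mf\|_{L^q}=\|f*K\|_{L^q}\leq\|K\|_{L^r}\|f\|_{L^p}\lesssim_d M\|f\|_{L^p}$ with $r$ determined by $\tfrac1r=1-\tfrac1p+\tfrac1q$. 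This $r$ belongs to $[1,\infty]$ exactly when $p\leq q$, which is where that hypothesis enters. Undoing the rescaling replaces $K$ by $N^d\tilde K(N\,\cdot\,)$, whose $L^r$ norm is $N^{d(1-1/r)}=N^{d/p-d/q}$ times $\|\tilde K\|_{L^r}\lesssim_d M$; this supplies the factor $N^{d/p-d/q}$ and finishes the proof. I do not expect any genuine obstacle here---the argument is a routine Fourier-multiplier estimate---beyond taking care that the kernel bound is uniform in $r$, which is exactly why one wants decay faster than $|x|^{-d}$, comfortably ensured by the $100d$ derivative bounds on $m$.
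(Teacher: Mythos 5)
Your proof is correct and is essentially the standard argument behind this lemma, which the paper does not prove itself but quotes from Tao (Lemma 2.1 of \cite{tao}), where it is established in exactly this way: rescale to $N=1$, bound the convolution kernel by $M(1+|x|)^{-100d}$ via integration by parts, and apply Young's inequality, with the factor $N^{\frac dp-\frac dq}$ recovered from the dilation. The details you flag (uniformity of the kernel's $L^r$ norm in $r$, and the condition $p\leq q$ ensuring $r\in[1,\infty]$) are handled correctly.
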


Let us record a useful application of Lemma \ref{bernstein}. Taking $P_Ne^{t\Delta}$ and summing, we obtain
\eq{\label{heat}
\|\grad^je^{t\Delta}f\|_{L_x^q}&\lesssim_{p,q,j} t^{-\frac12(\frac dp-\frac dq+j)}\|f\|_{L_x^p}
}
for $t>0$, $j\geq0$, and $1\leq p\leq q\leq\infty$. Moreover, combining Lemma \ref{bernstein} with \eqref{ns} and \eqref{critical}, the following bounds on the frequency-localized vector fields are immediate.

\begin{lem}\label{frequencylocalized}
If $u$ solves \eqref{ns} on $[-T,0]$ and admits the bound \eqref{critical}, then we have
\eqn{
\|\grad^jP_Nu\|_{L_{t,x}^\infty([-T,0]\times\Rd)}\lesssim_j AN^{1+j},\quad\|\dd_tP_Nu\|_{L_{t,x}^\infty([-T,0]\times\Rd)}\lesssim A^2N^3
}
for all $j\geq0$, $N>0$.
\end{lem}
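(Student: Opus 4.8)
The plan is to read off both estimates directly from the Bernstein-type bound, Lemma \ref{bernstein}, using the critical hypothesis \eqref{critical} for the velocity bound and additionally the equation \eqref{ns} for the time-derivative bound. No compactness or iteration is needed; this is a routine symbol computation.

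For the first estimate, fix $j\geq0$ and a dyadic frequency $N$. The operator $\grad^jP_N$ is a Fourier multiplier whose symbol is a finite sum of terms $(i\xi)^\alpha\psi(\xi/N)$ with $|\alpha|=j$, where $\psi=\varphi(\cdot)-\varphi(2\,\cdot)$ is a bump adapted to the annulus $\{|\xi|\sim1\}$; hence it is supported in $B(O(N))$ and satisfies $|\grad^k m(\xi)|\lesssim_j N^j(O(N))^{-k}$ for $0\leq k\leq100d$, by scale-invariance of the derivative bounds on $\psi$. Thus Lemma \ref{bernstein} applies with $M\sim_j N^j$ and frequency scale $O(N)$, giving $\|\grad^jP_Nf\|_{L_x^q}\lesssim_{j,p,q}N^jN^{\frac dp-\frac dq}\|f\|_{L_x^p}$. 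Taking $p=d$, $q=\infty$, and $f=u(t,\cdot)$ (which is smooth and in $L_x^d$ since $u$ is a classical solution obeying \eqref{critical}) yields $\|\grad^jP_Nu(t)\|_{L_x^\infty}\lesssim_j N^{1+j}\|u(t)\|_{L_x^d}\leq AN^{1+j}$, and taking the supremum over $t\in[-T,0]$ gives the claim.

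For the second estimate, apply $P_N$ to \eqref{ns} to write $\dd_tP_Nu=\Delta P_Nu-P_N\mathbb P\div(u\otimes u)$. The first term is controlled by the case $j=2$ just proved: $\|\Delta P_Nu\|_{L_x^\infty}\leq\|\grad^2P_Nu\|_{L_x^\infty}\lesssim AN^3\leq A^2N^3$ since $A\geq2$. For the second term, the operator $P_N\mathbb P\div$ acting on $2$-tensors is a Fourier multiplier whose symbol is the product of the annular cutoff $\psi(\xi/N)$ with the degree-one homogeneous (and smooth away from the origin) symbol of $\mathbb P\div$; localized to $|\xi|\sim N$ it therefore satisfies $|\grad^k(\cdot)|\lesssim N^{1-k}$ on its support, so Lemma \ref{bernstein} applies with $M\sim N$. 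Choosing $p=d/2$, $q=\infty$ gives $\|P_N\mathbb P\div(u\otimes u)\|_{L_x^\infty}\lesssim N\cdot N^{2}\|u\otimes u\|_{L_x^{d/2}}\leq N^3\|u\|_{L_x^d}^2\leq A^2N^3$, using that $u\otimes u\in L_t^\infty L_x^{d/2}$ by \eqref{critical}. Adding the two contributions and taking the supremum over $t$ yields $\|\dd_tP_Nu\|_{L_{t,x}^\infty}\lesssim A^2N^3$.

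There is no real obstacle; the only points requiring a moment's care are that the Littlewood–Paley symbol $\varphi(\cdot/N)-\varphi(\cdot/(N/2))$ obeys scale-invariant derivative bounds so that $M$ picks up exactly the expected power of $N$, and that $u\otimes u$ lies in $L_t^\infty L_x^{d/2}$ and is smooth, both immediate from \eqref{critical} and the classical-solution hypothesis. One could alternatively derive the time-derivative bound directly from the mild formulation and \eqref{heat}, but using \eqref{ns} pointwise in $t$ is cleaner.
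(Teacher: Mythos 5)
Your proof is correct and is exactly the argument the paper has in mind: the paper states the lemma as an immediate consequence of Lemma \ref{bernstein}, \eqref{ns}, and \eqref{critical}, and your two applications of Bernstein (with $p=d$, $q=\infty$ for $\grad^jP_Nu$, and with the degree-one symbol of $P_N\mathbb P\div$ together with $p=d/2$, $q=\infty$ for the nonlinear term) are precisely how one fills in that remark.
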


As in \cite{tao}, Lemma \ref{bernstein} has a spatially localized version.

\begin{lem}\label{localbernstein}
Let $m$, $N$, and $M$ be as in Lemma \ref{bernstein}, $\Omega\subset\Rd$ open, $B\geq1$, $K\geq10$, and $\Omega_{B/N}:=\{x\in\Rd:\dist(x,\Omega)<B/N\}$. Then we have
\eqn{
\|T_mf\|_{L^{q_1}(\Omega)}&\lesssim_{p_1,p_2,q_1,q_2,K} MN^{\frac d{p_1}-\frac d{q_1}}\|f\|_{L^{p_1}(\Omega_{B/N})}+B^{-K}M|\Omega|^{\frac1{q_1}-\frac1{q_2}}N^{\frac d{p_2}-\frac d{q_2}}\|f\|_{L^{p_2}(\Rd)}
}
assuming $1\leq p_1\leq q_1\leq\infty$, $1\leq p_2\leq q_2\leq\infty$, and $q_2\geq q_1$.
\end{lem}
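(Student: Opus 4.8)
The plan is to realize $T_m$ as convolution with a kernel, $T_m f=\mathcal K*f$ with $\widehat{\mathcal K}=m$, to record fast polynomial decay of $\mathcal K$ at spatial scale $1/N$, and then to split $f$ into its restriction to $\Omega_{B/N}$ and the complementary piece. For the kernel I would argue in the usual way: the trivial estimate gives $|\mathcal K(x)|\lesssim MN^d$, while integrating by parts $j$ times in the formula $\mathcal K(x)=(2\pi)^{-d}\int_{\Rd}e^{ix\cdot\xi}m(\xi)\,d\xi$ via $|x|^{2j}e^{ix\cdot\xi}=(-\Delta_\xi)^j e^{ix\cdot\xi}$, using $|\grad^{2j}m|\leq MN^{-2j}$ and $\supp m\subset B(N)$, gives $|\mathcal K(x)|\lesssim_j MN^{d}(N|x|)^{-2j}$. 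Choosing $j$ with $2j\geq K+d$ (admissible since $m$ is assumed to have $100d$ bounded derivatives) and combining the two bounds yields
\[
|\mathcal K(x)|\lesssim_{K}MN^{d}(1+N|x|)^{-K-d}.
\]
Rescaling $x\mapsto Nx$, this gives $\|\mathcal K\|_{L^{r}(\Rd)}\lesssim_{K}MN^{d-d/r}$ and, since $B\geq1$, $\|\mathcal K\|_{L^{r}(\{|x|\geq B/N\})}\lesssim_{K}MN^{d-d/r}B^{-K}$ for every $1\leq r\leq\infty$.

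I would then decompose $f=f_1+f_2$ with $f_1=f\,\mathbf 1_{\Omega_{B/N}}$ and $f_2=f\,\mathbf 1_{\Rd\setminus\Omega_{B/N}}$, and estimate the contributions of each to $T_mf$ on $\Omega$ (note $f_1$ is not smooth, so one reruns the kernel bound rather than citing Lemma \ref{bernstein} directly). For $f_1$, Young's inequality with $\tfrac1r=1+\tfrac1{q_1}-\tfrac1{p_1}$ — which lies in $[0,1]$ precisely because $p_1\leq q_1$ — gives
\[
\|T_mf_1\|_{L^{q_1}(\Omega)}\leq\|\mathcal K*f_1\|_{L^{q_1}(\Rd)}\leq\|\mathcal K\|_{L^{r}(\Rd)}\|f\|_{L^{p_1}(\Omega_{B/N})}\lesssim_{K}MN^{\frac d{p_1}-\frac d{q_1}}\|f\|_{L^{p_1}(\Omega_{B/N})},
\]
since $d-d/r=d/p_1-d/q_1$; this is the first term. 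For $f_2$, the geometric point is that if $x\in\Omega$ and $y\in\supp f_2$ then $\dist(y,\Omega)\geq B/N$, hence $|x-y|\geq B/N$, so on $\Omega$ one has the pointwise bound $|T_mf_2(x)|\leq\bigl(|\mathcal K|\,\mathbf 1_{\{|\cdot|\geq B/N\}}\bigr)*|f|(x)$. Young's inequality with $\tfrac1r=1+\tfrac1{q_2}-\tfrac1{p_2}\in[0,1]$ (this uses $p_2\leq q_2$) and the tail bound above then give $\|T_mf_2\|_{L^{q_2}(\Omega)}\lesssim_{K}MN^{\frac d{p_2}-\frac d{q_2}}B^{-K}\|f\|_{L^{p_2}(\Rd)}$, and since $q_1\leq q_2$, Hölder on $\Omega$ gives $\|T_mf_2\|_{L^{q_1}(\Omega)}\leq|\Omega|^{\frac1{q_1}-\frac1{q_2}}\|T_mf_2\|_{L^{q_2}(\Omega)}$, which is the second term. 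Summing the two bounds proves the lemma.

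Since this is essentially a localization of Lemma \ref{bernstein}, I do not expect a genuine obstacle. The one step requiring care is the exponent bookkeeping: one must check that both Young exponents $1/r$ genuinely lie in $[0,1]$ — this is exactly what the hypotheses $p_1\leq q_1$ and $p_2\leq q_2$ furnish, and why they cannot be removed — that the various powers of $N$, $B$, and $|\Omega|$ assemble into precisely the stated expression, and that the kernel has enough decay both to belong to the relevant $L^r$ spaces and to produce the gain $B^{-K}$, which only consumes $O(K+d)$ of the $100d$ derivatives of $m$ that the hypothesis provides.
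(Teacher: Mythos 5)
Your proof is correct and is essentially the same argument as the one the paper relies on (the lemma is quoted from \cite{tao}, whose proof is exactly this: pointwise decay of the kernel of $T_m$ at scale $1/N$ via integration by parts, the near/far splitting $f=f\,\mathbf 1_{\Omega_{B/N}}+f\,\mathbf 1_{\Rd\setminus\Omega_{B/N}}$, Young's inequality for each piece, and H\"older in $\Omega$ to pass from $q_2$ to $q_1$). The one caveat, which you already flag, is that the $B^{-K}$ gain consumes roughly $K+d$ derivatives of $m$, so with only the stated $100d$ derivatives the argument covers $K$ up to $O(d)$; this is harmless here since in the paper's applications $m$ is a smooth Littlewood--Paley-type multiplier with bounds on arbitrarily many derivatives.
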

If $f$ admits a decomposition $\sum f_i$, the same proof found in \cite{tao} allows the second term on the right-hand side in the Bernstein inequality to be estimated separately for each $f_i$, each with its own choice of $p_2$ and $q_2$. (This will be useful when paired with Proposition \ref{decomposition}.)

In a similar spirit, the following simple result will be useful.
\begin{lem}\label{localize}
If $N,K>0$, $j\geq0$, $p\leq q$, $0<r_1<r_2$, $f\in C^\infty(\Rd)$, and $\phi\in C_c^\infty(\Rd)$ with $\phi\equiv1$ in $B(r_2)$, then
\eqn{
\|P_N\grad^jf\|_{L^p(B(r_1))}&\lesssim_{r_1,r_2,p,q,j,K,\phi}\|P_N(\phi \grad^jf)\|_{L^p(B(r_1))}+N^{-K}\|f\|_{L^q(\Rd)}.
}
\end{lem}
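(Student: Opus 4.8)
The plan is to exploit the almost-locality of the Littlewood-Paley projection $P_N$: its convolution kernel $\check\psi_N(x) = N^d\check\psi(Nx)$ is Schwartz and hence decays faster than any polynomial, so $P_N$ applied to a function only "sees" distant values of that function in a rapidly decaying way. First I would write $\grad^j f = \phi\,\grad^j f + (1-\phi)\grad^j f$ and apply $P_N$ to both pieces, so that the task reduces to estimating $\|P_N\big((1-\phi)\grad^j f\big)\|_{L^p(B(r_1))}$ by the error term $N^{-K}\|f\|_{L^q(\Rd)}$. Note $(1-\phi)$ is supported outside $B(r_2)$ while we measure the $L^p$ norm over $B(r_1)$ with $r_1 < r_2$, so there is a fixed positive gap $r_2 - r_1$ between the support of the bad piece and the region of evaluation.

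For $x \in B(r_1)$ and $y \in \supp(1-\phi) \subset \Rd\setminus B(r_2)$ we have $|x-y|\geq r_2-r_1 > 0$, so
\eqn{
\big|P_N\big((1-\phi)\grad^j f\big)(x)\big| = \left|\int_{|x-y|\geq r_2-r_1} N^d\check\psi(N(x-y))(1-\phi(y))\grad^j f(y)\,dy\right|.
}
Using the rapid decay $|\check\psi(\xi)|\lesssim_{L,r_1,r_2} (1+|\xi|)^{-L}$ for $L$ as large as we like, on the region $|N(x-y)|\geq N(r_2-r_1)$ we bound the kernel by $N^d (N|x-y|)^{-L}$. Choosing $L$ large enough (depending on $K$, $d$, $q$, and $r_2-r_1$) and integrating against $\grad^j f$, then moving derivatives off $f$ via integration by parts if one prefers to avoid referencing $\grad^j f$ in the final norm — or simply keeping $\grad^j f$ and noting this only changes which polynomial power of $N$ is gained — Hölder's inequality in the $y$-integral together with the polynomial decay gives a factor $N^{d - L + (\text{something})}$ times $\|\grad^j f\|_{L^q}$ over the far region, which after a further integration by parts in $y$ (or a crude Bernstein bound $\|\grad^j f\|_{L^q}\lesssim$ ... ) can be converted to $\|f\|_{L^q(\Rd)}$; taking $L$ sufficiently large absorbs all powers of $N$ and leaves $N^{-K}$. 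The $L^p(B(r_1))$ norm in $x$ is then harmless since $B(r_1)$ has finite measure depending only on $r_1$.

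The main obstacle, such as it is, is purely bookkeeping: one must be careful that the kernel estimate genuinely produces an arbitrarily negative power of $N$ and that the $j$ derivatives on $f$ are handled cleanly — the cleanest route is to integrate by parts $j$ times in $y$ so the derivatives land on the (Schwartz) kernel, leaving $\|f\|_{L^q(\Rd)}$ directly and only improving the decay. There is no analytic difficulty here; the content is entirely the standard "tails of a Schwartz kernel are negligible at fixed positive distance" principle, and the proof is identical in spirit to the localized Bernstein inequality of Lemma \ref{localbernstein} quoted from \cite{tao}.
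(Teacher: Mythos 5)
Your argument is essentially the paper's own proof: decompose $\grad^j f$ into $\phi\,\grad^j f$ plus $(1-\phi)\grad^j f$, note that for $x\in B(r_1)$ the far piece only involves the kernel $\check\psi$ at distance $\gtrsim N(r_2-r_1)$, and then use integration by parts (so the $j$ derivatives land on the Schwartz kernel), the polynomial decay of $\check\psi$ and its derivatives, and H\"older to produce the $N^{-K}\|f\|_{L^q(\Rd)}$ error. Just discard the parenthetical alternative of keeping $\grad^j f$ and invoking a ``crude Bernstein bound'' to pass to $\|f\|_{L^q}$ --- $f$ carries no frequency localization, so that step has no justification --- and stick with the integration-by-parts route you correctly identify as the clean one.
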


\begin{proof}
With $\psi(\xi)$ the Fourier multiplier for $P_1$, we have
\eqn{
P_N\grad^jf(x)&=\int_{\Rd}\check\psi(y)(\phi \grad^jf)(x-y/N)dy+\int_{\Rd\setminus B(cN)}\check\psi(y)((1-\phi)\grad^jf)(x-y/N)dy
}
as long as $x$ is restricted
to $B(r_1)$ and $c$ is chosen sufficiently small compared to $r_2-r_1$. The first term is exactly $P_N(\phi\grad^jf)(x)$ and the second term is straightforward to estimate using integration by parts, polynomial decay of $\check\psi$ and its derivatives, and H\"older's inequality.
\end{proof}

\subsection{Sharp-flat decomposition of the solution}

A difficulty of working in $L_x^d$ is that while one would wish to make use of energy methods, the solution does not have enough decay to be in any $L_x^2$-based spaces. In the cases $d=3,4$ one can avoid this problem by some manner of splitting $u$ into one flow solving a linear equation and another that solves a complementary nonlinear equation, see \cite{calderon,tao}. For example, the method in \cite{tao} of considering $u(t)-e^{(t-t_0)\Delta}u(t_0)$, i.e., removing the heat flow part of the evolution, leaves the remaining nonlinear flow in $L_t^\infty L_x^p$ for $p\in[\frac d2,d]$. Unfortunately when $d\geq5$, this range excludes the important energy space $L_t^\infty L_x^2$.

In the general case $d\geq3$ we address this difficulty using the following decomposition of $u$ very similar to the one in \cite{p}. We remark that decompositions based on a Picard-type iteration in the same spirit have also appeared in \cite{cp,gkp,ab}. The idea is essentially to subtract off a Picard iterate starting from an initial condition $u(t_0)$. The critical bound \eqref{critical} implies good subcritical estimates on the iterate thanks to smoothing from the heat propagator, and one can show inductively using Duhamel's formula that the difference lies in lower integrability spaces including $L_t^\infty L_x^2$. Moreover, the difference satisfies a Navier-Stokes-type equation which leads to estimates that will be useful later.

\begin{prop}\label{decomposition}
Suppose $u$ is a classical solution of \eqref{ns} on $[-T,0]$ with the bound \eqref{critical}. Then for every $T_1\in[0,T/2]$, there exist $\ulin$ and $\unlin$ such that the following hold:
\begin{itemize}
    \item We have the decomposition
    \eqn{
    u=\ulin+\unlin\text{ on }[-T_1,0].
    }
    \item If $d\leq p\leq\infty$ and $j\geq0$, then
    \eq{
    \|\grad^j\ulin\|_{L_t^\infty L_x^p([-T_1,0]\times\Rd)}&\leq A^{O_j(1)}T_1^{-\frac12(1+j-\frac dp)},\label{ulin}\\
    \|P_N\ulin\|_{L_{t,x}^\infty([-T_1,0]\times\Rd)}&\leq A^{O(1)}e^{-T_1N^2/O(1)}T_1^{-\frac12}.\label{PNuflat}
    }
    \item If $1\leq p\leq d$ and $1<q<\infty$, then\footnote{We thank the referee for bringing \eqref{maximalregularity} to our attention which allows a simplification to the argument.}
    \eq{
    \|\unlin\|_{L_t^\infty L_x^p([-T_1,0]\times\Rd)}&\leq A^{O(1)}T_1^{\frac12(\frac dp-1)},\label{unlin}\\
    \|\grad\unlin\|_{L_{t,x}^2([-T_1,0]\times\Rd)}&\leq A^{O(1)}T_1^{\frac d4-\frac 12},\label{sharpenergy}\\
    \|\grad\unlin\|_{L_t^qL_x^\frac d2([-T_1,0]\times\Rd)}&\lesssim_q A^{O(1)}T_1^{\frac1q}.\label{maximalregularity}
    }
    \item $\unlin$ solves
    \eq{\label{sharpequation}
    \dd_t\unlin+\mathbb P\div(\unlin\otimes\unlin+2\ulin\odot\unlin)-\Delta\unlin=f
    }
    where $f$ obeys estimates
    \eq{\label{force}
    \|\grad^jf\|_{L_t^\infty L_x^p([-T_1,0]\times\Rd)}&\leq A^{O_j(1)}T_1^{-\frac12(3+j-\frac dp)}
    }
    for $\frac d2\leq p\leq\infty$ and $j\geq0$.
\end{itemize}
\end{prop}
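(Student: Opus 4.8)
The plan is to take $\ulin$ to be a truncated Picard iteration for \eqref{ns} started at time $-2T_1$ from the datum $u(-2T_1)$; the hypothesis $T_1\leq T/2$ is used precisely so that throughout $[-T_1,0]$ the iteration has been regularized by the heat semigroup for a duration $\gtrsim T_1$. Explicitly, set $B(v,w)(t):=-\int_{-2T_1}^te^{(t-s)\Delta}\mathbb P\div(v\otimes w)(s)\,ds$, let $v_0(t):=e^{(t+2T_1)\Delta}u(-2T_1)$ and $v_{n+1}:=v_0+B(v_n,v_n)$, and define $\ulin:=v_K$, $\unlin:=u-\ulin$ for a fixed $K=K(d)$; taking $K=d-2$ will suffice. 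Since $u$ is a classical solution, Duhamel's formula gives $u=v_0+B(u,u)$ on $[-2T_1,0]$, while $v_0$ solves the heat equation and $\dd_tB(v,w)-\Delta B(v,w)=-\mathbb P\div(v\otimes w)$; subtracting these relations and expanding $u\otimes u$ via $u=\ulin+\unlin$ produces exactly \eqref{sharpequation} with $f=\mathbb P\div(v_{K-1}^{\otimes2}-v_K^{\otimes2})$. Note $v_n(-2T_1)=u(-2T_1)$ for every $n$, so $\unlin$ in fact extends to $[-2T_1,0]$ with $\unlin(-2T_1)=0$, and $v_0,\ulin,\unlin$ are all divergence-free. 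Since every object built here scales like $u$ under the symmetry of \eqref{ns}, the powers of $T_1$ in the asserted estimates are automatic, and the content is finiteness together with $A^{O(1)}$ constants.

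I would next establish the bounds on the ``linear'' part by iterating the heat estimate \eqref{heat} and Lemma \ref{bernstein}. For $v_0$, \eqref{ulin} and \eqref{PNuflat} are immediate from \eqref{heat}, \eqref{critical} and $t+2T_1\sim T_1$. For the multilinear terms $B(v_0,v_0),B(B(v_0,v_0),v_0),\dots$ making up $v_K-v_0$, one proves by induction on the multilinear order $m$ the pointwise bound $\|\grad^j\Psi(t)\|_{L_x^p}\lesssim_jA^{O_j(m)}(t+2T_1)^{-\frac12(1+j-\frac dp)}$ for $p\geq d$; the inductive step splits $\int_{-2T_1}^t=\int_{-2T_1}^{t-\delta}+\int_{t-\delta}^t$ with $\delta\sim t+2T_1$, placing all derivatives on the heat kernel on the first piece and on the (then smooth) factors on the second, so that each resulting Beta-type integral reproduces precisely the stated power of $t+2T_1$. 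Restricting to $[-T_1,0]$ and using $m\leq2^K=O(1)$ gives \eqref{ulin}, which with the Leibniz rule, H\"older's inequality, and the smoothing properties of $v_0$ also yields \eqref{force}. For \eqref{PNuflat} the key point is that each $v_0$ occurring in $\ulin$ is $e^{(\cdot+2T_1)\Delta}$ applied to $u(-2T_1)\in L_x^d$, so its Fourier transform decays like $e^{-cT_1|\xi|^2}$; products and the operators $B$ (which only further localize in frequency via the heat kernel) retain an $e^{-cT_1N^2}$ gain on the frequency-$N$ component, and feeding this into Lemma \ref{bernstein} produces \eqref{PNuflat}. The bound \eqref{unlin} comes instead from the complementary iteration: with $w_n:=u-v_n$, bilinearity of $B$ gives $w_0=B(u,u)$ and $w_n=B(w_{n-1},u)+B(v_{n-1},w_{n-1})$, and since $B$ applied to a pair with one factor in $L_x^{d/(n+1)}$ and one in $L_x^{[d,\infty]}$ gains (just under) one unit of integrability exponent, one obtains inductively $w_n\in L_t^\infty L_x^p$ for $p\in[\frac d{n+2},d]$ with norm $\lesssim A^{O(n)}T_1^{\frac12(\frac dp-1)}$; taking $K=d-2$ brings the left endpoint down to $p=1$, which is \eqref{unlin}.

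Finally, I would derive \eqref{sharpenergy} and \eqref{maximalregularity} from \emph{linear} parabolic estimates for the heat equation $\dd_t\unlin-\Delta\unlin=\mathbb P\div H$, where $H:=\unlin^{\otimes2}+2\ulin\odot\unlin-(v_{K-1}^{\otimes2}-v_K^{\otimes2})$, once \eqref{ulin} and \eqref{unlin} are available. The first two summands of $H$ lie in $L_t^\infty L_x^{d/2}$ with norm $\lesssim A^{O(1)}$ (from $\|\unlin\|_{L_t^\infty L_x^d}+\|\ulin\|_{L_t^\infty L_x^d}\lesssim A^{O(1)}$) and in $L_t^\infty L_x^2$ with norm $\lesssim A^{O(1)}T_1^{\frac d4-1}$ (from $\|\unlin\|_{L_t^\infty L_x^4}\lesssim A^{O(1)}T_1^{\frac d8-\frac12}$ and $\|\ulin\|_{L_t^\infty L_x^d}\,\|\unlin\|_{L_t^\infty L_x^{2d/(d-2)}}\lesssim A^{O(1)}T_1^{\frac d4-1}$, both legitimate as $d\geq4$). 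For the last summand I would use the difference structure together with the size of $K$: $v_{K-1}-v_K$ is a sum of multilinear terms in $v_0$ of order $\geq K+1=d-1\geq d/2$, and an order-$m$ such term lies in $L_x^2$ whenever $m\geq d/2$, so $v_{K-1}^{\otimes2}-v_K^{\otimes2}=(v_{K-1}-v_K)\odot(v_{K-1}+v_K)\in L_t^\infty L_x^2$ with norm $\lesssim A^{O(1)}T_1^{\frac d4-1}$ (bounding $v_{K-1}+v_K$ in $L_x^\infty$ via \eqref{ulin}) and, similarly, in $L_t^\infty L_x^{d/2}$ with norm $\lesssim A^{O(1)}$. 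Treating $\unlin$ as the mild solution of this linear heat equation, maximal $L^2$ regularity (equivalently the heat energy identity) on $[-T_1,0]$ together with $\|\unlin(-T_1)\|_{L_x^2}\lesssim A^{O(1)}T_1^{\frac d4-\frac12}$ from \eqref{unlin} gives $\|\grad\unlin\|_{L_{t,x}^2}\lesssim\|H\|_{L_{t,x}^2([-T_1,0])}+\|\unlin(-T_1)\|_{L_x^2}\lesssim A^{O(1)}T_1^{\frac d4-\frac12}$, which is \eqref{sharpenergy}; and maximal $L_t^qL_x^{d/2}$ regularity on $[-2T_1,0]$, where $\unlin$ has zero initial data, gives $\|\grad\unlin\|_{L_t^qL_x^{d/2}([-2T_1,0])}\lesssim_q\|H\|_{L_t^qL_x^{d/2}}\lesssim_qA^{O(1)}T_1^{1/q}$, which restricted to $[-T_1,0]$ is \eqref{maximalregularity}.

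I expect the last step to be the main obstacle, namely obtaining \emph{polynomial} rather than exponential dependence on $A$ in \eqref{sharpenergy} and \eqref{maximalregularity}. A naive energy estimate on \eqref{sharpequation} bounds the cross term by $\|\ulin\|_{L_x^\infty}^2\|\unlin\|_{L_x^2}^2$, whereupon Gr\"onwall's inequality costs a factor $\exp\bigl(\int_{-T_1}^0\|\ulin\|_{L_x^\infty}^2\,dt\bigr)=\exp(A^{O(1)})$, which is far too large. Eliminating this loss is exactly what forces one through the Picard bookkeeping above: one needs the genuine integrability gains of the iteration so that $\ulin\odot\unlin$ and the forcing amplitude land in $L_x^2$ with a \emph{favorable} power of $T_1$, and it is for the forcing that the difference structure $v_{K-1}^{\otimes2}-v_K^{\otimes2}$ and the choice $K=d-2$ are indispensable.
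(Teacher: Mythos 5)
Your proposal is correct and follows essentially the same route as the paper: subtract a finite Picard iterate started from $u(-2T_1)$ (the paper staggers the restart times $\tau_n$ rather than splitting the Duhamel integral at a single start time, but this is only bookkeeping), prove \eqref{ulin}--\eqref{PNuflat} by induction with heat smoothing, obtain \eqref{unlin} from the complementary iteration for $u-v_n$, identify $f$ as the difference of consecutive nonlinearities, and get \eqref{sharpenergy} and \eqref{maximalregularity} from an energy estimate and maximal regularity. Your only deviation is cosmetic: for \eqref{sharpenergy} you treat the full nonlinearity as a known $L_{t,x}^2$ divergence-form forcing, whereas the paper runs the nonlinear energy identity and bounds the cross term with $\|\grad\ulin\|_{L_{t,x}^\infty}$ and the already-established $\|\unlin\|_{L_t^\infty L_x^2}$ bound; both avoid the Gr\"onwall loss for the same reason, namely that \eqref{unlin} is proved before the energy step.
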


\begin{proof}

Starting with
\eqn{
\ulin_0:=0,\quad\unlin_0:=u,
}
we inductively define for $n\geq1$
\eqn{
\ulin_n(t)&:=e^{(t-\tau_{n-1})\Delta}u_{n-1}(\tau_{n-1})-\int_{\tau_{n-1}}^te^{(t-t')\Delta}\mathbb P\div\ulin_{n-1}\otimes\ulin_{n-1}(t')dt',\\
\unlin_n(t)&:=-\int_{\tau_{n-1}}^te^{(t-t')\Delta}\mathbb P\div(u\otimes u-\ulin_{n-1}\otimes\ulin_{n-1})(t')dt'
}
where we have chosen a sequence of $O(1)$-many times $-2T_1<\tau_1<\tau_2<\cdots<-T_1$ such that $\tau_i-\tau_{i+1}=T_1/O(1)$. We prove \eqref{ulin} on the shrinking time intervals $[\tau_n,0]$ with $\ulin$ replaced by $\ulin_n$ by induction on $n$. For $n=0$ it is trivial. Suppose the claim for some $n-1\geq0$. Then, for $t\in[\tau_n,0]$,
\eqn{
\|\grad^j\ulin_n(t)\|_{L_x^p(\Rd)}&\lesssim(t-\tau_{n-1})^{-\frac12(1+j-\frac dp)}A\\
&\quad+\int_{\tau_{n-1}}^t(t-t')^{-\frac12}\|\ulin_{n-1}(t')\|_{L_{x}^p(\Rd)}\|\grad^j\ulin_{n-1}(t')\|_{L_{x}^\infty(\Rd)}dt'
}
which gives the desired bound. Then \eqref{PNuflat} follows similarly by induction using Duhamel's principle and a paraproduct decomposition.

Next, it is convenient to decompose $\unlin_n=\unlinI_n+\unlinII_n$ where
\eqn{
\unlinI(t)&:=-2\int_{\tau_{n-1}}^te^{(t-t')\Delta}\mathbb P\div\ulin_{n-1}\odot\unlin_{n-1}(t')dt',\\
\unlinII(t)&:=-\int_{\tau_{n-1}}^te^{(t-t')\Delta}\mathbb P\div\unlin_{n-1}\otimes\unlin_{n-1}(t')dt'.
}
We claim the bound in \eqref{unlin} for $\unlin_n$, specifically in the range $\max(\frac d{n+1},1)\leq p<d$ and on the time interval $[\tau_n,0]$. Thus we will obtain the desired result by taking $n$ large depending on $d$. Note that the $p=d$ case is immediate from \eqref{critical} and \eqref{ulin}. As a base case, we consider $n=1$ for which $\unlinI_1=0$. For $\unlinII_1$,
\eqn{
\|\unlinII_1(t)\|_{L_x^p(\Rd)}&\lesssim\int_{\tau_{n-1}}^t(t-t')^{-\frac12(3-\frac dp)}\|u(t')\|_{L_x^d(\Rd)}^2dt'
}
which yields the desired result using \eqref{critical} assuming $\frac d2\leq p<d$. Now assume the desired inequality for some $n-1\geq1$. Then
\eqn{
\|\unlinI(t)\|_{L_x^p(\Rd)}&\lesssim\int_{\tau_{n
-1}}^t(t-t')^{-\frac12(1+\frac ds+\frac dr-\frac dp)}\|\ulin_{n-1}(t')\|_{L_x^s(\Rd)}\|\unlin_{n-1}(t')\|_{L_x^r(\Rd)}dt',
}
assuming $\frac1p\leq\frac1s+\frac1r$. This is integrable in time, and furthermore we can apply \eqref{ulin} and \eqref{unlin}, by taking $r=\frac{dp}{d-p}$ and $s=d$, and assuming additionally that $\max(\frac d{n+1},1)\leq p<\frac d2$. If instead we take $r=\frac d2$ and $\frac1s=\max(\frac1p-\frac2d,0)$, we obtain the same result but instead for $\frac d3\leq p<d$. Combining these, we have the full range of $p$. Next we consider $\unlinII$. With $\frac1r=\frac12(\frac1d+\frac1p)-\epsilon$,
\eqn{
\|\unlinII_n(t)\|_{L_x^p(\Rd)}&\lesssim\int_{\tau_{n-1}}^t(t-t')^{-1+\epsilon d}\|\unlin_{n-1}(t')\|_{L_x^r(\Rd)}^2dt'
}
implies the desired bound upon taking $\epsilon$ sufficiently small depending on $p$ and $d$. \eqref{ulin}-\eqref{sharpenergy} therefore hold upon setting $\ulin,\unlin:=\ulin_d,\unlin_d$.

One readily computes \eqref{sharpequation} with $f=\mathbb P\div(\ulin_{d-1}\otimes\ulin_{d-1}-\ulin_d\otimes\ulin_d)$. Then \eqref{force} follows by H\"older's inequality and \eqref{ulin}. Multiplying \eqref{sharpequation} by $\unlin$ and integrating over $\Rd$, we have
\eqn{
\frac d{dt}\int_{\Rd}\frac{|\unlin|^2}{2}dx&=-\int_{\Rd}|\grad\unlin|^2-\int_{\Rd}\unlin\cdot(\unlin\cdot\grad\ulin)+\unlin\cdot f
}
and therefore we can apply \eqref{unlin}, \eqref{ulin}, and \eqref{force} to find
\eqn{
\|\grad\unlin\|_{L_{t,x}^2([-T_1,0]\times\Rd)}&\lesssim\|\unlin\|_{L_t^\infty L_x^2([-T_1,0]\times\Rd)}+\|\unlin\|_{L_t^\infty L_x^2([-T_1,0]\times\Rd)}\|\grad\ulin\|_{L_{t,x}^\infty([-T_1,0]\times\Rd)}\\
&\quad+\|\unlin\|_{L_t^\infty L_x^1([-T_1,0]\times\Rd)}\|f\|_{L_{t,x}^\infty([-T_1,0]\times\Rd)}
}
which proves \eqref{sharpenergy}.

Finally, we note that
\eqn{
(\dd_t-\Delta)\grad\unlin=-\grad\mathbb P\div(u\otimes u-\ulin_{d-1}\otimes\ulin_{d-1}).
}
Thus by \eqref{heat} and maximal regularity for the heat equation,
\eqn{
\|\grad\unlin\|_{L_t^qL_x^\frac d2([-T_1,0]\times\mathbb R^d)}\lesssim_q T_1^{-\frac12+\frac1q}\|\unlin(-T_1)\|_{L_x^\frac d2(\Rd)}+\|u\otimes u-\ulin_{d-1}\otimes\ulin_{d-1}\|_{L_t^qL_x^\frac d2([-T_1,0]\times\Rd)}.
}
We conclude \eqref{maximalregularity} by \eqref{ulin}, \eqref{critical}, and H\"older's inequality.

\end{proof}

\section{Tools for controlling spacetime concentrations}\label{tools}

\subsection{Local energy estimates}

We will make use of three slightly different consequences of the local energy equality for \eqref{ns}. The second is an extension of Lemma 2.2 in \cite{dongdu}, now with the dependence on $A$ made explicit. (See below for the definitions of $C$ and $D$.)

\begin{lem}
Let $u$ be a smooth solution of \eqref{ns} satisfying \eqref{critical} on $[-T,0]$, $r>0$, and $I\subset[-T,0]$. Then we have
\eq{\label{firstlocalenergy}
\sup_I\int_{B(r)}|u|^2dx+\int_I\int_{B(r)}|\grad u|^2dxdt\leq A^{O(1)}r^{d-4}|I|,
}
\eq{\label{localCD}
\|u\|_{L_t^\infty L_x^2(Q(z_0,r/2))}+\|\grad u\|_{L_{t,x}^2(Q(z_0,r/2))}\lesssim r^{\frac d2-1}(C(r,z_0)+D(r,z_0))A^{1/2},
}
and, for $t_0\leq t\leq t_0+10r^2$,
\begin{equation}\begin{aligned}\label{energychange}
&\int_{Q(z_0,r/2)}\frac{|u(t)|^2}2dx-\int_{Q(z_0,r/2)}\frac{|u(t_0)|^2}2dx\\
&\quad\quad\lesssim\|\grad u\|_{L_{t,x}^2(Q(z_0,r))}A^2r^{\frac d2-1}+D(Q(r,z_0))Ar^{d-2}.
\end{aligned}\end{equation}
\end{lem}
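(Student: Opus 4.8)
The plan is to derive all three estimates from the local energy equality for the Navier-Stokes system, testing against an appropriate spatial (or spacetime) cutoff and using the critical bound \eqref{critical} together with the pressure representation $p=-\Delta^{-1}\div\div(u\otimes u)$. Recall that multiplying \eqref{ns} by $u\phi^2$ for a cutoff $\phi$ and integrating yields the familiar identity
\eqn{
\dd_t\int\frac{|u|^2}2\phi^2+\int|\grad u|^2\phi^2=\int\frac{|u|^2}2(\dd_t+\Delta)\phi^2+\int\left(\frac{|u|^2}2+p\right)u\cdot\grad\phi^2,
}
so the work in each case is to choose $\phi$ and estimate the right-hand side.

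For \eqref{firstlocalenergy}, I would take $\phi$ a time-independent bump equal to $1$ on $B(r)$ supported in $B(2r)$ with $|\grad\phi|\lesssim r^{-1}$, $|\grad^2\phi|\lesssim r^{-2}$, integrate in time over a subinterval of $I$ ending at the supremizing time, and bound the error terms by Hölder using $\|u\|_{L_x^d}\leq A$: the $\Delta\phi^2$ term gives $r^{-2}\|u\|_{L_x^d}^2|B(2r)|^{1-2/d}|I|\sim A^2 r^{d-4}|I|$, and the cubic and pressure terms are handled by Hölder in $L_x^d\times L_x^d\times L_x^{d/2}$ (or $L_x^{d/3}$ for the cubic term with $d\geq4$ so that $3/d\le 1$), using the Calderón-Zygmund bound $\|p\|_{L_x^{d/2}}\lesssim\|u\|_{L_x^d}^2\lesssim A^2$; each such term also scales like $A^3 r^{d-3}\cdot r^{-1}|I|$, which is $\le A^{O(1)}r^{d-4}|I|$ after enlarging $A$ and using $|I|\le T$ to absorb extra powers — more carefully one keeps the global-in-$A$ bound and notes all terms are bounded by $A^{O(1)}r^{d-4}|I|$. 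Taking the sup over the choice of endpoint inside $I$ gives both the $L_t^\infty L_x^2$ and the $L_{t,x}^2$ gradient pieces. Estimate \eqref{localCD} is the same computation localized to a parabolic cylinder $Q(z_0,r)$ with a spacetime cutoff supported in $Q(z_0,r)$ and equal to $1$ on $Q(z_0,r/2)$; here one expresses the right-hand side in terms of the scale-invariant quantities $C(r,z_0)$ (the scaled $L_t^3 L_x^3$ norm of $u$, say) and $D(r,z_0)$ (the scaled pressure norm) exactly as in Lemma 2.2 of \cite{dongdu}, and one extra power of $A^{1/2}$ appears because one factor of $u$ in the cubic term and the pressure are estimated by \eqref{critical} and Calderón-Zygmund rather than by $C,D$. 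For \eqref{energychange} one runs the same localized energy identity but now \emph{does not} integrate the sharp gradient term to the left with a sign to spare; instead one integrates from $t_0$ to $t$ and directly estimates: the $\Delta\phi^2$ term contributes $r^{-2}\|u\|_{L_{t,x}^2(Q(z_0,r))}^2\lesssim \|\grad u\|_{L_{t,x}^2(Q(z_0,r))}^2+\ldots$ — actually here one keeps $\|u\|_{L_t^\infty L_x^2(Q(z_0,r))}^2$ bounded by $A^2 r^{d-2}$ via \eqref{firstlocalenergy} — while the cubic term is bounded by $\|\grad u\|_{L_{t,x}^2(Q(z_0,r))}\|u\|_{L_{t,x}^\infty}^{?}\ldots$; more precisely one writes the cubic term as $\int|u|^2 u\cdot\grad\phi^2\lesssim r^{-1}\|u\|_{L_{t,x}^2(Q)}\|u\|_{L_t^\infty L_x^d}\|u\|_{L_t^? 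L_x^?}$ and interpolates, and the pressure term as $D(Q(r,z_0))Ar^{d-2}$ after pulling out one power of $\|u\|_{L_x^d}\le A$.

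The main obstacle, and the place requiring care, is the cubic term $\int\frac{|u|^2}2u\cdot\grad\phi^2$ in \eqref{energychange}: in three dimensions one would interpolate $L_x^3$ between $L_x^2$ and $L_x^6$ and use Gagliardo-Nirenberg, but here the natural control is $L_x^d$ with $d\geq4$, so one must instead peel off one factor in $L_x^d$ (costing $A$), one factor in $L_t^\infty L_x^2$ (costing $A r^{d/2-1}$ via \eqref{firstlocalenergy}), and the remaining factor in a space controlled by $\|\grad u\|_{L_{t,x}^2(Q(z_0,r))}$ — concretely using Gagliardo-Nirenberg/Sobolev on $B(r)$ with the cutoff to get $\|u\phi\|_{L_t^2 L_x^{2d/(d-2)}}\lesssim \|\grad u\|_{L_{t,x}^2(Q(z_0,r))}+r^{-1}\|u\|_{L_{t,x}^2(Q(z_0,r))}$, and then Hölder in the remaining exponents, checking $\frac1d+\frac12\cdot\frac{d-2}d+\frac12\le 1$ for $d\ge 4$ balances. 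This produces the claimed shape $\|\grad u\|_{L_{t,x}^2(Q(z_0,r))}A^2 r^{d/2-1}$ (the $A^2$ absorbing the $L_x^d$ factor times the $L_t^\infty L_x^2$ factor's $A$, plus slack) after using $|t-t_0|\lesssim r^2$ to convert the $r^{-1}\|u\|_{L_{t,x}^2}$ piece, bounded by $A r^{d/2}$, into something $\lesssim A^2 r^{d/2-1}$ as well. The pressure term is comparatively routine: write $p\,u\cdot\grad\phi^2$, bound by $r^{-1}\|p\|_{L_{t,x}^?(Q)}\|u\|_{L_t^\infty L_x^d}\cdots$ and package the pressure factor into $D(Q(r,z_0))$ with the explicit $A r^{d-2}$ prefactor coming from the single $\|u\|_{L_x^d}\le A$ and the volume factor $r^{d-2}$ from the cutoff's support.
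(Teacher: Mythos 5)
Your overall route (the local energy identity tested against cutoffs, plus H\"older, \eqref{critical}, and Calder\'on--Zygmund for the pressure) is exactly the paper's, and your treatments of \eqref{firstlocalenergy} and \eqref{localCD} are essentially fine up to bookkeeping (note that $C$ is the scaled $L_{t,x}^{2(d+3)/(d+1)}$ norm, not $L^3_{t,x}$, and that $D$ is defined as a \emph{square root}, so the pressure term naturally produces $D^2$, which one converts to $A^{O(1)}D$ using $D\lesssim A^{O(1)}$; also both the sup term in \eqref{firstlocalenergy} and the initial-data term $\int|u(a)|^2\phi^2\lesssim A^2r^{d-2}$ are only consistent with the stated right-hand side in the regime $|I|\gtrsim r^2$ in which the lemma is actually applied).

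The genuine gap is in \eqref{energychange}. The whole point of that estimate is that \emph{every} error term carries a factor of $\|\grad u\|_{L_{t,x}^2(Q(z_0,r))}$ or $D(Q(r,z_0))$, because it is applied (in Proposition \ref{prop1}) precisely when these are small, to transfer smallness of $\int_{B}|u(\tau_0)|^2$ at a pigeonholed time to all later times in the cylinder. Your handling of the $\Delta\phi^2$ term (``keep $\|u\|_{L_t^\infty L_x^2}^2\lesssim A^2r^{d-2}$'') and of the lower-order piece of the cubic term (the $r^{-1}\|u\|_{L^2_{t,x}}$ part of Gagliardo--Nirenberg, which you bound by a standalone quantity of size $A^{O(1)}r^{d-2}$, and which is not dimensionally of the form $A^2r^{d/2-1}$ in any case) produces additive errors with no smallness factor; these are \emph{not} majorized by the stated right-hand side when $\|\grad u\|_{L^2(Q)}$ and $D(Q)$ are tiny, and they would reinstate exactly the trivial bound $\|u\|_{L^\infty_tL^2_x}\lesssim Ar^{d/2-1}$ that the lemma is designed to beat (feeding that into the proof of Proposition \ref{prop1} gives $C(3Q_0/4)\gtrsim A$, killing the argument). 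The missing idea is to exploit the null structure before estimating: since $\int_{\Rd}\Delta(\phi^2)\,dx=0$ and $\int_{\Rd}u\cdot\grad(\phi^2)\,dx=0$ by incompressibility (equivalently, integrate by parts so the derivative lands on $|u|^2$), one has
\eqn{
\Big|\int|u|^2\Delta\phi^2\,dx\Big|+\Big|\int\tfrac{|u|^2}2\,u\cdot\grad\phi^2\,dx\Big|
\lesssim r^{-1}\big(\|u\|_{L^2(B(r))}+\|u\|_{L^d}\|u\phi\|_{L^{\frac{2d}{d-2}}}\big)\|\grad u\|_{L^2(B(r))},
}
and after Sobolev on $u\phi$, H\"older $\|u\|_{L^2(B(r))}\lesssim Ar^{\frac d2-1}$, and Cauchy--Schwarz in time over $|t-t_0|\lesssim r^2$, every term comes out as $\lesssim A^{O(1)}r^{\frac d2-1}\|\grad u\|_{L^2_{t,x}(Q(z_0,r))}$, as required; the pressure term is then paired with one factor of $u$ in $L^d_x$ (costing $A$) and packaged into $D(Q(r,z_0))$ with the volume/time factors giving $r^{d-2}$.
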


\begin{proof}
All three estimates are elementary applications of the local energy equality
\eqn{
\frac d{dt}\int_\Rd\frac{|u|^2}2\psi dx+\int_\Rd|\grad u|^2\psi dx=\int_\Rd\frac{|u|^2}2(\dd_t\psi+\Delta\psi+u\cdot\grad\psi)+pu\cdot\grad\psi dx,
}
along with H\"older's inequality, \eqref{critical}, integration by parts, and the Calder\'on-Zygmund estimate for the pressure.
\end{proof}

\subsection{Propagation from small to large scales}\label{smalltolarge}

Define the local scale-invariant quantities
\eqn{
C(R,z_0):=R^{-\frac d2-\frac1{d+3}+1}\|u\|_{L_{t,x}^{2(d+3)/(d+1)}(Q(z_0,R))}
}
and
\eqn{
D(R,z_0):=R^{-\frac d2-\frac1{d+3}+1}\|p\|_{L_{t,x}^{(d+3)/(d+1)}(Q(z_0,R))}^{1/2}.
}
For brevity, if $Q=Q(z_0,R)$, we write $C(Q)$ in place of $C(R,z_0)$. These quantities appear in \cite{dongdu}, although here we have defined them slightly differently so they are proportional to the norm of $u$. The following proposition is closely related to Proposition 3.1 in \cite{dongdu}. Their method of proof is by contradiction and uses a compactness argument to find suitable values $\eta$ and $\epsilon$. Thus such an approach does not give any information on how they depend on $A$; see also the comments in footnote \ref{ddfootnote}.

\begin{prop}\label{nested}
Let $u$ be a smooth solution of \eqref{ns} satisfying \eqref{critical}. Then for any $\epsilon\leq A^{-d^3}$, if $z_0\in[-T/2,0]\times\Rd$, $\rho\leq T/4$, and
\eqn{
C(\rho,z_0)+D(\rho,z_0)\leq\epsilon,
}
then
\eqn{
C(r,z_1)+D(r,z_1)\leq \epsilon A^{O(1)}
}
for any $z_1\in Q(z_0,\rho/2)$ and $r\in(0,\rho/2)$.
\end{prop}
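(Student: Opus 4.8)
The plan is to iterate a single-step improvement: show that a bound $C(\rho,z_0)+D(\rho,z_0)\leq\epsilon$ at scale $\rho$ around $z_0$ propagates to a bound of the form $C(\theta\rho,z_1)+D(\theta\rho,z_1)\leq\epsilon A^{O(1)}$ at a \emph{larger} scale $\theta\rho$ (with $\theta\in(1/2,1)$ a dimensional constant) around any nearby point $z_1$, and then compose these steps $O(\log(\rho/r))$ times to reach the scale $r$. Wait — since we are going from \emph{small to large} scales, the direction is: starting from smallness at the small scale $r$ one wants smallness at $\rho$; but the statement is phrased the other way. Re-reading: we are \emph{given} smallness at the large scale $\rho$ and want it at every smaller scale $r<\rho/2$. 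So in fact the single-step lemma should say: if $C+D\leq\epsilon$ on $Q(z_0,\rho)$, then $C+D\leq\epsilon A^{O(1)}$ on $Q(z_1,\rho/2)$ for $z_1\in Q(z_0,\rho/2)$. Iterating this $k$ times from the hypothesis at scale $\rho$ gives control at scale $\rho/2^k$ around points in a shrinking nest of cylinders, and the constant accumulates as $(A^{O(1)})^k$. The key point — and this is where the restriction $\epsilon\leq A^{-d^3}$ enters — is that the single-step constant $A^{O(1)}$ must be \emph{independent of $k$}, so that after $k\sim\log_2(\rho/r)$ steps one does not get $A^{O(k)}$. To achieve this, the iteration must be set up so that the smallness parameter does not degrade from step to step: the natural scheme is to prove $C(\rho/2,z_1)+D(\rho/2,z_1)\leq\tfrac12\max(C(\rho,z_0)+D(\rho,z_0),\,A^{-d^3/2})$ or similar, so that once the quantity drops below a fixed threshold $\sim A^{-O(1)}$ it stays there, and the final constant is a fixed power of $A$ rather than an exponentially growing one.

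**Single-step estimate.** The heart of the matter is the one-scale improvement, which is the quantitative analogue of Dong--Du's Proposition 3.1. Here is how I would carry it out. Split $u$ on $Q(z_0,\rho)$ using the sharp--flat decomposition of Proposition \ref{decomposition} with $T_1\sim\rho^2$: $u=\ulin+\unlin$. The flat part $\ulin$ has good subcritical bounds \eqref{ulin}, so $C$ and $D$ contributions from $\ulin$ are bounded by $A^{O(1)}$ outright (not small, but that is fine since we only want $\epsilon A^{O(1)}$), or better yet one controls them in terms of $\rho$ by a fixed power. The nonlinear part $\unlin$ lies in the energy space, so one can run a genuine local energy argument on it: use \eqref{firstlocalenergy}, \eqref{localCD}, and the equation \eqref{sharpequation} together with the local energy inequality to bound $\|u\|_{L^2_tL^2_x}$ and $\|\grad u\|_{L^2_{t,x}}$ on the smaller cylinder $Q(z_1,\rho/2)$ in terms of the $C+D$ data on $Q(z_0,\rho)$. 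Then re-derive $C(\rho/2,z_1)$ by Sobolev/Gagliardo--Nirenberg embedding from these energy quantities — the exponent $2(d+3)/(d+1)$ in the definition of $C$ is exactly the one that comes out of interpolating $L^\infty_tL^2_x\cap L^2_tL^{2d/(d-2)}_x$ on a parabolic cylinder in $\mathbb R^{d+1}$ — and $D(\rho/2,z_1)$ from the Calderón--Zygmund estimate for the pressure $p=-\Delta^{-1}\div\div(u\otimes u)$, splitting the pressure into a local piece (estimated by the local norm of $u\otimes u$) and a far piece (a harmonic function on $Q(z_1,\rho/2)$, hence controlled on the smaller cylinder by its average, which pulls in the hypothesis at scale $\rho$). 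The nonlinear terms in the energy inequality — $\int |u|^2 u\cdot\grad\psi$ and $\int p\, u\cdot\grad\psi$ — are the ones that produce the cubic/higher self-interaction; these are handled by Hölder in the local norms and absorbed using the smallness $\epsilon\leq A^{-d^3}$, which guarantees the nonlinearity is a genuine perturbation rather than dominating. The point of inserting the decomposition is precisely that without it $u\notin L^2_x$ when $d\geq5$ and none of these energy manipulations make sense.

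**Iteration and the main obstacle.** Having the single-step lemma with an $A^{O(1)}$ constant that is \emph{uniform in the scale}, I would iterate: given the hypothesis at $(\rho,z_0)$ and a target $(r,z_1)$ with $z_1\in Q(z_0,\rho/2)$ and $r<\rho/2$, pick $k$ with $\rho/2^{k+1}\leq r<\rho/2^k$, and apply the single-step lemma $k+O(1)$ times along a chain of cylinders $Q(z_0,\rho)\supset Q(w_1,\rho/2)\supset Q(w_2,\rho/4)\supset\cdots$ all containing $z_1$ (possible since $z_1\in Q(z_0,\rho/2)$ forces $z_1$ to lie in a nested sequence of half-scale cylinders), finishing with a trivial inclusion $Q(z_1,r)\subset Q(w_k,\rho/2^k)$ and monotonicity of $C,D$ under such inclusions (up to the scaling factor, which is why $C,D$ were defined scale-invariantly). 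Because each step contributes a \emph{fixed} factor $A^{O(1)}$ \emph{only if the quantity is still above the threshold} $A^{-O(1)}$, and otherwise merely reproduces the threshold, the composite bound is $\max(\epsilon,A^{-O(1)})\cdot A^{O(1)}=\epsilon A^{O(1)}$ rather than $\epsilon A^{O(k)}$. The main obstacle is engineering the single-step lemma so that the gain structure is multiplicative-contractive rather than additive: one needs the energy estimate on $Q(z_1,\rho/2)$ to reproduce $C+D$ with a constant strictly controlled by the input $C+D$ plus a universal $A^{O(1)}$ floor, \emph{and} one must verify that the pressure's nonlocal tail, when restricted from $Q(z_0,\rho)$ to $Q(z_1,\rho/2)$, genuinely contracts (this is the harmonic-function interior estimate) rather than merely staying bounded — otherwise the tail, iterated $k$ times, would blow up. Getting the bookkeeping of these two effects — local energy contraction and nonlocal pressure tail contraction — to line up under a single smallness hypothesis $\epsilon\leq A^{-d^3}$ is the delicate part; everything else is Hölder, Sobolev, and Calderón--Zygmund.
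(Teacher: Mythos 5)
There is a genuine gap, and it concerns the one feature of the statement that carries all the content: the conclusion must be \emph{proportional to} $\epsilon$. Since $\epsilon$ is allowed to be arbitrarily small (and in the later applications it is taken of size $A_1^{-4d}$, $A_4^{-1/2}$, etc., far below any fixed power $A^{-O(1)}$), a bound of the form $\max(\epsilon,A^{-O(1)})\cdot A^{O(1)}$ is strictly weaker than $\epsilon A^{O(1)}$; your closing identity $\max(\epsilon,A^{-O(1)})\cdot A^{O(1)}=\epsilon A^{O(1)}$ is simply false once $\epsilon$ drops below the floor, so the ``contractive-with-threshold'' iteration cannot deliver the proposition. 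The same loss of proportionality occurs already inside your single-step estimate: you propose to bound the contribution of $\ulin$ ``outright by $A^{O(1)}$, which is fine since we only want $\epsilon A^{O(1)}$'' --- but $\epsilon A^{O(1)}$ is itself a smallness statement, and the flat part contributes roughly $A^{O(1)}(r/\rho)$ to $C(r,z_1)$ (e.g.\ of size $A^{O(1)}$ at $r=\rho/4$), which is not $O(\epsilon A^{O(1)})$ for tiny $\epsilon$. Every term in the conclusion has to be traced back to the hypothesis $C(\rho,z_0)+D(\rho,z_0)\leq\epsilon$; the sharp--flat decomposition of Proposition \ref{decomposition} is blind to that hypothesis (its bounds involve only $A$ and $T_1$), so it cannot be the engine of this particular lemma.

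The paper's route is structured differently precisely to preserve $\epsilon$ exactly. Its Lemma \ref{step} goes from scale $\rho$ to scale $\eta\rho$ with the large dilation $\eta=A^{-1}$ and \emph{no loss at all} ($\epsilon\mapsto\epsilon$), so that the unbounded number of iterations down to $r$ costs nothing; the single $A^{O(1)}$ in the conclusion comes only once, from comparing the last cylinder (of size between $\eta\rho$ and $\rho$, recentered at $z_1$) with $Q(z_0,\rho)$ via the scale factors in $C,D$. The lossless step is obtained not by an energy argument on $\unlin$ but by: the local energy bound \eqref{localCD}, which is proportional to $C+D\leq\epsilon$; a Littlewood--Paley splitting at the $\epsilon$-dependent frequency $N=\epsilon^{-1/d}$, so that the Bernstein/interpolation gain $N^{-1/(d+3)}$ (and the quadratic gains $\epsilon^2$ in the Duhamel nonlinearity and pressure paraproducts) defeats the unfavorable scale factor $\eta^{-O(1)}=A^{O(1)}$ --- this is exactly where $\epsilon\leq A^{-d^3}$ is used, rather than to absorb a nonlinearity in a local energy inequality; and a local Duhamel representation for the low frequencies whose ``global'' remainder solves the heat equation (resp.\ is harmonic, for the pressure) locally and is therefore controlled on the small cylinder by the hypothesis at scale $\rho$. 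Your instinct about the harmonic tail of the pressure matches the paper, but without the $\epsilon$-proportional bookkeeping and the lossless step, the proposed argument does not prove the stated estimate.
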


As in \cite{dongdu}, Proposition \ref{nested} is obtained by iteratively applying Lemma \ref{step} below. The point is that given a lower bound $C(r,z_1)+D(r,z_1)>\epsilon$ in a small cylinder, the lemma implies the same lower bound in a cylinder dilated by a factor of $A$. This step can be iterated until it yields a cylinder $Q'$ that is comparable in length to $Q(z_0,\rho)$, the ratio depending on $A$. Since $Q'$ can be smaller than $Q(z_0,\rho)$, the scaling factors in the definition of $C$ and $D$ lead to the loss of $A^{O(1)}$.

\begin{lem}\label{step}
Let $\epsilon$, $u$, $\rho$, and $z_0$ be as in Proposition \ref{nested}. Then, with $\eta=A^{-1}$,
\eq{\label{start}
C(\rho,z_0)+D(\rho,z_0)\leq\epsilon,
}
implies
\eq{\label{finish}
C(\eta\rho,z_0)+D(\eta\rho,z_0)\leq\epsilon.
}
\end{lem}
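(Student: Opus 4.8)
The plan is to run the Caffarelli--Kohn--Nirenberg $\epsilon$-regularity iteration in the form used by Dong--Du (Proposition 3.1 of \cite{dongdu}), but keeping every constant explicit so that the step loses nothing in $\epsilon$. Using the scaling $u(t,x)\mapsto\lambda u(\lambda^2t,\lambda x)$, under which $C$, $D$ and the bound \eqref{critical} are all invariant, I may assume $\rho=1$ and $z_0=0$. I will in fact show $C(r)+D(r)\le\epsilon$ for every $r\in(0,1/2]$ (which contains $r=\eta$), and I will get this by iterating a \emph{fixed-ratio} version of the step $O(\log A)$ times, at ratio $\theta_0=\theta_0(d)$ chosen small enough that a certain dimensional constant is defeated.

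There are two ingredients. First, a pressure decomposition: at radius $R$ write $p=p_1^{(R)}+p_2^{(R)}$, where $p_1^{(R)}=-\Delta^{-1}\dd_i\dd_j\big((\chi_Ru)_i(\chi_Ru)_j\big)$ for a cutoff $\chi_R$ equal to $1$ on $B(R/2)$ and supported in $B(3R/4)$, and $p_2^{(R)}=p-p_1^{(R)}$, which is harmonic in $B(R/2)$ since there $\Delta p=-\dd_i\dd_j(u_iu_j)$. The Calder\'on--Zygmund estimate gives $\|p_1^{(R)}(t)\|_{L_x^{(d+3)/(d+1)}}\lesssim\|u(t)\|_{L_x^{2(d+3)/(d+1)}(B(3R/4))}^2$, and since the time exponent of $D$ is exactly half that of $C$, this converts into a bound of $D_{p_1}$ at a finer scale by $C$ at a comparable scale. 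The harmonic part obeys a genuine improvement $D_{p_2}(\theta R)\lesssim\theta^{\gamma}(C(R)+D(R))$ with $\gamma=\gamma(d)>0$, by the interior estimates for harmonic functions applied after subtracting the spatial mean (legitimate, as $p$ enters \eqref{ns} only through $\grad p$). Second, for the velocity I bound $\|u\|_{L_{t,x}^{2(d+3)/(d+1)}(Q(\theta R))}$ by H\"older in spacetime --- valid because $\tfrac{2(d+3)}{d+1}<\tfrac{2(d+2)}{d}$ --- followed by the parabolic Sobolev embedding $L_t^\infty L_x^2\cap L_t^2\dot H^1\hookrightarrow L_{t,x}^{2(d+2)/d}$, and then use \eqref{firstlocalenergy}, \eqref{localCD}, \eqref{energychange} to turn the resulting energies back into the scale-invariant quantities; the convection term in the local energy equality is where the critical bound $\|u\|_{L_x^d}\le A$ is spent and is the source of the factors of $A$.

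Combining these at the ratio $\theta_0$ yields a one-step inequality of the schematic shape
\[
C(\theta_0R)+D(\theta_0R)\le\tfrac12\big(C(R)+D(R)\big)+A^{O(1)}\big(C(R)+D(R)\big)^{1+\delta},
\]
valid whenever $C(R)+D(R)\le\epsilon$, for some $\delta=\delta(d)>0$: the contracting linear term is the one produced by $D_{p_2}$, and the remaining contributions --- the convection term, the local pressure, and the velocity quantity --- are all collected into the $A$-dependent superlinear remainder, with \eqref{firstlocalenergy} and \eqref{energychange} supplying crude $A^{O(1)}$ fallback bounds that keep the iteration alive when smallness is temporarily weak. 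Since $\epsilon\le A^{-d^3}$, the hypothesis $C(R)+D(R)\le\epsilon$ makes $A^{O(1)}(C(R)+D(R))^{\delta}\le\tfrac12$, so one step sends $\{C+D\le\epsilon\}$ into itself (indeed halves it); iterating until the radius falls below $A^{-1}=\eta$ gives \eqref{finish}. The hard part is exactly this accounting: arranging the pressure split and the local-energy/Sobolev estimates so that the \emph{only} term linear in $C+D$ carries the contracting power $\theta^{\gamma}$ rather than a power of $A$, while every genuinely $A$-dependent contribution is superlinear and hence swallowed by the threshold $A^{-d^3}$. It is this constraint that pins down the choices $\eta=A^{-1}$ and $\epsilon\le A^{-d^3}$ and that makes the estimate uniform across the $O(\log A)$ sub-steps needed to span a factor of $A$ in scale.
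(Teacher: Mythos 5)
There is a genuine gap, and it sits exactly at the point where the paper (and, historically, Dong--Du's Lemma 3.2) has to work hardest: your claimed one-step inequality
\[
C(\theta_0R)+D(\theta_0R)\le\tfrac12\big(C(R)+D(R)\big)+A^{O(1)}\big(C(R)+D(R)\big)^{1+\delta}
\]
is not produced by the tools you invoke. For the velocity, H\"older from $L_{t,x}^{2(d+2)/d}$ down to $L_{t,x}^{2(d+3)/(d+1)}$ on $Q(\theta R)$ gains only the volume factor $(\theta)^{1/(d+3)}$, while the scale normalization in $C$ costs $\theta^{-(\frac d2+\frac1{d+3}-1)}$; together with the parabolic Sobolev embedding and \eqref{localCD} (whose right-hand side already carries $A^{1/2}$, spent on the convection term as you note) this gives
\[
C(\theta R)\ \lesssim\ \theta^{\,1-\frac d2}\,A^{1/2}\big(C(R)+D(R)\big),
\]
which for $d\ge4$ and fixed $\theta_0(d)$ is a \emph{linear} term with coefficient at least $A^{1/2}$ --- not $\tfrac12$, and not superlinear, so it cannot be absorbed by $\epsilon\le A^{-d^3}$; iterating $O(\log A)$ fixed-ratio steps compounds this into $A^{O(\log A)}$ and the bound $\le\epsilon$ is lost already after one step. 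The same problem afflicts your Calder\'on--Zygmund pressure piece: since $D$ is defined with a square root, $\|p_1^{(R)}\|\lesssim\|u\|^2$ translates into $D_{p_1}(\theta_0R)\lesssim\theta_0^{-O(1)}C(R)$, again linear with a constant $>1$, not a superlinear remainder. The missing idea is a linear/nonlinear splitting of the \emph{velocity itself} (not just the pressure): one must peel off a caloric/bounded piece of $u$ (heat propagator from data at the large scale, plus a ``global'' part solving the heat equation locally), which is controlled in $L^\infty$ and therefore gains a full extra factor of the small radius from H\"older on the tiny cylinder, leaving a remainder that is genuinely quadratic or high-frequency and hence beaten by $\epsilon\le A^{-d^3}$. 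This is precisely why the paper does the step in \emph{one} jump at ratio $\eta=A^{-1}$ (so that the single volume gain $\eta$ beats the single $A^{1/2}$ loss from \eqref{localCD}), uses Duhamel with a cutoff to split $u=u^l+u^g$, a frequency threshold $N=\epsilon^{-1/d}$, and harmonic/Newton-potential splittings of $p$ with a further decomposition of $P_{\le N}u$ into a bounded piece and a small piece inside the quadratic pressure term. Without that velocity decomposition your accounting cannot close, and no choice of $\theta_0$, $\delta$, or number of iterations repairs it.
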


\begin{proof}
We translate and rescale so that $z_0=0$ and $\rho=1$. By \eqref{localCD},
\eq{\label{epsilonenergy}
\|u\|_{L_t^\infty L_x^2(Q(\frac12))}+\|\grad u\|_{L_{t,x}^2(Q(\frac12))}\lesssim\epsilon A^\frac12. 
}
Fix the large frequency scale $N=\epsilon^{-\frac1d}$. By interpolation and Lemma \ref{localbernstein},
\eqn{
\|P_{>N}u\|_{L_{t,x}^{2+\frac4{d+1}}(Q(\eta))}&\leq\sum_{M>N}\|P_Mu\|_{L_t^2L_x^{2\frac{d+3}{d+1}}(Q(\eta))}^{\frac{d+1}{d+3}}\|P_Mu\|_{L_t^\infty L_x^{2\frac{d+3}{d+1}}(Q(\eta))}^{\frac 2{d+3}}\\
&\lesssim\sum_{M>N}(M^{-\frac 3{d+3}}\|P_M\grad u\|_{L_{t,x}^2(Q(\frac14))}+M^{-100d^3})^{\frac{d+1}{d+3}}\\
&\quad\quad\times(M^{\frac d{d+3}}\|P_Mu\|_{L_t^\infty L_x^2(Q(\frac14))}+M^{-100d^3})^{\frac2{d+3}}.
}
Fix a spatial cutoff $\varphi\in C_c^\infty(B(1/2))$ with $\varphi\equiv1$ in $B(1/3)$. By Lemma \ref{localize}, Plancherel, and \eqref{epsilonenergy},
\eqn{
\sum_{M>N}\|P_Mu\|_{L_t^\infty L_x^2(Q(\frac14))}^2&\lesssim\sum_{M>N}\left(\|P_M(\varphi u)\|_{L_t^\infty L_x^2(Q(\frac14))}^2+M^{-100d^3}A^2\right)\\
&\lesssim(\epsilon^2+N^{-100d^3})A^2
}
and by the same reasoning
\eqn{
\sum_{M>N}\|P_M\grad u\|_{L_{t,x}^2(Q(\frac14))}^2&\lesssim(\epsilon^2+N^{-100d^3})A^2.
}
Therefore, by H\"older's inequality, the main term is
\eqn{
&\sum_{M>N}M^{-\frac1{d+3}}\|P_M\grad u\|_{L_{t,x}^2(Q(\frac14))}^{\frac{d+1}{d+3}}\|P_Mu\|_{L_t^\infty L_x^2(Q(\frac14))}^{\frac2{d+3}}\\
&\quad\lesssim N^{-\frac1{d+3}}\left(\sum_{M>N}\|P_M\grad u\|_{L_{t,x}(Q(\frac14))}^2\right)^{\frac12\frac{d+1}{d+3}}\left(\sum_{M>N}\|P_Mu\|_{L_t^\infty L_x^2(Q(\frac14))}^2\right)^{\frac1{d+3}}\\
&\quad\lesssim N^{-\frac1{d+3}}\epsilon+N^{-50d^2}A.
}
The remaining terms all involve the small global Bernstein error and can be estimated similarly to find
\eqn{
\|P_{>N}u\|_{L_{t,x}^{2+\frac4{d+1}}(Q(\eta))}&\lesssim N^{-\frac1{d+3}}\epsilon+N^{-50d^2}A\lesssim C_0^{-1}\eta^{\frac{2+d}{2+\frac4{d+1}}-1}\epsilon.
}

To study the low frequencies, let $\phi\in C_c^\infty(Q(1/2))$ be a spacetime cutoff function satisfying $\phi\equiv1$ in $Q(1/3)$. Using Duhamel's formula we decompose $u$ into local and global parts,
\newcommand\uloc{u^l}
\newcommand\uglob{u^g}
\eqn{
\uloc(t):=e^{(t+1)\Delta}((\phi u)(-1))-\int_{-1}^te^{(t-t')\Delta}\div(\phi( u\otimes u+p\Id))(t')dt',\quad\uglob:=u-\uloc.
}
By H\"older, Lemma \ref{bernstein}, \eqref{heat}, \eqref{epsilonenergy}, fractional integration, and \eqref{start},
\eqn{
\|P_{\leq N}\uloc\|_{L_{t,x}^{2+\frac4{d+1}}(Q(\eta))}
&\lesssim\eta^{\frac{d+2}{2+\frac4{d+1}}}\|e^{(t+1)\Delta}((\phi u)(-1))\|_{L_{t,x}^\infty([-\eta^2,0]\times\Rd)}\\
&\hspace{-.55in}+N^{\frac d2+\frac1{d+3}-2}\bigg\|\int_{-1}^te^{(t-t')\Delta}\div(\phi(u\otimes u+p\Id))(t')dt'\bigg\|_{L_t^{2+\frac4{d+1}}L_x^{\frac{d(d+3)}{d^2-5}}([-\eta^2,0]\times\Rd)}\\
&\hspace{-.65in}\lesssim\eta^{\frac{d+2}{2+\frac4{d+1}}}\epsilon A^\frac12+N^{\frac d2+\frac1{d+3}-1}\epsilon^2\lesssim C_0^{-1}\eta^{\frac{2+d}{2+\frac4{d+1}}-1}\epsilon.
}
Next observe that $P_{\leq N}\uglob$ solves the heat equation in $Q(1/3)$ so by H\"older's inequality and well-known parabolic theory,
\eqn{
\|P_{\leq N}\uglob\|_{L_{t,x}^{2+\frac4{d+1}}(Q(\eta))}&\lesssim\eta^{\frac{d+2}{2+\frac4{d+1}}}\|P_{\leq N}\uglob\|_{L_{t,x}^\infty(Q(\eta))}\lesssim\eta^{\frac{d+2}{2+\frac4{d+1}}}\|P_{\leq N}\uglob\|_{L_{t,x}^{2+\frac4{d+1}}(Q(\frac14))}.
}
Clearly $P_{\leq N}\uglob=u-P_{>N}u-P_{\leq N}\uloc$. The first piece can be estimated using \eqref{start}, while the other two we have already addressed. (Note that the estimates are unaffected by changing the domain to $Q(\frac14)$ except for the heat propagator part of $P_{\leq N}u^l$; however even the worse bound $\epsilon A^\frac12$ without the improvement from using H\"older on $Q(\eta)$ suffices.) In total,
\eqn{
C(\eta)&\lesssim C_0^{-1}\epsilon.
}
Next we consider the pressure. From the decomposition
\eqn{
P_{>N}p&=\Delta^{-1}\div\div P_{>N}\Big(2P_{\leq N/5}u\odot P_{>N/5}u+(P_{>N/5}u)^{\otimes2}\Big)=:\Pi_1+\Pi_2,
}
we have
\eqn{
\|\Pi_1\|_{L_{t,x}^{1+\frac2{d+1}}(Q(\eta))}&\lesssim\|P_{\leq N/5}u\|_{L_{t,x}^{2+\frac4{d+1}}(Q(2\eta))}\|P_{>N/5}u\|_{L_{t,x}^{2+\frac4{d+1}}(Q(2\eta))}+(\eta N)^{-50d^2}A^2
}
by Lemma \ref{localbernstein} and \eqref{critical}. By the same calculations by which we estimated $C$ above and the large choice of $N$, this implies
\eqn{
\|\Pi_1\|_{L_{t,x}^{1+\frac2{d+1}}(Q(\eta))}&\lesssim C_0^{-2}\eta^{\frac{d+2}{1+\frac2{d+1}}-2}\epsilon^2.
}
For the other term we have
\eqn{
\|\Pi_2\|_{L_{t,x}^{1+\frac2{d+1}}(Q(\eta))}&\lesssim\|P_{>N/5}u\|_{L_{t,x}^{2+\frac4{d+1}}(Q(\frac15))}^2+N^{-50d^2}A^2\lesssim C_0^{-2}\eta^{\frac{d+2}{1+\frac2{d+1}}-2}\epsilon^2
}
again by Lemma \ref{localbernstein} and the calculations above.

Next we turn to the low frequencies. With $\varphi\in C_c^\infty(Q(\frac15))$ a new spacetime cutoff satisfying $\varphi\equiv1$ in $Q(\frac16)$, define
\eqn{
p^l:=-\mathcal N\div\div (\varphi u\otimes u),\quad p^g:=p-p^l
}
where $\mathcal N$ is the Newton potential. To estimate the local contribution we employ the paraproduct decomposition
\eqn{
P_{\leq N}p^l&=-\mathcal N\div\div P_{\leq N}\bigg(\varphi(P_{\leq 5N}u)^{\otimes2}+\sum_{\substack{N'\sim N''\\\max(N',N'')>5N}}\varphi P_{N'}u\otimes P_{N''}u\bigg)=:\Pi_3+\Pi_4.
}
The calculations above imply that $P_{\leq N}u$ can be decomposed as $v+w$ where
\eqn{
\|v\|_{L_{t,x}^q(Q(\frac15))}\leq C_0^{-1}\eta^{-\frac9{10}}\epsilon,\quad\|w\|_{L_{t,x}^{2+\frac4{d+1}}(Q(\frac15))}\leq C_0^{-1}\eta^{\frac{d+2}{2+\frac4{d+1}}-1}\epsilon
}
for any $q\geq1$. (For example, let $w$ be the nonlinear part of $P_{\leq N}u^l$ and $v$ the rest.) Thus, using the Calder\'on-Zygmund estimate for $\mathcal N$,
\eqn{
\|\Pi_3\|_{L_{t,x}^{1+\frac2{d+1}}(Q(\eta))}&\lesssim\eta^{\frac{d+2}{1+\frac2{d+1}}-\frac1{10}}\|\varphi v\otimes v\|_{L_{t,x}^{2q}([-\eta^2,0]\times\Rd)}+\eta^{\frac{d+2}{2+\frac4{d+1}}}\|\varphi v\odot w\|_{L_{t,x}^{2+\frac4{d+1}}([-\eta^2,0]\times\Rd)}\\
&\quad+\|\varphi w\otimes w\|_{L_{t,x}^{1+\frac2{d+1}}([-\eta^2,0]\times\Rd)}\\
&\lesssim C_0^{-2}\eta^{\frac{d+2}{1+\frac2{d+1}}-2}\epsilon^2
}
where $q$ is taken large but finite to avoid the unboundedness of $\mathcal N\div\div$ at the endpoint. By the calculations for $P_{>N}u$,
\eqn{
\|\Pi_4\|_{L_{t,x}^{1+\frac2{d+1}}(Q(\eta))}&\lesssim\sum_{N'\gtrsim N}(N')^{-\frac2{d+3}}\epsilon^2+(N')^{-100d^2}A^2\leq C_0^{-2}\eta^{\frac{d+2}{1+\frac2{d+1}}-2}\epsilon^2.
}
Finally, observe that $P_{\leq N}p^g$ is harmonic in $Q(\frac16)$. Therefore
\eqn{
\|P_{\leq N}p^g\|_{L_{t,x}^{1+\frac2{d+1}}(Q(\eta))}&\lesssim\eta^{\frac{d}{1+\frac2{d+1}}}\|P_{\leq N}p^g\|_{L_t^{1+\frac2{d+1}}L_{x}^\infty(Q(\eta))}\lesssim\eta^{\frac{d}{1+\frac2{d+1}}}\|P_{\leq N}p^g\|_{L_{t,x}^{1+\frac2{d+1}}(Q(\frac16))}.
}
Then the decomposition $P_{\leq N}p^g=p-P_{>N}p-P_{\leq N}p^l$ along with the above estimates and \eqref{start} implies the desired bound. This completes the estimate of $D(\eta)$.
\end{proof}

\subsection{Annuli and slices of regularity}

The first application of Proposition \ref{nested} is that the smallness of $C$ and $D$ implies good pointwise bounds on the solution. We state Proposition \ref{regularity} as a slightly more quantitative variant of Theorem 4.1 in \cite{dongdu}.

\begin{prop}\label{regularity}
Let $u$, $z_0$, and $\rho$ be as in Proposition \ref{nested} and suppose that for every $z_1\in Q(z_0,\rho_1/2)$, $\rho\in(0,\rho_1/2)$ we have
\eqn{
C(\rho,z_1)+D(\rho,z_1)\leq\epsilon\leq A_1^{-1}.
}
Then, for $j=0,1,2$,
\eqn{
\|\grad^ju\|_{L_{t,x}^\infty(Q(z_0,\rho_1/4))}&\leq A^{O(1)}\epsilon^{1/O(1)}\rho_1^{-1-j}.
}
\end{prop}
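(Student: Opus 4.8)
The plan is to bootstrap from the scale-invariant smallness hypothesis on $C$ and $D$ to pointwise bounds via the sharp-flat decomposition and the local Bernstein inequalities, in the standard $\epsilon$-regularity manner but tracking $A$-dependence. First I would invoke Proposition \ref{nested}, which upgrades the hypothesis at the single scale $\rho_1$ to the statement that $C(r,z_1)+D(r,z_1)\leq\epsilon A^{O(1)}$ for \emph{all} $z_1\in Q(z_0,\rho_1/2)$ and all $r\in(0,\rho_1/2)$; after enlarging the implied constant we may as well rename $\epsilon A^{O(1)}$ back to $\epsilon$ (still $\leq A^{O(1)}$ times a small absolute quantity, which is all we need downstream). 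By the scaling invariance of $C$, $D$, and the conclusion, it suffices to treat $\rho_1=1$ and $z_0=0$. From \eqref{localCD} applied on a grid of subcylinders we then have the local energy bound $\|u\|_{L_t^\infty L_x^2(Q(z_1,r/2))}+\|\grad u\|_{L_{t,x}^2(Q(z_1,r/2))}\lesssim r^{d/2-1}\epsilon A^{1/2}$ uniformly over $z_1\in Q(1/2)$ and $r\lesssim 1$, and by definition $\|u\|_{L_{t,x}^{2(d+3)/(d+1)}(Q(z_1,r))}\lesssim r^{d/2+1/(d+3)-1}\epsilon$ on the same range.

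The core of the argument is a Duhamel/iteration scheme on shrinking cylinders. Write $u$ on a slightly smaller cylinder, say $Q(3/4)$, using Proposition \ref{decomposition} (with $T_1\sim1$) as $u=\ulin+\unlin$; the flat part $\ulin$ already obeys $\|\grad^j\ulin\|_{L_{t,x}^\infty}\leq A^{O(1)}$ by \eqref{ulin}, so everything reduces to controlling $\unlin$ and its first two derivatives in $L_{t,x}^\infty(Q(1/4))$ by $A^{O(1)}\epsilon^{1/O(1)}$. For this I would run the familiar parabolic bootstrap: starting from the subcritical spacetime integrability $u\in L_{t,x}^{2(d+3)/(d+1)}$ with the small norm above, I write $\unlin$ (or rather $u$ localized by a cutoff) via the Duhamel formula against the heat semigroup, estimate the forcing $\mathbb P\div(u\otimes u + p\,\Id)$ in $L_t^qL_x^r$ using H\"older together with the bounds on $\|u\|_{L_{t,x}^{2(d+3)/(d+1)}}$ and the Calder\'on--Zygmund bound for $p$ in terms of $u\otimes u$, apply \eqref{heat} and maximal regularity for the heat equation to gain two derivatives' worth of integrability, and cut down to a slightly smaller cylinder at each stage using Lemma \ref{localize} / Lemma \ref{localbernstein} to absorb the nonlocal Bernstein tails (these contribute $N^{-K}A^{O(1)}$ errors that are harmless). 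After finitely many steps—the number depending only on $d$—one lands in $L_{t,x}^\infty$, and since each step is linear in the small norm (the quadratic nonlinearity only helps), the final bound is $\|u\|_{L_{t,x}^\infty(Q(1/3))}\leq A^{O(1)}\epsilon^{1/O(1)}$. Differentiating the equation once and twice and repeating the same Duhamel estimates on successively smaller cylinders gives the $j=1,2$ bounds on $Q(1/4)$, which after undoing the scaling is exactly the claimed $\|\grad^j u\|_{L_{t,x}^\infty(Q(z_0,\rho_1/4))}\leq A^{O(1)}\epsilon^{1/O(1)}\rho_1^{-1-j}$.

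The main obstacle is organizing the bootstrap so that the $A$-dependence stays polynomial and the power of $\epsilon$ stays positive throughout: each application of H\"older on the quadratic term splits one factor of the small norm off and leaves a second factor that must be controlled by the \emph{a priori} critical bound \eqref{critical}, i.e.\ by $A$, so one must check that the number of iterations (hence the total power of $A$ accumulated) depends only on $d$, and that the pressure term—handled by splitting into near and far pieces as in the proof of Lemma \ref{step}, the far piece being harmonic and estimated by interior elliptic regularity—does not introduce any loss beyond $A^{O(1)}$. A secondary technical point is the bookkeeping of the cutoff/nonlocal error terms: at every stage the localization produces a tail of the form $N^{-K}$ times a high power of $A$, and one has to choose the frequency cutoff $N$ (a large absolute power of $\epsilon^{-1}$, as in Lemma \ref{step}) so that these are dominated by the main term $\epsilon^{1/O(1)}A^{O(1)}$. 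None of this is conceptually hard given the tools already assembled, but it is the place where care is required; the structure is essentially that of Theorem 4.1 in \cite{dongdu}, now with constants made explicit.
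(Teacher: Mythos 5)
Your overall architecture matches the paper's at the top level (first an $\epsilon$-regularity step giving $\|u\|_{L_{t,x}^\infty(Q(1/3))}\leq A^{O(1)}\epsilon^{1/O(1)}$, then a Duhamel bootstrap on shrinking cylinders for $j=1,2$; the paper does the latter with Littlewood--Paley pieces and a paraproduct rather than by differentiating the equation, but that difference is cosmetic). However, the mechanism you actually describe for the first step has a genuine gap. You call $L_{t,x}^{2(d+3)/(d+1)}$ ``subcritical'' and propose a fixed-scale Serrin-type bootstrap: estimate $u\otimes u+p\,\Id$ by H\"older, apply heat smoothing/maximal regularity, and gain integrability in finitely many steps. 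But under parabolic scaling the critical Lebesgue exponent is $L_{t,x}^{d+2}$, and $2(d+3)/(d+1)<d+2$ for all $d\geq3$, so the fixed-scale norm is \emph{supercritical}. Concretely, one Duhamel pass sends $u\in L_{t,x}^{q_0}$ to (at best) $L_{t,x}^{q_1}$ with $\tfrac1{q_1}=\tfrac2{q_0}-\tfrac1{d+2}$, and $q_1>q_0$ only when $q_0>d+2$; starting from $q_0=2(d+3)/(d+1)$ the iteration \emph{loses} integrability at every step, so ``after finitely many steps one lands in $L^\infty$'' is false as stated, smallness notwithstanding. The information that saves the argument is precisely the smallness of $C+D$ at \emph{all} scales and basepoints (a critical Morrey-type condition), and converting that into pointwise bounds requires a scheme that uses the multi-scale structure in an essential way --- a Campanato/decay iteration across scales, a De Giorgi-type argument, or potential estimates run in Morrey rather than Lebesgue spaces. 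That is exactly the content of the Dong--Du Theorem 4.1 argument which the paper simply invokes for \eqref{linfty}; you gesture at it in your closing sentence, but the bootstrap you write down is not it and would not close.

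Two smaller points: invoking Proposition \ref{nested} at the outset is redundant, since the hypothesis of Proposition \ref{regularity} already asserts smallness at every $z_1\in Q(z_0,\rho_1/2)$ and every $\rho\in(0,\rho_1/2)$ (the two propositions are meant to be applied in tandem afterwards, as the paper does). Once the $L^\infty$ bound on $u$ is in hand, your plan for $j=1,2$ is sound in spirit and parallels the paper's, except that the paper's frequency-localized version (getting $N^{-1}$ and then $N^{-2}$ decay for $\|P_Nu\|_{L^\infty}$ before summing, with the cutoff $N_0$ chosen as a power of $\epsilon^{-1}$) is what produces the clean $\epsilon^{1/O(1)}$ factors; if you differentiate the equation instead, you need the same kind of frequency or cutoff bookkeeping to avoid losing the $\epsilon$ gain to the Bernstein tails.
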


\begin{proof}
Let us normalize $\rho_1=1$ and $z_0=0$. By the argument in the proof of Theorem 4.1 in \cite{dongdu} using the bound on $p$ coming from \eqref{critical}, one finds
\eq{\label{linfty}
\|u\|_{L_{t,x}^\infty(Q(1/3))}&\leq A^{O(1)}\epsilon^{1/O(1)}.
}
We may bootstrap the estimates for higher derivatives using Duhamel's formula. Let us fix a decreasing sequence of $O(1)$-many lengths $\frac13>r_1>r_2>r_3>\cdots>\frac14$ satisfying $r_n-r_{n+1}=1/O(1)$. For a frequency $N\gg1$ to be specified, \eqref{linfty}, \eqref{ulin}, Lemma \ref{localbernstein}, and Duhamel's formula for \eqref{ns} starting from $t=-1/3$ imply
\eqn{
\|P_Nu\|_{L_{t,x}^\infty(Q(r_1))}&\lesssim e^{-N^2/O(1)}NA+N^{-1}A^{O(1)}\epsilon^{1/O(1)}+N^{-50}A^2.
}
Clearly with $N$ large enough, the first term (from the linear propagator) is negligible compared to the third (the global contribution to Bernstein). Therefore, again by Duhamel's formula, \eqref{linfty}, and a paraproduct decomposition of $P_N(u\otimes u)$,
\eqn{
\|P_Nu\|_{L_{t,x}
^\infty(Q(r_2))}&\lesssim e^{-N^2/O(1)}NA+N^{-1}\|P_{\lesssim N}u\odot P_{\sim N}u\|_{L^\infty_{tx}(Q(r_1))}\\
&\quad+N^{-1}\sum_{N'\gtrsim N}\|P_{N'}u\|_{L^\infty_{tx}(Q(r_1))}^2+N^{-49}A^2\\
&\lesssim N^{-1}(A^{O(1)}\epsilon^{1/O(1)}+N^{-49}A)(N^{-1}A^{O(1)}\epsilon^{1/O(1)}+N^{-50}A^2)\\
&\quad+N^{-1}\sum_{N'\gtrsim N}((N')^{-1}A^{O(1)}\epsilon^{1/O(1)}+(N')^{-50}A^2)+N^{-50}NA^2\\
&\lesssim N^{-2}A^{O(1)}\epsilon^{1/O(1)}+N^{-49}A^{O(1)}.
}
Thus, once again by Duhamel's formula and \eqref{heat}, for any $N_0>0$,
\eqn{
\|\grad u\|_{L_{t,x}^\infty(Q(r_3))}&\lesssim N_0\|u\|_{L_{t,x}
^\infty(Q(1/3))}+N_0^{-48}A+\sum_{N>N_0}(N\|P_Nu\|_{L_{t,x}^\infty(Q(r_2))}+N^{-48}A)\\
&\leq N_0A^{O(1)}\epsilon^{1/O(1)}+N_0^{-48}A^{O(1)}.
}
By taking $N_0$ to be a suitable power of $\epsilon^{-1}$, we arrive at
\eqn{
\|\grad u\|_{L_{t,x}^\infty(Q(r_3))}&\leq A^{O(1)}\epsilon^{1/O(1)}.
}
Proceeding in the same way, one can obtain the higher order estimates as well.
\end{proof}

Taking Propositions \ref{nested} and \ref{regularity} together, we obtain the useful fact that if $C(Q)+D(Q)\leq A_1^{-1}$, then we have good pointwise bounds for $u$ in $Q/2$. (Clearly we may also replace $Q/2$ with, say, $9Q/10$ by trivially modifying the proofs.) As an application, we prove the first partial regularity result. As discussed in more depth in Section \ref{carleman}, by letting the region of regularity expand in space (as opposed to taking, say, $Q_0\times\mathbb R^{d-k}$ for some small $Q_0\subset\mathbb R^k$), we obtain better estimates upon iterating unique continuation. We remark that we do not claim this to be the optimal result; indeed one should expect that regular regions exist that are unconstrained in up to three of the $d+2$ parabolic dimensions, (cf.\ epochs of regularity when $d=3$ which are unbounded in all three spatial dimensions). In this case, the region is unbounded in only one spatial dimension, i.e., radially toward $\theta$.

\begin{prop}[Slices of regularity]\label{slices}
Assume $u$ is smooth and satisfies \eqref{ns} and \eqref{critical} on $[-T,0]$, $z_0\in[-T/2,0]\times\Rd$, and $R^2\leq T/4$. Then there exist a direction $\theta\in S^{d-1}$ and a time interval $I\subset[t_0-R^2,t_0]$ with $|I|=A_2^{-2}R^2$ such that within the slice
\eqn{
S=I\times\{x\in\Rd:\dist(x,x_0+\mathbb R_+\theta)\leq 10A_2^{-1}|(x-x_0)\cdot\theta|,\,|x-x_0|\geq20R\}\subset[-T,0]\times\Rd,
}
for $j=0,1,2$, we have
\eqn{
\|\grad^ju\|_{L_{t,x}^\infty(S)}\leq A_1^{-1}\bigg(\frac R{A_2}\bigg)^{-1-j}.
}
\end{prop}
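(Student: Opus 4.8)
The plan is to locate the slice by a direction-pigeonholing argument, using Propositions \ref{nested} and \ref{regularity} to turn smallness of the scale-invariant quantities $C,D$ into the desired pointwise control. First I would reduce: by Proposition \ref{nested} followed by Proposition \ref{regularity} (as in the paragraph after the latter, but with $\epsilon$ a suitably large negative power of $A$, say $\epsilon=A_1^{-O(1)}$), it suffices to produce $\theta\in S^{d-1}$ and an interval $I$ with $|I|=A_2^{-2}R^2$ such that $C(\rho,z_1)+D(\rho,z_1)\le\epsilon$ for every $z_1$ in a mild enlargement of $S$ and every $\rho$ below a threshold $\rho(z_1)\sim A_2^{-1}(R+|x_1-x_0|)$. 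Equivalently, I would cover $S$ by parabolic cylinders $Q(z_1,\rho(z_1)/4)$ --- which may stick out of $S$, since $C,D$ only need $u$ to be smooth there --- on each of which $C(\rho(z_1),z_1)+D(\rho(z_1),z_1)\le\epsilon$.

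Two estimates make this tractable. First, a parabolic Gagliardo-Nirenberg inequality gives $C(r,z_1)\lesssim\mathcal E(r,z_1)^{1/2}$, where $\mathcal E(r,z_1):=r^{2-d}(\sup_t\int_{B(x_1,r)}|u(t)|^2+\int\int_{Q(z_1,r)}|\grad u|^2)$ is the scale-invariant local energy; this uses that the exponent $q=2(d+3)/(d+1)$ in the definition of $C$ lies strictly between $2$ and the Sobolev exponent $2d/(d-2)$. Hence $C(r,z_1)$ is small wherever the local energy is small, and \eqref{firstlocalenergy} both gives $\mathcal E\lesssim A^{O(1)}$ everywhere and, crucially, bounds the \emph{total} energy residing at any fixed dyadic spatial scale. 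Second, $D$ is handled by the usual near/far splitting of $p=-\Delta^{-1}\div\div(u\otimes u)$: the part sourced near $z_1$ is controlled by $\|u\|_{L_x^q}$ hence by $C$ at comparable scales, while the complement is harmonic and so, restricted to the small ball, gains a positive power of the ratio of scales; together with the global bound $\|p(t)\|_{L_x^{d/2}}\lesssim A^2$ this yields $D(r,z_1)\lesssim\sup_{r\le r'\le\bar r}C(r',z_1)+(r/\bar r)^{\gamma}(D(\bar r,z_1)+A)$ for any admissible $\bar r$. Choosing $\bar r\sim A_2\rho(z_1)$ makes the error $\lesssim A_2^{-\gamma}A\ll\epsilon$, so $D(r,z_1)\le\epsilon$ provided $\mathcal E(r',z_1)\lesssim\epsilon^2$ at the $O(\log A)$ dyadic scales $r'\in[r,\bar r]$.

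So, say $z_1$ is bad if $\mathcal E$ exceeds the threshold at some scale in that $O(\log A)$-long range. By \eqref{firstlocalenergy}, the energy on the spherical shell $\{|x-x_0|\sim s\}$ over a time interval of length $\sim r^2$ is $\lesssim A^{O(1)}s^{d-4}r^2$, so that shell contains $\lesssim A^{O(1)}\epsilon^{-2}(s/r)^{d-4}$ essentially disjoint bad $r$-cylinders; with $r\sim A_2^{-1}s$ this is $\lesssim A^{O(1)}\epsilon^{-2}A_2^{d-4}$, uniformly in $s$. A bad cylinder at distance $s$ obstructs only those $\theta$ within angle $\sim A_2^{-1}$ of the corresponding ray, a fraction $\sim A_2^{-(d-1)}$ of $S^{d-1}$. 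Summing over all bad cylinders in the relevant dyadic shells and scales --- which, up to a logarithmic factor in $T/R^2$ for the number of scales, number $A^{O(1)}$ (and in the unbounded far tail the solution, being classical, is too small there to obstruct anything) --- the obstructed subset of $S^{d-1}$ has measure $\lesssim A^{O(1)}A_2^{-(d-1)}|S^{d-1}|\ll|S^{d-1}|$. Hence some $\theta$ gives a cone $\Gamma_\theta$ with no bad cylinder; taking $I$ to be any length-$A_2^{-2}R^2$ subinterval of $[t_0-R^2,t_0]$, every relevant cylinder is good, and the reduction of the first paragraph yields the claimed bounds on $S$.

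The crux is the interplay between the nonlocality of the pressure and the need for uniformity: $D(r,z_1)$ can never be made small from the local energy at a single scale, which forces control of $\mathcal E$ simultaneously at a whole range of scales around every point of the unbounded cone, and then a counting/dodging argument that is uniform over all those scales and all dyadic shells. It is precisely the extreme thinness of the cone --- half-angle $A_2^{-1}$, hence of order $A_2^{d-1}$ disjoint directions available --- that supplies enough room in the direction variable to absorb the accumulated obstructions.
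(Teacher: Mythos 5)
Your overall reduction (make $C+D$ small on a family of cylinders covering the slice, then invoke Propositions \ref{nested} and \ref{regularity}) matches the paper, but the mechanism you propose for producing the smallness has a genuine gap. Your count of ``bad'' cylinders via \eqref{firstlocalenergy} only works for quantities that are superadditive over disjoint cylinders, i.e.\ the dissipation $\int\int_Q|\grad u|^2$ and the $L^2$ mass at a \emph{fixed} time. The term you actually need for the Gagliardo--Nirenberg bound on $C$ is $\sup_{t\in I_Q}\int_{B_Q}|u|^2$, and sums of these over disjoint cylinders, each evaluated at its own time, are not controlled by $\sup_t\int_{B(x_0,2s)}|u|^2$: a single blob of $L^2$ mass of size $\epsilon^2 r^{d-2}$ moving through many disjoint balls at different times makes arbitrarily many cylinders ``bad'' without violating \eqref{firstlocalenergy}. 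This is exactly the kind of scenario that bounded total speed would exclude in $d=3$ and which is unavailable here (cf.\ the fractal scenario discussed before Proposition \ref{prop1}), so your surviving cylinders need not have small $\|u\|_{L_t^\infty L_x^2}$ at all --- H\"older from \eqref{critical} only gives $O(A^2)$, not $\epsilon^2$. The natural repair (pigeonhole a good initial time and propagate with \eqref{energychange}) requires the pressure/$D$ control on the cylinder, while your $D$ bound requires $C$ (hence the local energy) at a range of scales: the argument is circular. The paper breaks this circle with two ingredients your proposal never uses: the sharp--flat decomposition of Proposition \ref{decomposition}, whose globally square-integrable outputs $\grad\unlin$ and $\Delta^{-1}\div\div\,\unlin\otimes\unlin$ (with $A^{O(1)}$ bounds, \eqref{sharpenergy}) are pigeonholed over $\gtrsim A_2^{d+1}$ disjoint slices to obtain \eqref{dbound} and \eqref{goodenergy}; and a pigeonhole of the critical norm $\|u(t_0)\|_{L_x^d}$ over the $\gtrsim A_2^{d-1}$ slices sharing one time interval, converted to $L^2$ smallness on $A_2^{-1}R$-balls by H\"older and then propagated in time by \eqref{energychange}. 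Some substitute for this second step is indispensable: smallness of $u$ itself, not just of $\grad u$, has to come from the critical norm, and no counting of local energies delivers it.

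A secondary problem is your treatment of the unbounded cone. As you set it up (counting per time-slab of thickness $r^2$), the per-shell number of bad cylinders is uniform in $s$, so the obstructed fraction of $S^{d-1}$ is a divergent sum; the parenthetical appeal to qualitative decay of a classical solution gives no bound, in terms of $A$, on how many shells can host bad cylinders, so ``obstructed fraction $\ll1$'' does not follow as written. This piece looks repairable --- counting against \eqref{firstlocalenergy} over the whole window $[t_0-R^2,t_0]$ makes the per-shell obstructed fraction decay like $(R/s)^2$, and capping the cylinder scale at $\sim T^{1/2}$ (forced anyway by the hypotheses of Proposition \ref{nested}) handles the far region --- but it is a gap in the proposal as stated. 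Finally, note that with $\bar r\sim A_2\rho(z_1)\sim|x_1-x_0|$ your pressure step demands energy smallness on balls whose radius is comparable to the distance to $x_0$; such balls leave the cone and can contain the original concentration region, so no choice of $\theta$ can make them good. You would need to shrink $\bar r$ to something like $A_1^{O(1)}A_2^{-1}|x_1-x_0|$, which the harmonic gain still permits, for the geometry to be consistent.
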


\begin{proof}
We normalize $R=1$ and $z_0=0$, then apply Proposition \ref{decomposition} on the interval $[-2,0]$. Let $\mathcal S_0$ be the collection of all spacetime regions of the form
\eqn{
I\times\{x\in\Rd:\dist(x,x_0+\mathbb R_+\theta)\leq20A_2^{-1}|(x-x_0)\cdot\theta|,\,|x-x_0|\geq10\}
}
ranging over all $\theta\in S^{d-1}$ and $I=[-10A_2^{-2}k,-10A_2^{-2}(k-1)]$ where $k\in[1,A_2^2/10]\cap\mathbb N$. Clearly we may find a disjoint subcollection $\mathcal S_1$ containing $\gtrsim A_2^{d+1}$ such slices. We seek to find one where we can apply Propositions \ref{nested} and \ref{regularity}. To find a region where $D$ is small, observe that by the Calder\'on-Zydmund estimate for $\div\div/\Delta$, H\"older's inequality, Sobolev embedding, and \eqref{sharpenergy},
\eqn{
\|\Delta^{-1}\div\div\unlin\otimes\unlin\|_{L_t^1L_x^{\frac d{d-2}}([-1,0]\times\mathbb R^d)}&\lesssim\|\unlin\|_{L_t^2L_x^{\frac{2d}{d-2}}([-1,0]\times\mathbb R^d)}^2\leq A^{O(1)}.
}
By interpolation with the $L_t^\infty L_x^\frac d2$ bound from \eqref{unlin},
\eqn{
\|\Delta^{-1}\div\div\unlin\otimes\unlin\|_{L_{t,x}^2([-1,0]\times\Rd)}&\leq A^{O(1)}.
}
As a result, of the $\gtrsim A_2^{d+1}$ slices in $\mathcal S_1$, at least 99\% must have
\eqn{
\|\Delta^{-1}\div\div\unlin\otimes\unlin\|_{L_{t,x}^2(S)}\leq A_1A_2^{-\frac{d+1}2}.
}
Using \eqref{ulin} and H\"older's inequality, it is easy to see that the same can be said for $\ulin\odot\unlin$ and $\ulin\otimes\ulin$. Let $\mathcal S_2\subset\mathcal S_1$ be the collection of all such slices. Combining these estimates and applying H\"older's inequality, we have
\eq{\label{dbound}
D(Q)\leq A_1A_2^{-\frac34}
}
for every parabolic cylinder $Q\subset S$ of length $\sim A_2^{-1}$ and every $S\in\mathcal S_2$. By the same argument along with \eqref{sharpenergy}, most of the $S\in\mathcal S_2$ satisfy
\eq{\label{goodenergy}
\|\grad \unlin\|_{L_{t,x}^2(S)}\leq A_1A_2^{-\frac{d+1}2},
}
so in fact the family $\mathcal S_3$ of slices satisfying both \eqref{dbound} and \eqref{goodenergy} has $\#(\mathcal S_3)\geq C_0^{-1}A_2^{d+1}$. Each of these slices occupies one of $\sim A_2^{2}$ time intervals, so by the pigeonhole principle, there is an interval $I=[t_0,t_0+A_2^{-2}]$ which contains at least $C_0^{-2}A_2^{d-1}$ slices in $\mathcal S_3$. By \eqref{critical}, there must be one of these slices $S_0$ such that
\eqn{
\|u(t_0)\|_{L_x^d(S_{0,x})}\lesssim A_2^{-1+\frac1d}A
}
where $S_{0,x}\subset\Rd$ is the projection of $S_0$ to the spatial components. Then by H\"older's inequality, for every ball of length $A_2^{-1}$ inside $S_{0,x}$,
\eqn{
\|u(t_0)\|_{L_x^2(B)}\lesssim A_2^{-\frac d2+\frac1d}A.
}
By \eqref{energychange}, \eqref{dbound}, and \eqref{goodenergy}, for any $Q\subset S_0$ of length $A_2/2$,
\eqn{
\|u\|_{L_t^\infty L_x^2(Q/2)}&\leq A_2^{-\frac d2+\frac58}A_1.
}
Note that the bound \eqref{goodenergy} on $\unlin$ can be restricted to any such $Q\subset S_0$ and extended to the full solution $u$ using \eqref{ulin} and H\"older's inequality. We conclude from the above and the local Gagliardo-Nirenberg inequality (see eg.\ Lemma 2.1 in \cite{dongdu}) that
\eqn{
C(Q)&\lesssim A_2^{\frac d2-1}\left(\|\grad u\|_{L_{t,x}^2(Q)}^\frac d{d+3}\|u\|_{L_t^\infty L_x^2(Q)}^{\frac3{d+3}}+\|u\|_{L_t^\infty L_x^2(Q)}\right)\leq A_1A_2^{-\frac38}
}
for any $Q\subset S_0$ of length $A_2/2$. This along with \eqref{dbound} leads to the claimed bounds by Propositions \ref{nested} and \ref{regularity}.
\end{proof}

The next proposition should be compared to Proposition 3.1(vi) in \cite{tao}. In the case $d\geq4$ it will be necessary locate even wider annuli where the solution enjoys good subcritical bounds, at the expense of needing to search a larger range of length scales. Note that in \cite{tao} a key ingredient of the proof is the bounded total speed property which is unavailable in high dimensions. For this reason we proceed in the manner of Barker and Prange who use an $\epsilon$-regularity criterion to find quantitative annuli of regularity; see \cite[Section 6]{barkerprange}.

\begin{prop}[Annuli of regularity]\label{annuli}
Let $u$ be a smooth solution of \eqref{ns} satisfying \eqref{critical} on $[-10,0]$. For any $R_0\geq 2$, there exists a scale $R\in[R_0,R_0^{\exp(A_4)}]$ such that for $j=0,1,2$,
\eqn{
\|\grad^ju\|_{L^\infty_{t,x}([-1,0]\times\{R\leq|x|\leq R^{2A_4}\})}&\leq A_4^{-1/O(1)}.
}
\end{prop}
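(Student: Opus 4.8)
The plan is to reduce the statement, via the $\epsilon$-regularity package of Propositions~\ref{nested} and~\ref{regularity}, to a pigeonhole over dyadic spatial shells. The only globally bounded quantities available when $d\geq4$ are $\|u\|_{L_t^\infty L_x^d}\leq A$ and, by Calder\'on--Zygmund, $\|p\|_{L_t^\infty L_x^{d/2}}\lesssim A^2$; both are summable once space is cut into dyadic annuli, and the window $[R_0,R_0^{\exp(A_4)}]$ is wide enough on a logarithmic scale to contain $\gtrsim A_3$ disjoint intervals of shell indices of relative logarithmic width $\sim A_4$. Since the time slab has bounded length, one cannot use parabolic cylinders of radius $\gg1$, so throughout one works with cylinders of radius $\sim1$ even where $|x|$ is enormous; this is why the pressure must be handled through its global bound rather than any local device.

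First I would set up the reduction. For $z_0=(t_0,x_0)$ with $t_0\in[-1,0]$, since $q:=\tfrac{2(d+3)}{d+1}\leq d$ (hence also $q/2\leq d/2$) for $d\geq4$ and $Q(z_0,1)$ has bounded spacetime measure, H\"older's inequality gives
\eqn{C(1,z_0)+D(1,z_0)\lesssim\|u\|_{L_{t,x}^d(Q(z_0,1))}+\|p\|_{L_{t,x}^{d/2}(Q(z_0,1))}^{1/2}.}
Writing the dyadic shells as $\Sigma_k:=\{2^k\leq|x|<2^{k+1}\}$ and $m_k:=\int_{-2}^0\int_{\Sigma_k}(|u|^d+|p|^{d/2})\,dx\,dt$, one has $\sum_k m_k\lesssim A^{O(1)}$ by \eqref{critical} and the pressure bound. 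Call a shell \emph{bad} if $m_k>\mu$, where $\mu:=A^{-C_0^4/4}$; then there are fewer than $A^{O(1)}/\mu\leq A^{C_0^4/2}$ bad shells, after enlarging $C_0$ to defeat the dimensional constants. If $z_0$ lies over a point whose (at most $O(1)$) surrounding shells --- those meeting $B(x_0,1)$ --- are all good, the displayed estimate yields $C(1,z_0)+D(1,z_0)\leq\epsilon_0:=C_0\mu^{1/d}=C_0A^{-C_0^4/(4d)}$, which for $C_0$ large (depending on $d$) is $\leq A^{-d^3}$. Then Proposition~\ref{nested} gives $C(r,z_1)+D(r,z_1)\leq\epsilon_0A^{O(1)}\leq A_1^{-1}$ for all $z_1\in Q(z_0,1/2)$ and $r<1/2$, and Proposition~\ref{regularity} (with outer radius $1/2$) gives $\|\grad^ju\|_{L_{t,x}^\infty(Q(z_0,1/8))}\lesssim A^{O(1)}\epsilon_0^{1/O(1)}$ for $j=0,1,2$.

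The remaining task is to locate a long run of consecutive good shells far out. I would take shell indices $\ell_0:=\lceil\log_2R_0\rceil$ and $\ell_{j+1}$ slightly larger than $2A_4\ell_j$ (say $\lceil2A_4\ell_j\rceil+5$), so that the intervals $I_j:=[\ell_j-2,\,2A_4\ell_j+2]$ are pairwise disjoint and $\ell_j\lesssim(2A_4)^j\log_2R_0$; the constraint $2^{\ell_j}\leq R_0^{\exp(A_4)}$ then holds for all $j\leq J$ with $J\sim A_4/\log(2A_4)\geq A^{3C_0^4/4}$ for $A$ large. Since the $I_j$ are disjoint, at most $A^{C_0^4/2}<J$ of them can contain a bad shell, so some $I_{j^\star}$ consists entirely of good shells. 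Setting $R:=2^{\ell_{j^\star}}\in[R_0,R_0^{\exp(A_4)}]$, every point of $[-1,0]\times\{R\leq|x|\leq R^{2A_4}\}$ lies in a cylinder $Q(z_0,1/8)$ of the above type --- its surrounding shells all have index in $I_{j^\star}$ --- and therefore
\eqn{\|\grad^ju\|_{L_{t,x}^\infty([-1,0]\times\{R\leq|x|\leq R^{2A_4}\})}\lesssim A^{O(1)}\epsilon_0^{1/O(1)}=A^{O(1)}\big(C_0A^{-C_0^4/(4d)}\big)^{1/O(1)}\leq A_4^{-1/O(1)},}
the last inequality using once more that $C_0$ --- hence the exponent $C_0^4$ in $A_4$ --- is a large power of the dimensional constants.

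I expect the main obstacle to be the quantitative bookkeeping rather than any single estimate. Pigeonholing a quantity of total size $A^{O(1)}$ across the $\sim A_4$ available scales can at best force smallness of order $A^{O(1)}/A_4$, so $\epsilon_0$ cannot be taken much below $A_4^{-O(1)}$; one must check that this is still enough for the output $A^{O(1)}\epsilon_0^{1/O(1)}$ of Proposition~\ref{regularity} to fall below $A_4^{-1/O(1)}$, which works precisely because $A_4=A^{C_0^4}$ with $C_0$ large. This is exactly what forces the \emph{geometric} (self-accelerating) spacing $\ell_{j+1}\sim2A_4\ell_j$: a good run obeying a uniform lower bound on its length could never attain length $\gtrsim A_4\log_2R$ once $\log_2R$ is as large as $\exp(A_4)\log_2R_0$, because that already exceeds the total number of shells in the range, whereas the geometric intervals are self-scaling and so each needs a run only proportional to its own starting index. (This acceleration is of the same flavor as the expanding Carleman iteration used later for the vorticity.)
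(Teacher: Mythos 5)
Your proposal is correct and follows essentially the same route as the paper: pigeonhole the globally bounded critical mass $\int(|u|^d+|p|^{d/2})$ over geometrically spaced, disjoint annular regions inside the window $[R_0,R_0^{\exp(A_4)}]$, use H\"older on unit parabolic cylinders to make $C+D$ small there, and then apply Propositions \ref{nested} and \ref{regularity}. The only difference is presentational: you carry out the pigeonhole at the level of dyadic shells with an explicit good/bad count and interval spacing, whereas the paper pigeonholes directly over candidate annuli $\{R/10\leq|x|\leq10R^{2A_4}\}$ and leaves that counting implicit.
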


\begin{proof}
Since, by \eqref{critical},
\eqn{
\int_{[-10,0]\times\{R_0\leq|x|\leq R_0^{\exp A_4}\}}(|u|^d+|p|^{d/2})dxdt\leq A^{O(1)},
}
the pigeonhole principle implies that there exists $R$ in the desired range such that
\eqn{
\int_{[-10,0]\times\{R/10\leq|x|\leq10R^{2A_4}\}}(|u|^d+|p|^{d/2})dxdt\leq A_4^{-\frac12}
}
and therefore, by H\"older's inequality, for every parabolic cylinder $Q\subset[-10,0]\times\{R/10\leq|x|\leq10R^{2A_4}\}$, 
\eqn{
\|u\|_{L_{t,x}^{2+\frac4{d+1}}(Q)}+\|p\|_{L_{t,x}^{1+\frac2{d+1}}(Q)}&\lesssim A_4^{-\frac12}.
}
This implies that the region $[-1,0]\times\{R\leq|x|\leq R^{2A_4}\}$ can be covered by a collection of cylinders $Q_j/2$ such that $Q_j\subset[-10,0]\times\{R/10\leq|x|\leq10R^{2A_4}\}$ and $C(Q_j)+D(Q_j)\lesssim A_4^{-\frac12}$. Successively applying Propositions \ref{nested} and \ref{regularity} in all the $Q_j$ yields the desired bounds.
\end{proof}

\subsection{Backward propagation of concentrations}

Next we prove a high-dimensional analogue of Proposition 3.1(v) in \cite{tao}. The proof given there is obtained by iterating a lemma for very short back-propagation, with the bounded total speed property (Proposition 3.1(ii) in \cite{tao}) preventing the sequence of concentrations from traveling too far through space. Although the bounded total speed is unlikely to hold when $d\geq4$, Proposition \ref{nested} is a suitable replacement.

\begin{prop}\label{bp}
Suppose $u$ is smooth and satisfies \eqref{ns} and \eqref{critical} on $[-T,0]$ where $T\geq100$. If $N_0\geq10A_1$ and
\eqn{
|P_{N_0}u(0)|\geq A_1^{-1}N_0,
}
then there exist $z_1\in[-1,-A_2^{-1}]\times B(A_2)$ and $N_1\in[A_2^{-1},A_2]$ such that
\eqn{
|P_{N_1}u(z_1)|\geq A_2^{-1}.
}
\end{prop}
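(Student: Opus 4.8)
The plan is to run a backward-in-time iteration in frequency space, starting from the high-frequency concentration $|P_{N_0}u(0)|\geq A_1^{-1}N_0$, and to stop the iteration the moment we reach frequency scale $\sim 1$ (i.e.\ in $[A_2^{-1},A_2]$). The basic mechanism is the one from \cite{tao}: write $u(t)=e^{t\Delta}u(0)+\int_0^t e^{(t-t')\Delta}\mathbb P\div(u\otimes u)(t')\,dt'$ and apply $P_N$. The linear part $e^{t\Delta}P_Nu(0)$ retains a concentration of size $\gtrsim A_1^{-1}N$ as long as $|t|\lesssim N^{-2}/O(1)$, by Lemma \ref{frequencylocalized} and the smoothing in \eqref{heat}. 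So either the concentration persists backward for the full time $\sim N^{-2}$ — giving us a concentration at an earlier time at the same frequency — or else the nonlinear Duhamel term must itself be large at some point in $(-cN^{-2},0]$, which, after a paraproduct decomposition and Lemma \ref{frequencylocalized} (the bounds $\|\grad^jP_Nu\|_\infty\lesssim_j AN^{1+j}$ together with Bernstein), forces a concentration $|P_{N'}u(z')|\gtrsim A^{-O(1)}N'$ at some nearby spacetime point $z'$ and some frequency $N'$ comparable to $N$ up to a factor $A^{O(1)}$. Iterating this, we produce a sequence of concentrations $(z_k,N_k)$ with $N_{k+1}\in[A^{-O(1)}N_k,A^{O(1)}N_k]$, each successive time shifted earlier by $\sim N_k^{-2}$ and each successive spatial center moved by at most $\sim N_k^{-1}$ (the physical diameter of a frequency-$N_k$ bump over a time of length $N_k^{-2}$). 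Because $N_0\geq 10A_1$ and we lose only a bounded multiplicative factor per step, after $O(\log A)$ steps we arrive at a frequency in $[A_2^{-1},A_2]$; the total elapsed time is dominated by the last (smallest-frequency) step, hence $\lesssim A_2^{O(1)}$, which we can absorb into the interval $[-1,-A_2^{-1}]$ after fixing $C_0$ large, and the total spatial displacement is likewise $\lesssim A_2^{O(1)}$, landing us in $B(A_2)$. The output amplitude $\gtrsim A^{-O(1)}$ at frequency $\sim 1$ is then $\geq A_2^{-1}$ for $C_0$ large, as required.

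The main obstacle — and the reason Proposition \ref{nested} is invoked in the paragraph preceding the statement — is controlling the \emph{spatial} drift of the iterated concentrations. A naive bound on how far a frequency-$N_k$ concentration can move in time $N_k^{-2}$ is $O(1)$ units of the scale $N_k^{-1}$, i.e.\ $O(N_k^{-1})$; summing over the geometric-in-$N$ sequence gives $O(N_{\mathrm{final}}^{-1})=O(A_2)$ spatial displacement, which is exactly what we need. But this per-step bound is only as honest as our control on the nonlinearity \emph{away} from the previous concentration point: the paraproduct term could, a priori, pick up a large contribution from a region far from $z_k$. This is where one uses the smallness-propagation machinery: if $C+D$ were small on a cylinder $Q(z,\rho)$ with $\rho\sim N_k^{-1}$ well-separated from $z_{k+1}$, then by Propositions \ref{nested} and \ref{regularity} $u$ would be pointwise bounded by $A^{O(1)}N_k$ there, so it could not support the needed concentration — hence the new concentration must occur within $O(N_k^{-1})$ of $z_k$ (or, more precisely, within a spacetime region whose size is controlled by the Carleman/regularity budget), pinning down the drift. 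Equivalently, one localizes the Duhamel integral spatially using Lemma \ref{localbernstein}, so that the long-range tail of $\mathbb P\div(u\otimes u)$ contributes only an $N^{-K}A^{O(1)}$ error, and the local part is handled by Lemma \ref{frequencylocalized}.

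Concretely I would organize the proof as follows. \emph{Step 1 (one backward step).} Prove a lemma: if $|P_Nu(z)|\geq \delta N$ with $\delta\geq A^{-O(1)}$ at a point $z=(t,x)$ with $[t-cN^{-2},t]\subset[-T,0]$, then there exist $z'$ with $t'\in[t-cN^{-2},t]$, $|x'-x|\lesssim N^{-1}$, and $N'\in[A^{-O(1)}N,A^{O(1)}N]$ with $|P_{N'}u(z')|\geq A^{-O(1)}$ \emph{unless} $\min(N',1)$-scale considerations already put us at the bottom — i.e.\ the step either descends in frequency by a bounded factor or terminates. This is Duhamel + paraproduct + Lemma \ref{frequencylocalized} + Lemma \ref{localbernstein}, exactly as in the corresponding short-time step in \cite{tao}, with Proposition \ref{nested}/\ref{regularity} supplying the spatial localization in place of bounded total speed. \emph{Step 2 (iteration and bookkeeping).} Apply Step 1 repeatedly starting from $(0,0,N_0)$; since each step multiplies the frequency by a factor in $[A^{-O(1)},A^{O(1)}]$ and moves down toward $1$, after $K=O(\log_{C_0}A)=O(1)$ in the $C_0$-hierarchy sense (in fact $K\lesssim \log A$) steps the frequency first enters $[A_2^{-1},A_2]$; we stop there. \emph{Step 3 (collecting the estimates).} Sum: $\sum_k N_k^{-2}\lesssim N_{\mathrm{final}}^{-2}\lesssim A_2^{2}$ for the time, $\sum_k N_k^{-1}\lesssim A_2$ for the space, and the amplitude at the final step is $\geq A^{-O(1)}\geq A_2^{-1}$ after enlarging $C_0$. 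Since we may also shrink the very first handful of steps so the total time is $\leq 1$ and is $\geq A_2^{-1}$ (the last step alone contributes $\sim N_{\mathrm{final}}^{-2}\gtrsim A_2^{-2}$, and a step or two more pushes $t_1$ below $-A_2^{-1}$), and $T\geq 100$ guarantees we never run out of room to propagate backward, we obtain $z_1\in[-1,-A_2^{-1}]\times B(A_2)$ and $N_1\in[A_2^{-1},A_2]$ with $|P_{N_1}u(z_1)|\geq A_2^{-1}$, completing the proof.
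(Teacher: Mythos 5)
Your strategy---iterating a short backward-in-time Duhamel step to walk the frequency down from $N_0$ to scale $\sim 1$---is essentially the $d=3$ scheme of \cite{tao}, and as written it has genuine gaps. First, the bookkeeping fails: descending from $N_0$ to $[A_2^{-1},A_2]$ by factors of $A^{O(1)}$ per step takes on the order of $\log N_0/\log A$ steps, and $N_0$ is arbitrary, so the number of steps is \emph{unbounded} in terms of $A$, not $O(\log A)$. Since your one-step lemma only returns a concentration of size $A^{-O(1)}N'$ with a constant that degrades at each application, after unboundedly many steps the final amplitude is not $\geq A_2^{-1}$; to survive the iteration the step must reproduce the hypothesis with the \emph{identical} constant $A_1^{-1}$, and proving that is exactly the delicate part of Tao's argument, where bounded total speed enters---the tool the paper explains is unavailable for $d\geq4$. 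Second, nothing in your dichotomy forces the frequency to decrease: the ``concentration persists'' branch keeps the same frequency while advancing time by only $O(N^{-2})$, and the nonlinear branch allows $N'$ to be larger as well as smaller than $N$, so there is no argument that the iteration reaches $[A_2^{-1},A_2]$ within elapsed time $\leq 1$. Third, the one-step estimate itself does not close with the tools you cite: in the paraproduct for $P_N(u\otimes u)$ the high-high terms $P_{M}u\otimes P_{M'}u$ with $M\sim M'\gg N$ are each of size $O(A^2N^2)$ by Bernstein and \eqref{critical}, with no decay in $M$, so Lemma \ref{frequencylocalized} plus Bernstein cannot sum them; one needs an $L^2$-based input such as \eqref{sharpenergy} from Proposition \ref{decomposition}. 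Finally, your use of Propositions \ref{nested}--\ref{regularity} to pin the spatial drift is circular: you have no a priori smallness of $C+D$ on cylinders away from $z_k$, so you cannot conclude that region is regular and thereby exclude it.

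The paper's proof avoids iteration altogether, and this is the intended role of Proposition \ref{nested}. From the concentration, Lemmas \ref{frequencylocalized} and \ref{localbernstein} give $C(A_1N_0^{-1},0)\geq A_1^{-3d}$, and a \emph{single} application of Proposition \ref{nested} in the contrapositive (smallness of $C+D$ at unit scale would force smallness at every smaller scale near $0$) yields $C(1,0)+D(1,0)\geq A_1^{-4d}$. A Littlewood--Paley pigeonhole at unit scale---low frequencies bounded by $A/M$, high frequencies by $M^{-1/(d+3)}A^{O(1)}$ via \eqref{PNuflat}, \eqref{unlin}, \eqref{sharpenergy}, and an analogous paraproduct analysis of the pressure when $D(1,0)$ is the large term---then leaves an intermediate frequency $N_1\in[M^{-1},M]$ with $M=A_1^{O(1)}$ and a point $z_1\in[-1,0]\times B(A_2)$ where $|P_{N_1}u(z_1)|\geq A_1^{-O(1)}$; the time is pushed into $[-1,-A_2^{-1}]$ by the fundamental theorem of calculus and Lemma \ref{frequencylocalized}. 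In short, Proposition \ref{nested} replaces bounded total speed globally and in one shot, not per step, and your proposal would need a substantially different (and currently missing) one-step lemma to be repaired.
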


\begin{proof}
Using Lemma \ref{frequencylocalized} to deduce that there must be a parabolic cylinder about $z=0$ where we still have the lower bound on $|P_{N_0}u|$, we have
\eqn{
A_1^{-1}N_0r^{\frac{(d+1)(d+2)}{2(d+3)}}&\leq\|P_{N_0}u\|_{L_{t,x}^{2+\frac{4}{d+1}}(Q(r))}\lesssim\|u\|_{L_{t,x}^{2+\frac4{d+1}}(Q(A_1^3r))}+A_1^{-50}r^{\frac{(d+1)(d+2)}{2(d+3)}-1}A
}
with $r= A_1^{-2}N_0^{-1}$, using Lemma \ref{localbernstein}. Rearranging, this implies
\eq{
C(A_1N_0^{-1},0)\geq A_1^{-3d}.
}
Because $N_0\geq10A_1$, we can apply Proposition \ref{nested} in the contrapositive to find
\eqn{
C(1,0)+D(1,0)\geq A_1^{-4d}.
}
Suppose first that $C(1,0)\geq\frac12A_1^{-4d}$. Using some large parameter $M$ to be specified, we split $u$ into three pieces to estimate $C(0,1)$: low frequencies
\eqn{
\|P_{<M^{-1}}u\|_{L_{t,x}^{2+\frac4{d+1}}(Q(1))}&\lesssim\|P_{<M^{-1}}u\|_{L_{t,x}^\infty([-1,0]\times\Rd)}\lesssim\frac A{M},
}
intermediate frequencies
\eqn{
\|P_{M^{-1}\leq \,\cdot\,\leq M}u\|_{L_{t,x}^{2+\frac4{d+1}}(Q(1))}&\lesssim\log(M)\max_{M^{-1}\leq N\leq M}\|P_Nu\|_{L_{t,x}^{2+\frac4{d+1}}(Q(1))},
}
and high frequencies
\eqn{
\|P_{>M}u\|_{L_{t,x}^{2+\frac4{d+1}}(Q(1))}&\lesssim\sum_{N>M}\bigg(\|P_N\ulin\|_{L_{t,x}^\infty([-1,0]\times\Rd))}\\
&\quad+N^{\frac d{d+3}}\|P_N\unlin\|_{L_{t,x}^2([-1,0]\times\Rd)}^{1-\frac2{d+3}}\|P_N\unlin\|_{L_t^\infty L_x^2([-1,0]\times\Rd)}^{\frac2{d+3}}\bigg).
}
Here we have used the decomposition from Proposition \ref{decomposition} on, say, $[-2,0]\times\Rd$ followed by Lemma \ref{localbernstein} and H\"older's inequality in space and interpolation in time. For the first term, by \eqref{PNuflat},
\eqn{
\sum_{N>M}\|P_N\ulin\|_{L_{t,x}^\infty([-1,0]\times\Rd)}&\lesssim M^{-50}A^{O(1)}.
}
For the second, by H\"older's inequality, Plancherel, \eqref{unlin}, and \eqref{sharpenergy},
\eqn{
&\sum_{N>M}N^{-\frac1{d+3}}(N\|P_N\unlin\|_{L_{t,x}^2([-1,0]\times\Rd)})^{1-\frac2{d+3}}\|P_N\unlin\|_{L_t^\infty L_x^2([-1,0]\times\Rd)}^{\frac2{d+3}}\\
&\quad\lesssim M^{-\frac1{d+3}}\left(\sum_NN^2\|P_N\unlin\|_{L_{t,x}^2([-1,0]\times\Rd)}^2\right)^{\frac12-\frac1{d+3}}\left(\sum_N\|P_N\unlin\|_{L_t^\infty L_x^2([-1,0]\times\Rd)}^2\right)^{\frac1{d+3}}\nonumber\\
&\quad\leq M^{-\frac1{d+3}}A^{O(1)}.
}
Combining the above estimates, we conclude
\eqn{
\frac12A_1^{-4d}\leq C(1,0)\lesssim\frac AM+\log(M)\max_{M^{-1}\leq N\leq M}\|P_Nu\|_{L_{t,x}^\infty(Q(1))}+M^{-\frac1{d+3}}A^{O(1)}.
}
With $M=A_1^{O(1)}$, we obtain $z_1\in Q(1)$ and $N_1\in[A_2^{-1},A_2]$ such that
\eqn{
|P_{N_1}u(z_1)|\geq A_1^{-O(1)}.
}
Suppose instead that $D(1,0)\geq \frac12A_1^{-4d}$. By H\"older's inequality, Lemma \ref{localbernstein}, and \eqref{critical}, also using the fact that $p=-\Delta^{-1}\div\div(u\otimes u)$, we have
\eqn{
\|P_{<10M^{-1}}p\|_{L_{t,x}^{1+\frac2{d+1}}(Q(1))}&\lesssim \|P_{<10M^{-1}}p\|_{L_{t,x}^\infty(\Rd)}\lesssim M^{-2}A^2.
}
To handle the intermediate and high frequencies, we use the paraproduct decomposition
\eqn{
P_{\geq 10M^{-1}}(u\otimes u)&=P_{\geq 10M^{-1}}\big(2(P_{<M^{-1}}u)\odot (P_{M^{-1}\leq\,\cdot\,\leq M}u)+2u\odot(P_{>M}u)+(P_{M^{-1}\leq\,\cdot\,\leq M}u)^{\otimes 2}\big)\\
&=\Pi_1+\Pi_2+\Pi_3.
}
For the first term, by H\"older's inequality, Lemma \ref{bernstein}, and \eqref{critical},
\eqn{
\|\Delta^{-1}\div\div\Pi_1\|_{L_{t,x}^{1+\frac2{d+1}}(Q(1))}&\lesssim\|P_{<M^{-1}}u\|_{L_{t,x}^\infty([-1,0]\times\Rd)}\|P_{M^{-1}\leq\,\cdot\,\leq M}u\|_{L_t^\infty L_x^d([-1,0]\times\Rd)}\\
&\lesssim A^2M^{-1}.
}
Next, by Proposition \ref{decomposition}, H\"older's inequality, \eqref{critical}, \eqref{PNuflat}, and estimating $P_{>M}\unlin$ using Plancherel and \eqref{sharpenergy} as above, we have
\eqn{
\|\Delta^{-1}\div\div\Pi_2\|_{L_{t,x}^{1+\frac2{d+1}}(Q(1))}&\lesssim A\Big(\|P_{>M}\ulin\|_{L_{t,x}^\infty([-1,0]\times\Rd)}+\|P_{>M}\unlin\|_{L_{t,x}^{2+\frac 4{d+1}}([-1,0]\times\Rd)}\Big)\\
&\lesssim M^{-\frac1{d+3}}A^{O(1)}.
}
Finally, by H\"older's inequality, Lemma \ref{localbernstein}, and \eqref{critical},
\eqn{
\|\Delta^{-1}\div\div\Pi_3\|_{L_{t,x}^{1+\frac2{d+1}}(Q(1))}&\lesssim\|P_{M^{-1}\leq\,\cdot\,\leq M}u\|_{L_{t,x}^{2+\frac4{d+1}}([-1,0]\times B(M^2))}^2+M^{-50}A^2.
}
In total,
\eqn{
\frac12A_1^{-4d}\leq D(0,1)&\lesssim M^{-\frac12}A+M^{-\frac1{2(d+3)}}A^{O(1)}+\log(M)\max_{N\in[M^{-1},M]}\|P_{N}u\|_{L_{t,x}^\infty(Q[-1,0]\times B(M^2))}.
}
Once again with $M=A_1^{O(1)}$, we obtain $N_1$ and $z_1$ with the claimed properties. Finally we address the possibility that this $t_1$ falls in $[-A_2^{-1},0]$ instead of the desired interval. By the fundamental theorem of calculus and Lemma \ref{frequencylocalized},
\eqn{
|P_{N_1}u(t_1-A_2^{-1},x_1)|&\geq A_3^{-O(1)}-O(N_1^3A^2A_2^{-1})
}
which implies we can redefine $t_1$ to be in $[-1,-A_2^{-1}]$ while maintaining the lower bound on $|P_{N_1}u|$.
\end{proof}

\section{Carleman inequalities for unique continuation}\label{carleman}

For the reader's convenience, we begin by quoting the quantitative unique continuation Carleman inequality from \cite{tao}, which has a straightforward generalization to $d$ dimensions. 

\begin{prop}[Unique continuation]\label{uc}
With $T,r>0$, assume $u\in C^\infty([0,T]\times B(r)\to\mathbb R)$ admits the differential inequality
\eq{\label{differentialinequality}
|Lu|\leq\frac{|u|}{C_0T}+\frac{|\grad u|}{(C_0T)^{1/2}},
}
while the parameters satisfy
\eqn{
r^2\geq4000T,\quad0<t_1\leq t_0\leq\frac T{1000d}.
}
Then
\eqn{
&\int_{t_0}^{2t_0}\int_{B(r/2)}(T^{-1}|u|^2+|\grad u|^2)e^{-|x|^2/4t}dxdt\lesssim e^{-\frac{r^2}{1000t_0}}X+t_0^{\frac d2}(3et_0/t_1)^{O(r^2/t_0)}Y
}
where
\eqn{
X&:=\int_0^T\int_{B(r)}(T^{-1}|u|^2+|\grad u|^2)dxdt,\\
Y&:=\int_{B(r)}|u(0,x)|^2t_1^{-\frac d2}e^{-|x|^2/4t_1}dx.
}
\end{prop}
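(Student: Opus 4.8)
The plan is to deduce this from a Carleman inequality for the backward heat operator $L=\partial_t+\Delta$, following the three-dimensional argument of \cite{tao}, which carries over essentially verbatim: the dimension enters only through the normalization $(4\pi t)^{-d/2}$ of the heat kernel, which is the source of the factor $t_0^{d/2}$. First I would fix the Carleman weight
\[
\Phi_\alpha(t,x):=\sigma(t)^{-\alpha}(4\pi t)^{-d/2}e^{-|x|^2/4t},\qquad\sigma(t):=te^{-t/(3T)},
\]
with $\alpha\geq1$ a large parameter to be optimized at the end, together with a spatial cutoff $\chi\in C_c^\infty(B(r))$ equal to $1$ on $B(r/2)$ and satisfying $|\nabla^j\chi|\lesssim r^{-j}$, and a regularization of the lower time endpoint at $t=t_1$. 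Multiplying the differential inequality \eqref{differentialinequality} by $\Phi_\alpha^{1/2}\chi$, setting $v:=\Phi_\alpha^{1/2}\chi u$, and conjugating $L$ by the weight, I would split the conjugated operator into its self-adjoint part $S$ and skew-adjoint part $\mathcal A$ and expand
\[
\|(S+\mathcal A)v\|_{L_{t,x}^2}^2=\|Sv\|_{L_{t,x}^2}^2+\|\mathcal Av\|_{L_{t,x}^2}^2+\langle[S,\mathcal A]v,v\rangle
\]
(plus boundary terms from the integration by parts in time). The role of the factor $e^{-t/(3T)}$ in $\sigma$ is precisely to make the commutator coercive, $\langle[S,\mathcal A]v,v\rangle\gtrsim\alpha\int\Phi_\alpha\,t^{-1}\chi^2(|u|^2+t|\nabla u|^2)$ modulo lower-order errors, uniformly for $t\in[t_1,T]$; since $t\leq T$ this bounds below the weighted quantity $\int\Phi_\alpha(C_0T)^{-1}(|u|^2+T|\nabla u|^2)$ coming from the right side of \eqref{differentialinequality} once $\alpha$ and $C_0$ are large, which in particular absorbs the first-order term $|\nabla u|/(C_0T)^{1/2}$.

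Next I would dispose of the two error terms. The cutoff commutator $[\Delta,\chi]$ is supported in $B(r)\setminus B(r/2)$, where $e^{-|x|^2/4t}\leq e^{-r^2/16t}$; estimating $\sup_{|x|\geq r/2,\ t\leq T}\Phi_\alpha$ and dividing by the size $\gtrsim\sigma(2t_0)^{-\alpha}$ of $\Phi_\alpha$ on the target region $[t_0,2t_0]\times B(r/2)$ produces a term $\lesssim e^{-r^2/(1000t_0)}X$, provided $\alpha$ is chosen $\asymp r^2/t_0$; the hypothesis $r^2\geq4000T$ is what guarantees that this spatial error is exponentially small relative to the weight on the target region (rather than dominating it), and $t_0\leq T/(1000d)$ keeps the various crossterms of a definite sign on $[t_1,2t_0]$. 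The boundary contribution at $t=t_1$ is $\int_{B(r)}|u(t_1,x)|^2\Phi_\alpha(t_1,x)\,dx$; after relating $u(t_1)$ to $u(0)$ through the differential inequality (or running the argument from $t=0$ with the weight regularized at scale $t_1$) and dividing by $\sigma(2t_0)^{-\alpha}$, the ratio $\sigma(t_1)^{-\alpha}/\sigma(2t_0)^{-\alpha}\leq(3et_0/t_1)^{\alpha}=(3et_0/t_1)^{O(r^2/t_0)}$ together with the $(4\pi t)^{-d/2}$ normalizations gives exactly $t_0^{d/2}(3et_0/t_1)^{O(r^2/t_0)}Y$. Finally, using $\sigma(t)^{-\alpha}\geq\sigma(2t_0)^{-\alpha}$ on $[t_0,2t_0]$ to pass from the weighted interior integral to the unweighted one $\int_{t_0}^{2t_0}\int_{B(r/2)}(T^{-1}|u|^2+|\nabla u|^2)e^{-|x|^2/4t}$ completes the proof.

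I expect the main obstacle to be the commutator positivity with the correct $\alpha$-dependence --- in particular, producing the gradient term on the left-hand side with a constant large enough to absorb the first-order term $(C_0T)^{-1/2}|\nabla u|$ --- which is exactly what dictates the precise choice of the time weight $\sigma$ and the smallness constraints $r^2\gtrsim T$ and $t_0\lesssim T/d$. Since in \cite{tao} this computation is carried out with fully explicit constants and nothing in it is particular to $d=3$, in practice I would simply transcribe that argument, noting the few places where a factor of $d$ (rather than $3$) appears.
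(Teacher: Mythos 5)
This proposition is not proved in the paper at all: it is quoted verbatim from Tao's unique continuation Carleman inequality in \cite{tao}, with the only dimensional changes being the heat-kernel normalization $t_0^{d/2}$ in place of $t_0^{3/2}$ and the constraint $t_0\leq T/(1000d)$, exactly as you indicate. Your proposal --- conjugating $L=\partial_t+\Delta$ by the Gaussian-type weight with the $\sigma(t)=te^{-t/(3T)}$ correction, extracting positivity from the commutator, choosing $\alpha\sim r^2/t_0$, and tracking the cutoff error (giving $e^{-r^2/1000t_0}X$) and the regularized initial-time boundary term (giving $t_0^{d/2}(3et_0/t_1)^{O(r^2/t_0)}Y$) --- is precisely a transcription of Tao's argument, so it takes essentially the same route the paper relies on.
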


Clearly this Carleman inequality as written is incompatible with the geometry of Proposition \ref{slices} since $B(r)$ would have to be contained in the thin slice in order to guarantee \eqref{differentialinequality}, while simultaneously we need $r^2\gg t_0$ in order for the first error term to be suppressed. Instead we iteratively apply the Carleman inequality outward in space, starting near the vertex of the slice. The point is that as the iteration proceeds, the center for the Carleman inequality moves further in the $\theta$ direction, so $r$ can be taken to be larger, which makes the Carleman inequality stronger. Thus combining Propositions \ref{slices} and \ref{iteratedcarleman} leads to a feedback loop which leads to substantially better estimates; specifically, only $\sim\log (R_2/R_1)$ iterations of Proposition \ref{uc} (by way of Lemma \ref{onestep}) are needed\footnote{If instead one were to iterate the Carleman inequality through a region of the form $Q_0\times\mathbb R^{d-k}$ for some small $Q_0\subset\mathbb R^k$, one would need a number of iterations on the order of $R_2/R_1$. This would lead to an extra exponential in the vorticity lower bound, which would in turn require us to ensure a much smaller error when the backward uniqueness Carleman inequality is applied in the proof of Proposition \ref{prop2}. It would be necessary then to find a much larger annulus of regularity in Proposition \ref{annuli} which would result (rather unsatisfyingly) in tower exponential bounds in Theorem \ref{regularitythm}.} to propagate a concentration from length scale $R_1$ to $R_2$.

\begin{prop}[Iterated unique continuation Carleman inequality]\label{iteratedcarleman}
Suppose $T_1>0$, $0<\eta\leq C_0^{-1}$, and $u$ is smooth on $S$ with
\eq{\label{reg}
\|\grad^ju\|_{L_{t,x}^\infty(S)}\leq (\eta T_1)^{-1-\frac{j}2},\quad|Lu|\leq\frac{|u|}{C_0\eta T_1}+\frac{|\grad u|}{(C_0\eta T_1)^\frac12}\quad\forall(t,x)\in S
}
for $j=0,1$, where, for some direction $\theta\in S^{d-1}$,
\eqn{
S=[-\eta T_1,0]\times\{x\in\Rd:|x|>10T_1^\frac12,\,\dist(x,\mathbb R_+\theta)\leq \eta|x\cdot\theta|\}.
}
Moreover, assume that for every $t\in[-\eta T_1,0]$, we have
\eqn{
\int_{B(R_0\theta,\eta^{5}R_0)}|u(t)|^2dx\geq\epsilon T_1^{\frac d2-2}
}
where $20T_1^\frac12\leq R_0\leq \eta^{-2}T_1^\frac12$ and $\epsilon\leq \eta^8$. Then for every $t\in[-
\eta T_1/2,0]$ and $R\geq 2R_0$, we have
\eqn{
\int_{B(R\theta,\eta^{5}R)}|u(t)|^2dx\geq \epsilon^{(R/R_0)^{\eta^{-4}}}T_1^{\frac d2-2}.
}

\end{prop}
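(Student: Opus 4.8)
The plan is to iterate Proposition~\ref{uc} along a sequence of balls marching outward in the $\theta$ direction, exploiting the fact that once we are at distance $R$ from the origin the slice $S$ contains a spatial ball of radius $\sim\eta R\gg T_1^{1/2}$, so the ``large-$r$'' hypothesis $r^2\geq 4000\,(\eta T_1)$ of the Carleman inequality is satisfied with enormous room to spare. Concretely, set $R_k=2^k R_0$ and let $x_k=R_k\theta$. At stage $k$ we will have a lower bound of the form
\eq{\label{pf-ind}
\int_{B(x_k,\,\eta^5 R_k)}|u(t)|^2\,dx\;\geq\;\epsilon_k\,T_1^{\frac d2-2}\qquad\text{for all }t\in[-\eta T_1/2,0],
}
with $\epsilon_0=\epsilon$ (this is the hypothesis, on the slightly larger interval $[-\eta T_1,0]$, which only helps), and we want to show $\epsilon_{k+1}\geq \epsilon_k^{\,\kappa}$ for a fixed $\kappa=\kappa(\eta)$ of size roughly $\eta^{-O(1)}$; iterating $\log_2(R/R_0)$ times then gives $\epsilon^{\kappa^{\log_2(R/R_0)}}=\epsilon^{(R/R_0)^{\log_2\kappa}}$, and choosing the constants so that $\log_2\kappa\leq\eta^{-4}$ yields the claimed bound (after noting $\epsilon<1$ makes raising the exponent only weaken the estimate).

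For the inductive step I would apply Proposition~\ref{uc} centered at $x_k$, after translating $x_k$ to the origin. Take the Carleman radius $r\sim\eta R_k$ (so that $B(x_k,r)\subset S_x$, using the aperture-$\eta$ cone condition and $R_k\geq 2R_0\geq 40T_1^{1/2}$), the Carleman time parameter $T=\eta T_1$, and $t_0\sim\eta T_1$, $t_1\sim\eta^{10}T_1$ chosen so that the Gaussian weight $t_1^{-d/2}e^{-|x|^2/4t_1}$ on $B(x_{k+1}-x_k,\eta^5 R_{k+1})=B(R_k\theta,\,2\eta^5 R_k)$ is bounded below by a constant (note $|R_k\theta|=R_k$ and we need $R_k^2\lesssim t_1\sim\eta^{10}T_1$, which forces $R_k\lesssim \eta^5 T_1^{1/2}$ --- so in fact the right choice is to run the Carleman estimate with its \emph{spatial} origin at the midpoint between $x_k$ and $x_{k+1}$, or equivalently to center the output ball $B(x_{k+1},\eta^5R_{k+1})$ so that it sits within distance $\sim\eta^5 R_k$ of the Carleman center; I would set up the geometry so that the separation between the Carleman center and the target ball is $O(\eta^5 R_k)$ while $r\sim\eta R_k$, keeping $e^{-r^2/1000 t_0}$ genuinely small). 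The differential inequality $|Lu|\leq |u|/(C_0\eta T_1)+|\grad u|/(C_0\eta T_1)^{1/2}$ and the derivative bounds on $u$ are exactly hypothesis~\eqref{reg}. The conclusion of Proposition~\ref{uc} then reads
\eq{\label{pf-carleman}
\int_{\text{target}}\!\!\big((\eta T_1)^{-1}|u|^2+|\grad u|^2\big)e^{-|x|^2/4t}\,dx\,dt\;\lesssim\;e^{-r^2/(1000 t_0)}X+t_0^{d/2}(3e t_0/t_1)^{O(r^2/t_0)}Y,
}
where $Y\gtrsim\epsilon_k T_1^{d/2-2}\cdot(\eta T_1)^{-1}$ after inserting~\eqref{pf-ind} at an appropriate time slice and using the lower bound on the Gaussian, while $X\lesssim \|u\|_{L^\infty}^2(\eta T_1)^{-1}\cdot |B(r)|\cdot(\eta T_1)\lesssim (\eta T_1)^{-d/2-1}T_1^{?}$ --- i.e.\ $X$ is an absolute power of $\eta$ and $T_1$ by the $L^\infty$ bounds in~\eqref{reg} --- and the left side is bounded below by $c\,\epsilon_{k+1}T_1^{d/2-2}(\eta T_1)^{-1}$ from~\eqref{pf-ind} at stage $k+1$, except we run the argument in the reverse direction: \eqref{pf-carleman} gives a \emph{lower} bound on $Y$, hence on $\epsilon_k$, in terms of the quantity at an interior time, which after unwinding produces $\epsilon_{k+1}\geq (\text{const})^{O(r^2/t_0)}\epsilon_k^{O(r^2/t_0)}\geq\epsilon_k^{\eta^{-O(1)}}$ once $r^2/t_0\sim \eta R_k^2/T_1\le\eta^{-O(1)}$ (using $R_k\le\eta^{-2}T_1^{1/2}$ only at the first steps; for larger $R_k$ one does not in fact propagate --- wait, this needs care, see below).

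The main obstacle --- and the place where the statement's exponent $(R/R_0)^{\eta^{-4}}$ rather than something like $2^{R/R_0}$ comes from --- is controlling the number of iterations and the growth of the Carleman constant $(3et_0/t_1)^{O(r^2/t_0)}$ per step. The key quantitative point is that $r^2/t_0$ should stay \emph{bounded} (of size $\eta^{-O(1)}$) at every step rather than growing with $R_k$: this is why the Carleman estimate must be applied with $T$, $t_0$, $r$ all scaling \emph{together} like the local scale $\eta R_k$ (i.e.\ one renormalizes at each step so that the current concentration ball is at unit scale), not with a fixed $T_1$. With $r^2/t_0\sim\eta^{-O(1)}$ fixed, each step costs a fixed power $\kappa=\eta^{-O(1)}$ in the exponent of $\epsilon$, and since we need $\log_2(R/R_0)$ steps to go from $R_0$ to $R$, the exponent multiplies up to $\kappa^{\log_2(R/R_0)}=(R/R_0)^{\log_2\kappa}$; choosing $C_0$ large enough that $\log_2\kappa\le\eta^{-4}$ gives the claim. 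The second delicate point is the propagation of the hypotheses from one renormalized scale to the next: one must check that the derivative bounds and the differential inequality in~\eqref{reg}, which are stated uniformly on all of $S$, survive the rescaling (they do, since they are scale-invariant in exactly the right way --- $\|\grad^j u\|_\infty\lesssim(\eta T_1)^{-1-j/2}$ and on the piece of $S$ at distance $\sim R_k$ the natural time scale is $\eta T_1$ throughout, not $\eta R_k^2/T_1$, so actually the renormalization is in \emph{space only}, and I would carry out the whole iteration keeping the time interval $[-\eta T_1/2,0]$ fixed and only the spatial balls growing, which is consistent with $r^2\ge 4000 T$ precisely because $r\sim\eta R_k\gg T_1^{1/2}\ge (\eta T_1)^{1/2}$ for $R_k$ large). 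Finally, one checks that passing from the time-integrated conclusion~\eqref{pf-carleman} back to a pointwise-in-$t$ lower bound~\eqref{pf-ind} at stage $k+1$ costs only harmless constants, using the energy inequality / continuity of $t\mapsto\int|u(t)|^2$ on balls together with the uniform bounds in~\eqref{reg}, exactly as in the analogous step in \cite{tao}.
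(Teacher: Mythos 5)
Your high-level strategy---marching the concentration outward along the axis of the slice by iterating Proposition \ref{uc}, with the Carleman radius growing like $\eta R_k$---is the same as the paper's, but the quantitative design of the step has a genuine gap: the doubling step $R_{k+1}=2R_k$ cannot be realized by a single application of Proposition \ref{uc}. Since the slice has angular aperture $\eta$, a Carleman ball centered on the axis at distance $\sim R_k$ from the origin must have radius $r\lesssim\eta R_k$. In one application of the Carleman inequality, the old concentration ball has to lie inside $B(\mathrm{center},r/2)$, and its contribution to the left-hand side carries the Gaussian penalty $e^{-\Delta^2/4t_0}$, where $\Delta$ is its distance to the Carleman center; for the conclusion to be nonvacuous this must dominate the error suppression $e^{-r^2/1000t_0}$ multiplying $X$, which forces $\Delta$ to be a small fraction of $r\lesssim\eta R_k$. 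With $\Delta\sim R_k$ (center at $x_{k+1}$ or at the midpoint), the old ball is either outside $B(r)$ entirely or its Gaussian weight $e^{-cR_k^2/t_0}$ is astronomically smaller than $e^{-r^2/1000t_0}$, so the $X$ term swamps the main term and no lower bound on $Y$ follows. Your attempted patch---keeping $r^2/t_0$ of size $\eta^{-O(1)}$ by rescaling ``$T$, $t_0$, $r$ together''---is inconsistent with your own correct observation that the time scale cannot be renormalized: the slice only gives regularity on a time window of length $\eta T_1$, so $t_0\lesssim\eta T_1$ while $r\sim\eta R_k$ grows without bound, and this tension is never resolved. (Your sketch also reverses the roles of $Z$ and $Y$ at one point, speaking of a lower bound ``on $\epsilon_k$''; the correct direction is that the stage-$k$ concentration, at earlier times, feeds the left-hand side $Z$, and the stage-$(k{+}1)$ concentration is extracted from $Y$ at the later time after truncating the $t_1$-Gaussian.)

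The paper's Lemma \ref{onestep} resolves exactly this issue by taking tiny multiplicative increments $R'=(1+\eta^3)R$ (displacement $\eta^3R$, comfortably inside $r=\eta^2R$) and by tying the Carleman times to the current lower bound, $t_0=\eta^5R^2\log^{-1}\frac1{\epsilon_0}$ and $t_1=\eta^{15}R^2\log^{-1}\frac1{\epsilon_0}$, so that the displacement penalty is only $\epsilon_0^{O(\eta)}$, the $X$ term is suppressed by $\epsilon_0^{\eta^{-1}/1000}$, and the prefactor $(3et_0/t_1)^{O(r^2/t_0)}$ costs a fixed power of $\epsilon_0$; each step then raises $\epsilon$ to the power $\eta^{-2}$, and since about $\eta^{-3}\log(R/R_0)$ steps (not $\log_2(R/R_0)$) are needed, the total exponent is $(R/R_0)^{\eta^{-4}}$. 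This is also where the hypotheses $\epsilon_0\leq(R^2/T_1)^{-50d\eta}$ and $\epsilon_0\leq e^{-2000d\eta^4R^2/T_1}$ and the bookkeeping of the shrinking time intervals $[-a_kT_1,0]$ enter (the pointwise-in-$t$ conclusion comes for free by letting the Carleman time origin range over the shrunken interval, rather than by a continuity argument). Without replacing your doubling step by an increment of size $O(\eta^3 R_k)$ and redoing the per-step cost accounting accordingly, the proposed proof does not go through.
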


Given the following lemma, Proposition \ref{iteratedcarleman} will follow by iteration.

\begin{lem}\label{onestep}
Assume $u$, $T_1$, and $\eta$ are as in Proposition \ref{iteratedcarleman} and that there is some $R\geq20T_1^\frac12$ and $a\in(\frac12,1)$ such that for every $t\in[-aT_1,0]$,
\eqn{
\int_{B(R\theta,\eta^5R)}|u(t)|^2dx\geq\epsilon_0 T_1^{\frac d2-2}
}
where
\eqn{
\epsilon_0\leq\min(\eta^8,(R^2/T_1)^{-50d\eta},e^{-2000d\hspace{.003in}\eta^4R^2/T_1}).
}
Then for every $t\in[-aT_1+2\eta^5R^2\log^{-1}\frac1{\epsilon_0},0]$,
\eqn{
\int_{B(R'\theta,\eta^5R')}|u(t)|^2dx\geq \epsilon_0^{\eta^{-2}} T_1^{\frac d2-2}
}
where $R':=(1+\eta^3)R$.
\end{lem}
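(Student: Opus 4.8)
The plan is to apply the quantitative unique continuation Carleman inequality of Proposition \ref{uc} in a single ball centered at $R\theta$, with the radius of that ball chosen proportional to $R$ (a fixed multiple of $\eta^3 R$, say), so that one step moves the concentration from the ball $B(R\theta,\eta^5R)$ to the ball $B(R'\theta,\eta^5R')$ with $R'=(1+\eta^3)R$. First I would rescale so that the relevant time length is normalized; one sets $T=2a T_1$ (or a comparable multiple of $\eta T_1$ small enough that the differential inequality \eqref{reg} is of the form \eqref{differentialinequality} after rescaling $T\mapsto$ the $C_0\eta T_1$ appearing there — this forces the Carleman "$T$" to be $\sim\eta T_1$). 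The Carleman ball $B(r)$ is taken centered at $x_R:=R\theta$ with radius $r\sim\eta^3 R$; one checks that this entire ball, over the time slab, lies inside the slice $S$ because $\dist(x,\mathbb R_+\theta)\le \eta|x\cdot\theta|$ is satisfied with room to spare when $|x-x_R|\lesssim \eta^3R$ and $|x_R\cdot\theta|=R$, and because $|x|>10T_1^{1/2}$ holds since $R\ge 20T_1^{1/2}$. Crucially, the geometric condition $r^2\ge 4000\,(C_0\eta T_1)$ of Proposition \ref{uc} becomes $\eta^6R^2\gtrsim \eta T_1$, i.e. $R^2\gtrsim \eta^{-5}T_1$; this is where the hypothesis $R\ge 20T_1^{1/2}$ combined with the exponential factors in $\epsilon_0$ must be used — if $R$ is only comparable to $T_1^{1/2}$ one instead absorbs the resulting loss into the smallness budget on $\epsilon_0$ (the $(R^2/T_1)^{-50d\eta}$ and $e^{-2000d\eta^4R^2/T_1}$ factors are exactly tailored for this).

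The execution then has three moving parts. \textbf{(i) Lower bound for $Y$:} the hypothesis gives $\int_{B(x_R,\eta^5R)}|u(0)|^2\,dx\ge \epsilon_0 T_1^{d/2-2}$ (after recentering the origin of the Carleman inequality at $x_R$, so "$u(0,x)$" there means $u$ at the top time on the ball around $x_R$); choosing the Carleman parameter $t_1$ comparable to $(\eta^5R)^2$ makes $t_1^{-d/2}e^{-|x-x_R|^2/4t_1}\gtrsim t_1^{-d/2}$ on $B(x_R,\eta^5R)$, so $Y\gtrsim (\eta^5R)^{-d}\,\epsilon_0 T_1^{d/2-2}$. One needs $t_1\le t_0\le T/(1000d)$, which is arranged by taking $t_0\sim \eta^5R^2\log^{-1}\tfrac1{\epsilon_0}$ — note this is exactly the quantity appearing in the conclusion's time interval, and the $\log^{-1}\tfrac1{\epsilon_0}$ factor ensures $t_0$ stays below $\sim\eta T_1$ because of the exponential-smallness hypothesis $\epsilon_0\le e^{-2000d\eta^4R^2/T_1}$. \textbf{(ii) Upper bound for $X$:} using the pointwise bounds $\|\grad^ju\|_{L^\infty(S)}\le(\eta T_1)^{-1-j/2}$ one gets $X\lesssim (\eta T_1)^{-1}\big((\eta T_1)^{-1}+(\eta T_1)^{-1}\big)|B(r)|\cdot(\eta T_1)\lesssim (\eta T_1)^{-1}r^d$, a harmless power of $\eta,T_1,R$. \textbf{(iii) Combining:} the first error term $e^{-r^2/1000t_0}X$ is superexponentially small because $r^2/t_0\sim \eta^6R^2/(\eta^5R^2\log^{-1}\tfrac1{\epsilon_0})=\eta\log\tfrac1{\epsilon_0}$, so $e^{-r^2/1000t_0}\le \epsilon_0^{c\eta}$, which beats the polynomial $X$ by the $(R^2/T_1)^{-50d\eta}$ budget; the second error term has the factor $(3et_0/t_1)^{O(r^2/t_0)}$, and since $t_0/t_1\sim \log^{-1}\tfrac1{\epsilon_0}\cdot\eta^{-5}\cdot$ (bounded) and $r^2/t_0\sim \eta\log\tfrac1{\epsilon_0}$, this factor is $\exp(O(\eta\log\tfrac1{\epsilon_0}\cdot\log(\ldots)))$, which one checks is $\le \epsilon_0^{-O(\eta)}$. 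The left-hand side $\int_{t_0}^{2t_0}\int_{B(r/2)}(\ldots)e^{-|x-x_R|^2/4t}\,dxdt$ is bounded below, after throwing away the gradient term and using $t\sim t_0\sim\eta^5R^2\log^{-1}\tfrac1{\epsilon_0}$ on the time interval and $e^{-|x-x_R|^2/4t}\gtrsim$ const on $B(x_R,\eta^5R')\subset B(x_R,r/2)$ (here $\eta^5R'=\eta^5(1+\eta^3)R< r/2$ for $r\sim\eta^3R$ — one must check the constant in $r$ is chosen so that $\eta^5R'$ fits), by $\gtrsim t_0\cdot t_0^{-1}\cdot\inf_{t\in[t_0,2t_0]}\int_{B(x_R,\eta^5R')}|u(t)|^2\,dx\cdot e^{-O(1)}$; since every $t$ in the conclusion's interval $[-aT_1+2\eta^5R^2\log^{-1}\tfrac1{\epsilon_0},0]$ corresponds to a top-time for which $[t-2t_0,t]$ sits inside $[-aT_1,0]$ where the hypothesis holds (this is why the time interval shrinks by exactly $2t_0$), one may run the whole inequality with top time $t$ and read off $\int_{B(x_R,\eta^5R')}|u(t)|^2\,dx\gtrsim$ (LHS) $\gtrsim \epsilon_0^{O(\eta)}T_1^{d/2-2}$, and since $O(\eta)\le \eta^{-2}$ for $\eta\le C_0^{-1}$, rounding the exponent up to $\eta^{-2}$ gives the claimed $\epsilon_0^{\eta^{-2}}T_1^{d/2-2}$, with all the polynomial-in-$(\eta,R,T_1)$ prefactors absorbed by the smallness budget $\epsilon_0\le(R^2/T_1)^{-50d\eta}$ together with $\epsilon_0\le\eta^8$.

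The main obstacle, as usual with these iterated Carleman arguments, is bookkeeping the three competing scales — the Carleman radius $r\sim\eta^3R$, the "time" $T\sim\eta T_1$, and the parameters $t_0\sim\eta^5R^2\log^{-1}\tfrac1{\epsilon_0}$, $t_1\sim\eta^{10}R^2$ — so that simultaneously: (a) the ball fits in the slice, (b) $r^2\ge 4000 T$ and $t_1\le t_0\le T/(1000d)$, (c) the first error term is crushed, and (d) the second error term's base-exponent product $r^2/t_0\cdot\log(t_0/t_1)$ stays of size $O(\eta\log\tfrac1{\epsilon_0})$ rather than something larger. Conditions (b) and (d) are in tension with making $R$ small, and they are precisely why the hypothesis includes the two technical smallness constraints $\epsilon_0\le(R^2/T_1)^{-50d\eta}$ and $\epsilon_0\le e^{-2000d\eta^4R^2/T_1}$; verifying that these exact exponents suffice (and tracking that $d$-dependent constants do not blow up the budget) is the delicate part. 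Everything else — the lower bound on $Y$, the crude $L^\infty$-based upper bound on $X$, and the final extraction of the ball integral from the weighted spacetime integral on the left — is routine once the parameter choices are pinned down.
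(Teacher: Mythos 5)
There is a genuine gap, and it is structural rather than a matter of bookkeeping. Proposition \ref{uc} is an inequality of the form $Z\lesssim e^{-r^2/1000t_0}X+t_0^{d/2}(3et_0/t_1)^{O(r^2/t_0)}Y$, in which $u$ at the top time (Carleman time $0$) enters only through $Y$, while the times $[t_0,2t_0]$ enter through $Z$. The only way to use it for this lemma is: feed the hypothesis (concentration at the earlier times $t'-2t_0\le s\le t'-t_0$, which lie in $[-aT_1,0]$) into a \emph{lower} bound on $Z$, bound the $X$-term above, conclude a lower bound on the $Y$-term, and finally use the smallness of $t_1$ to show the Gaussian weight annihilates the contribution to $Y$ from outside the target ball, so the mass of $u(t')$ must sit inside it. Your step (i), a lower bound on $Y$ from the hypothesis, is vacuous ($Y$ sits on the right-hand side of the inequality), and your step (iii), ``read off $\int_{B(x_R,\eta^5R')}|u(t)|^2\,dx\gtrsim$ (LHS)'', is not an implication the inequality supports: $u$ at the top time does not appear in the left-hand side at all. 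Relatedly, your claimed final exponent $\epsilon_0^{O(\eta)}$ is too good to be true; the unavoidable loss is the factor $(3et_0/t_1)^{O(r^2/t_0)}$ multiplying $Y$, which with $r^2/t_0\sim\eta^{-1}\log\frac1{\epsilon_0}$ costs about $\epsilon_0^{-O(\eta^{-1}\log\eta^{-1})}$ and is precisely why the lemma only claims $\epsilon_0^{\eta^{-2}}$.

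Second, and decisively, you center the Carleman ball at the old point $R\theta$. Even with the logic corrected, you would conclude concentration of $u(t')$ in a small ball around $R\theta$, whereas the lemma asserts concentration in $B(R'\theta,\eta^5R')$; these balls are disjoint (centers $\eta^3R$ apart, radii $\approx\eta^5R$), and without the outward displacement the iteration in Proposition \ref{iteratedcarleman} has nothing to iterate. The paper's proof centers the inequality at the \emph{advanced} point: it applies Proposition \ref{uc} to $(t,x)\mapsto u(t'-t,x+R'\theta)$ with $r=\eta^2R$, $t_0=\eta^{5}R^2\log^{-1}\frac1{\epsilon_0}$, $t_1=\eta^{15}R^2\log^{-1}\frac1{\epsilon_0}$, and time window $T_c=\min(\eta/2,\eta^5R^2)$. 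Then $B(R\theta,\eta^5R)\subset B(R'\theta,r/2)$ lies at distance $\approx\eta^3R$ from the center, so the Gaussian weight at times $\ge t_0$ costs only $\epsilon_0^{O(\eta)}$ and $Z\gtrsim\epsilon_0^{2}$, while the very small $t_1$ (much smaller than your $t_1\sim\eta^{10}R^2$) makes the part of $Y$ with $|x-R'\theta|>\eta^5R'$ of size $\epsilon_0^{\eta^{-5}/4-\eta^{-2}}\ll\epsilon_0^2$, forcing the mass into $B(R'\theta,\eta^5R')$. Note also that your parameter choices violate the hypothesis $r^2\ge4000T$ of Proposition \ref{uc} in the regime $R\sim T_1^{1/2}$ (which does occur, since only $R\ge20T_1^{1/2}$ is assumed): with $r\sim\eta^3R$ and $t_0\sim\eta^{5}R^2\log^{-1}\frac1{\epsilon_0}\le T/(1000d)$ one needs $\log\frac1{\epsilon_0}\gtrsim d\,\eta^{-1}$, which the stated smallness conditions on $\epsilon_0$ do not supply when $R^2/T_1=O(1)$; a failed structural hypothesis of the Carleman inequality cannot be ``absorbed into the smallness budget.'' The correct fix is the paper's: enlarge $r$ to $\eta^2R$ and shrink the Carleman time window to $T_c=\min(\eta/2,\eta^5R^2)$.
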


\begin{proof}[Proof of Proposition \ref{iteratedcarleman}]
Let us normalize $T_1=1$. One iterates Lemma~\ref{onestep} on the time intervals $[-a_k,0]$ for $k=0,1,\ldots,n$, where $n=\lceil\log_{1+\eta^3}(R/R_0)\rceil$. Specifically, the $k$th application of the lemma is centered at the point $R_k\theta\in\Rd$ and uses the lower bound $\epsilon_k$, where
\eqn{
\epsilon_k=\epsilon^{\eta^{-2k}},\quad R_k=R_0(1+\eta^3)^k,\quad a_k=\eta-2\sum_{i=0}^k\eta^5R_i^2\log^{-1}\frac1{\epsilon_i}.
}
One computes that
\eqn{
a_k&=\eta-2\eta^5R_0^2\log^{-1}\frac1\epsilon\sum_{i=0}^k(\eta+\eta^4)^{2i}\geq\eta-4\eta^{5}R_0^2\log^{-1}\frac1\epsilon.
}
Recall that $R_0\leq\eta^{-2}$ and $\epsilon\leq\eta^8$. Thus, with $\eta$ sufficiently small, $a_k\geq\frac\eta2$ so the claimed bound holds on $[-\frac\eta2,0]$. The final lower bound resulting from the iteration is given by
\eqn{
\epsilon_n=\epsilon^{\eta^{-2\lceil\log_{1+\eta^3}(R/R_0)\rceil}}\geq\epsilon^{\eta^{-2}(R/R_0)^{\frac{\log\eta^{-2}}{\log(1+\eta^3)}}}.
}
With $\eta$ sufficiently small, we have $\frac{\log\eta^{-2}}{\log(1+\eta^3)}\leq\eta^{-\frac72}$ and $\eta^{-2}\leq(R/R_0)^{\eta^{-\frac12}}$, using that $R\geq2R_0$. Thus $\epsilon_n\geq\epsilon^{(R/R_0)^{\eta^{-4}}}$ as claimed.
\end{proof}

\begin{proof}[Proof of Lemma \ref{onestep}]
Again, we rescale so that $T_1=1$. Fix any $t'\in[-a+2\eta^5R^2\log^{-1}\frac1{\epsilon_0},0]$. We apply Proposition \ref{uc} to the function
\eqn{
(t,x)\mapsto u(t'-t,x+R'\theta)
}
on the time interval $[0,T_c]$ with the parameters
\eqn{
T_c=\min(\eta/2,\eta^5R^2),\quad r=\eta^2R,\quad t_0=\eta^{5}R^2\log^{-1}\frac1{\epsilon_0},\quad t_1=\eta^{15}R^2\log^{-1}\frac1{\epsilon_0}.
}
Clearly \eqref{reg} implies \eqref{differentialinequality} is satisfied. Consider the three terms in the Carleman inequality which takes the form $Z\leq X+Y$. For the left-hand side, since $B(R'\theta,\eta^2R/2)\supset B(R\theta,\eta^5R)$,
\eqn{
Z&\gtrsim t_0T_c^{-1}\int_{B(R'\theta,\eta^2R/2)}|u|^2e^{-|x-R'\theta|^2/4t_0}dx\gtrsim\max(\eta^4R^2,1)\log^{-1}\Big(\frac1{\epsilon_0}\Big)\epsilon_0^{1+\eta^5/4}\geq \epsilon_0^2,
}
using that $\eta$ and $\epsilon_0$ are small, $R\geq20$, and $\eta\geq\epsilon_0^\frac18$. Next, by \eqref{reg},
\eqn{
X\leq\epsilon_0^{\eta^{-1}/1000}\eta^{2+2d}R^d
}
which is negligible compared to $Z$ due to the constraint $\eta\leq C_0^{-1}$. For the remaining term in the Carleman inequality,
\eqn{
Y\leq\epsilon_0^{-\eta^{-2}}\int_{|x-R'\theta|\leq \eta^2R}|u(t',x)|^2e^{-|x-R'\theta|^2/4t_1}dx.
}
By \eqref{reg}, the contribution to this term from the region where $|x-R'\theta|>\eta^5R'$ is negligible compared to $Z$:
\eqn{
\epsilon_0^{-\eta^{-2}}\int_{\eta^6R'<|x-R'\theta|\leq\eta^2R}|u(t',x)|^2e^{-|x-R'\theta|^2/4t_1}dx&\lesssim\eta^{2d-2}R^d\epsilon_0^{\eta^{-3}/4-\eta^{-2}}\ll\epsilon_0^2,
}
using that $\epsilon_0\leq R^{-100d\eta}$. Thus $Z$ is bounded by the contribution to $Y$ from $B(R'\theta,\eta^6R')$ which proves the lemma.
\end{proof}

\section{Main propositions}\label{mainprop}

Next we proceed to the main propositions of this paper. The philosophy is similar to \cite{tao} but as discussed in Sections \ref{introduction} and \ref{tools}, in higher dimensions we do not have quantitative epochs of regularity. As a result, given a spacetime point where $u$ has a high frequency concentration, it is far from clear that the vorticity lower bound implied by Proposition \ref{bp} intersects at all with a spacetime region where the solution is regular, let alone an entire epoch $I\times\Rd$ as in the three-dimensional case. From a qualitative perspective, since $\omega$ is locally $L_{t,x}^2$ and the measure of the spacetime set where $|\grad^ju|\lesssim\ell^{-1-j}$ shrinks to zero as $\ell\to0$, there must be some small $\ell$ and cylinders $Q'\subset\subset Q$ of length $\sim\ell$ such that $|\grad^ju|\lesssim \ell^{-1-j}$ holds in $Q$ while $\int_{Q'}|\omega|^2dxdt$ is bounded from below (see Figure \ref{fig1}). The problem is that in order to prove a quantitative theorem, we need an effective lower bound on this $\ell$.

As one sees in the proof of Proposition \ref{prop1}, the worst-case scenario is that at each small scale $\ell$, there are $\sim\ell^{-d+2}$ parabolic cylinders of length $\sim\ell$ where $\int|\omega|^2dxdt\gtrsim \ell^{d-2}$, and in the complement the solution obeys $|\grad^ju|\lesssim\ell^{-1-j}$. At each scale, this fractal configuration is consistent with the energy inequality. We rule out this scenario in dimensions $d\geq5$ by applying the improved energy bound \eqref{maximalregularity} at a sufficiently small scale $\ell=A^{-C}$. In $d=4$ we cannot quite use this improvement and are forced to take $\ell$ as small as $\exp(-A^C)$. Here the idea is that each scale $\ell$ contributes roughly a fixed amount to the energy. A significant fraction of the contribution comes from the frequencies around $\ell^{-1}$, so by summing over many scales we can contradict this scenario.

Note that the exponential smallness of $\ell$ when $d=4$ does not affect the final estimates because it contributes in parallel with exponentials appearing at other points in the argument.\footnote{It is conceivable that the $d\geq5$ case can be handled using the same energy pigeonholing approach, although it is less straightforward because of the spatial overlaps of the concentrations caused by the fact that $Q'$ is a factor $\delta$ smaller than $Q$. As a result $\ell$ would depend exponentially on $\delta^{-1}$ which would cause problems in the proof of Proposition \ref{prop2}, as the smallness of $\delta$ is necessary to create favorable geometry for the Carleman estimates. It is preferable for other reasons to have $\ell$ depend polynomially on $A$; for example see Remark \ref{remark}.}

\begin{prop}[Backward propagation into a regular region]\label{prop1}
Suppose $u$ is a classical solution of \eqref{ns} on $[t_0-T,t_0]$ satisfying \eqref{critical}, and that there are $x_0\in\Rd$ and $N_0>0$ such that at the point $z_0=(t_0,x_0)$,
\eqn{
|P_{N_0}u(z_0)|\geq A_1^{-1}N_0.
}
Then for any $T_1\in[A_1^2N_0^{-2},T/100]$, there exist $\ell>0$ and $Q=Q(z_0',\ell/2)\subset[-T_1,-A_2^{-1}T_1]\times B(A_3T_1^\frac12)$ such that
\eq{\label{finalbounds}
\|\grad^ju\|_{L_{t,x}^\infty(Q)}\leq A_2^{-1}\ell^{-j-1}
}
for $j=0,1,2$ and
\eq{
\|\omega\|_{L_{t,x}^2(Q')}\geq A_3^{-O(1)}(\delta\ell)^{\frac d2+1}T_1^{-1}\label{finalconc}
}
where $Q'=Q(z_0'-(\ell^2/8,0),\delta\ell)$. We may take $\delta=A_4^{-1}$ and $\ell$ such that
\begin{equation}\begin{aligned}\label{ell}
\ell&\in[\exp(-A_4),A_4^{-1}],&d=4,\\\ell&=A_4^{-2d-1},&d\geq5.
\end{aligned}\end{equation}
\end{prop}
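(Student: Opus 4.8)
After translating and parabolically rescaling we may assume $z_0=0$ and $T_1=1$; the hypothesis $T_1\geq A_1^2N_0^{-2}$ then reads (after harmlessly enlarging constants) $N_0\geq 10A_1$, so Proposition \ref{bp} produces $z_1\in[-1,-A_2^{-1}]\times B(A_2)$ and $N_1\in[A_2^{-1},A_2]$ with $|P_{N_1}u(z_1)|\geq A_2^{-1}$. The first step is to convert this velocity concentration into a lower bound on the local enstrophy. By Lemma \ref{frequencylocalized} the bound $|P_{N_1}u|\geq\tfrac12A_2^{-1}$ persists on a cylinder $Q(z_1,\rho_0)$ with $\rho_0=A_3^{-O(1)}$; since $u$ is divergence-free, the Biot--Savart law gives $\|P_{N_1}\omega(t)\|_{L_x^2(\Rd)}\sim N_1\|P_{N_1}u(t)\|_{L_x^2(\Rd)}$ at each fixed time, and (using that $u$ is a classical solution so $\omega\in L^2$, though no quantitative use is made of its size) a standard Littlewood--Paley localization yields a cylinder $\Omega_0\subset[-1,-A_2^{-1}]\times B(A_3)$ of radius $A_3^{-O(1)}$ with $\int_{\Omega_0}|\omega|^2\geq A_3^{-O(1)}$. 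Next I would invoke Proposition \ref{decomposition} on $[-2,0]$ with time parameter $1$, writing $u=\ulin+\unlin$ on $[-1,0]$: here $\ulin$ is smooth with $\|\grad^j\ulin\|_{L_{t,x}^\infty}\leq A^{O(1)}$ while $\unlin$ is divergence-free and enjoys the bounds \eqref{unlin}, \eqref{sharpenergy}, \eqref{maximalregularity}. Because $\|\wlin\|_{L_{t,x}^\infty}\leq A^{O(1)}$, on any cylinder of radius $\ell\leq A^{-O(1)}$ the linear vorticity $\wlin$ is negligible relative to the $L^2$ quantities at play; so the enstrophy lower bound may be transferred to $\grad\unlin$, and the sought concentration \eqref{finalconc} will follow once we produce a lower bound for $\|\grad\unlin\|_{L_{t,x}^2(Q')}$.

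The core of the proof is to find the good scale $\ell$. I argue by contradiction, assuming that for \emph{every} admissible $\ell$ and every position no cylinder $Q$ of radius $\ell$ satisfies \eqref{finalbounds} together with the concentration \eqref{finalconc} in its designated subcylinder $Q'=Q(z'-(\ell^2/8,0),\delta\ell)$, $\delta=A_4^{-1}$. Fixing such an $\ell$, cover $\Omega_0$ by cylinders $Q(z_i',\ell)$ whose subcylinders $Q_i'$ tile $\Omega_0$ up to bounded overlap (so the $Q(z_i',\ell)$ overlap at most $A_4^{O(1)}$ times). For each $i$, either \eqref{finalbounds} fails on $Q(z_i',\ell)$ -- in which case Propositions \ref{nested} and \ref{regularity} in the contrapositive force $C(Q'')+D(Q'')>A_3^{-1}$ on some subcylinder $Q''$ of radius $\sim\ell$, whence the local energy estimates, \eqref{ulin}, and propagation of the pointwise concentration to cylinder scale produce a cylinder $\tilde Q_i$ of radius $\sim\ell$ with $\int_{\tilde Q_i}|\grad\unlin|^2\geq A_3^{-O(1)}\ell^{d-2}$ -- or else \eqref{finalbounds} holds and, by the contradiction hypothesis, $\int_{Q_i'}|\omega|^2<A_3^{-O(1)}(\delta\ell)^{d+2}$. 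Summing over $i$, using $\int_{Q_i'}|\omega|^2\leq\int_{Q(z_i',\ell)}|\grad u|^2\lesssim A^{O(1)}\ell^{d-2}$ from \eqref{firstlocalenergy} for the bad indices and $\int_{\Omega_0}|\omega|^2\geq A_3^{-O(1)}$ with the exponent in \eqref{finalconc} chosen so the good indices cannot carry more than half the enstrophy, one finds $\gtrsim A_3^{-O(1)}\ell^{-(d-2)}$ bad cylinders; a Vitali selection then yields $\gtrsim A_4^{-O(1)}\ell^{-(d-2)}$ \emph{disjoint} cylinders $\tilde Q$ of radius $\sim\ell$ with $\int_{\tilde Q}|\grad\unlin|^2\geq A_3^{-O(1)}\ell^{d-2}$. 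This is exactly the fractal configuration of the introduction, now forced at every admissible scale.

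It remains to contradict this. For $d\geq5$ take $\ell=A_4^{-2d-1}$ and $q=d/2$ in \eqref{maximalregularity}. From $\int_{\tilde Q}|\grad\unlin|^2\geq A_3^{-O(1)}\ell^{d-2}$ and H\"older on the cylinder $\tilde Q$ (dimensions $\ell^2$ in time, $\sim\ell^d$ in space) one gets $\|\grad\unlin\|_{L_{t,x}^{d/2}(\tilde Q)}^{d/2}\geq A_3^{-O(1)}\ell^{2}$; summing over the $\gtrsim A_4^{-O(1)}\ell^{-(d-2)}$ disjoint $\tilde Q$ and comparing with $\|\grad\unlin\|_{L_{t,x}^{d/2}([-1,0]\times\Rd)}^{d/2}\lesssim A^{O(1)}$ gives $A_4^{-O(1)}\ell^{-(d-4)}\lesssim A^{O(1)}$; since $\ell^{-(d-4)}=A_4^{(2d+1)(d-4)}$ and $(2d+1)(d-4)-(d+2)=2d^2-8d-6>0$ for $d\geq5$, this is false once $C_0$ (hence $A_4$) is large compared to the absolute constants, forcing the existence of a good scale $\ell=A_4^{-2d-1}$. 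For $d=4$ the factor $\ell^{-(d-4)}$ degenerates and this argument collapses; instead one sweeps the $\sim A_4$ dyadic frequencies $N\in[A_4,\exp(A_4)]$. By orthogonality $\sum_N\|P_N\grad\unlin\|_{L_{t,x}^2([-1,0]\times\Rd)}^2=\|\grad\unlin\|_{L_{t,x}^2}^2\leq A^{O(1)}$, so some such $N$ has $\|P_N\grad\unlin\|_{L_{t,x}^2}^2\leq A^{O(1)}A_4^{-1}$; setting $\ell=N^{-1}\in[\exp(-A_4),A_4^{-1}]$, the fractal configuration at this scale would force $\|P_N\grad\unlin\|_{L_{t,x}^2}^2\geq A_3^{-O(1)}$, contradicting the smallness just obtained ($A_3^{-O(1)}\gg A^{O(1)}A_4^{-1}$ for $C_0$ large). \textbf{The main obstacle is this last implication:} one must show that the enstrophy $\int_{\tilde Q}|\grad\unlin|^2\geq A_3^{-O(1)}\ell^{d-2}$ in each bad cylinder is carried predominantly by frequencies $\sim\ell^{-1}$, so that distinct dyadic scales feed distinct (up to bounded multiplicity) frequency bands. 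Here the regularity $|\grad^j u|\lesssim A_2^{-1}\ell^{-1-j}$ in the complement of the bad cylinders is essential: a component of $\grad\unlin$ at frequency $\ll\ell^{-1}$ is smooth on scale $\gg\ell$, so were it large on a bad cylinder it would stay large on a larger neighborhood where the regularity bound applies -- a contradiction; a Littlewood--Paley/maximal-function argument then confines the relevant frequencies to a bounded window about $\ell^{-1}$. (An estimate in the spirit of Proposition 8 of \cite{p} streamlines this and is what saves an exponential under symmetry, cf.\ Remark \ref{remark}.)

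Once a good scale $\ell$ is in hand, one picks $Q=Q(z_0',\ell/2)\subset[-1,-A_2^{-1}]\times B(A_3)$ at that scale with $C+D$ small, so that \eqref{finalbounds} follows from Propositions \ref{nested} and \ref{regularity}, while the designated $Q'$ retains the enstrophy lower bound, which upon subtracting the harmless $\wlin$ gives \eqref{finalconc}. Undoing the rescaling produces \eqref{finalbounds}--\eqref{ell} with $\delta=A_4^{-1}$ and $\ell$ as claimed.
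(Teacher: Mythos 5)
Your skeleton matches the paper's (Proposition \ref{bp}, Biot--Savart conversion to a vorticity concentration, a cover by $\ell$-cylinders with designated $\delta\ell$-subcylinders, a good/bad dichotomy counted against the local energy bound \eqref{firstlocalenergy}, the maximal-regularity bound \eqref{maximalregularity} for $d\geq5$, and scale/frequency pigeonholing for $d=4$), but two steps would fail as written. First, the $d=4$ bookkeeping does not close. Because you count bad cylinders through the $\omega$-mass on the $\delta\ell$-spaced grid, the parent (and $A_3$-enlarged) cylinders overlap up to $\delta^{-(d+2)}=A_4^{d+2}$ times, so after any Vitali selection your per-scale lower bound on the band energy $\|P_{\sim\ell^{-1}}\grad\unlin\|_{L^2_{t,x}}^2$ is at best $A_4^{-O(d)}$; this can never contradict the pigeonholed smallness $A^{O(1)}A_4^{-1}$, since only $\sim A_4$ dyadic scales are available in $[\exp(-A_4),A_4^{-1}]$. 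The paper avoids exactly this loss with a different dichotomy: either the bad cylinders' \emph{time projections} fail to cover the interval $I$, in which case a slab of regularity $I'\times B(A_3)$ exists and $Q,Q'$ are produced directly, or they do cover $I$, in which case one passes to a minimal time-subcover (overlap $\leq 2$) and measures the frequency-localized enstrophy over all of $\Rd$ in space, so the per-scale lower bound is $A_3^{-O(1)}$ with no $A_4$ loss, and summing over geometrically separated scales contradicts \eqref{sharpenergy}. Second, your resolution of the self-identified ``main obstacle'' (frequency confinement to $\sim\ell^{-1}$) is not correct: the regularity available off the bad cylinders is only $|\grad^j u|\leq A_2^{-1}\ell^{-1-j}$, which is far \emph{larger} than the size $A_3^{-O(1)}\ell^{-1-j}$ that a low-frequency component of $\grad\unlin$ would be forced to attain, so ``it would stay large where the regularity bound applies'' yields no contradiction (and those pointwise bounds control $u$, not Littlewood--Paley pieces of $\unlin$, and are unavailable on the bad cylinders themselves). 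The correct mechanism, which the paper uses, is that the lower bound on a bad cylinder is \emph{derived} already frequency-localized: failure of \eqref{finalbounds} gives, by the contrapositive of Propositions \ref{nested} and \ref{regularity}, that $C+D$ is not small at scale $\ell$, and then the low/intermediate/high frequency splitting from the proof of Proposition \ref{bp} (low frequencies by $L^\infty$ Bernstein, high frequencies by \eqref{unlin}--\eqref{sharpenergy} interpolation) forces a pointwise concentration $|P_Nu(z)|\geq A_3^{-1}\ell^{-1}$ with $N\in[A_3^{-1}\ell^{-1},A_3\ell^{-1}]$, hence $\|P_N\grad u\|_{L^2}\gtrsim A_3^{-O(1)}\ell^{(d-2)/2}$ on an enlarged cylinder; no a posteriori localization argument is needed. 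Your stated justification (``local energy estimates, \eqref{ulin}, propagation of the pointwise concentration'') does not supply this.

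Two smaller points. The reduction ``\eqref{finalconc} will follow once we produce a lower bound for $\|\grad\unlin\|_{L^2_{t,x}(Q')}$'' is false as stated: locally a large gradient need not have large curl (the equivalence $\|\curl v\|_{L^2}\sim\|\grad v\|_{L^2}$ for divergence-free $v$ is global), and the paper never needs this direction --- in the good case \eqref{finalconc} comes from pigeonholing the $\omega$-mass among the (disjoint) designated subcylinders, which your contradiction structure already delivers, so you should drop the transfer rather than rely on it. Finally, for $d\geq5$ your detour through ``regularity failure $\Rightarrow$ $\grad\unlin$-enstrophy'' plus Vitali is repairable but unnecessary: the paper contradicts the $L^{d/2}_{t,x}$ bound \eqref{supercriticalregularityeq} using the $\omega$-mass lower bounds in the disjoint bad subcylinders directly, which is both simpler and avoids importing the delicate implication you would otherwise have to prove even in that case.
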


\begin{proof}

\begin{figure}
    \centering
    \begin{tikzpicture}[scale=1.25,every text node part/.style={align=center}]
    \draw[line width=.22mm](0,-5)--(0,0);
    \draw[line width=.22mm](-5,0)--(5,0);
    \draw(-.08,-4.7)--node[left]{$t_0-T_1$}(.08,-4.7);
    \fill[red,opacity=.8](-.2,-.4)rectangle(.2,0);
    \filldraw[black,opacity=1](0,0)circle(1.2pt)node[anchor=south]{$z_0$};
    \fill[red,opacity=.8](.9,-3.7)rectangle(3.5,-1.3);
    \draw[->](.3,-.3)--(1.9,-1.15);
    \node at(1.6,-.5){Prop.\ \ref{bp}};
    \node at(2.2,-4.15){$Q(z_1,A_2^{-4}T_1^{\frac12})$\\($|P_{N_1}u|$ is bounded below)};
    \filldraw[black,opacity=1](2.2,-1.3)circle(1.2pt)node[anchor=south]{$z_1$};
    \fill[blue!50!white,opacity=.6](-4.2,-3)rectangle(4.2,-2.1);
    \node at(-1.2,-2.56){$I\times B(A_3T_1^\frac12)$\\($\omega$ concentration)};
    \draw[line width=.27mm](-3.29,-2.32-.6)rectangle(-2.81,-1.87-.6);
    \draw[|-|](-2.81,-2.49-.6)--(-3.29,-2.49-.6);
    \node at(-3.05,-2.63-.6){\small$\ell$};
    \fill[black](-3.05-.08,-2.18-.6)rectangle(-3.05+.08,-2.01-.6);
    \draw[|-|](-3.4,-2.18-.6)--(-3.4,-2.01-.6);
    \node at(-3.65,-2.7){\small$\delta\ell$};
    \node at(-3.05,-2.3){\small$Q'\subset Q$};
    \end{tikzpicture}
    \caption{We schematize some key steps in the proof of Proposition \ref{prop1}. The high frequency concentration at $z_0$ is propagated backward in time to $z_1$. The concentration of $P_{N_1}u$ persists in a parabolic cylinder (red) which we convert into a lower bound on $\|\omega\|_{L_{t,x}^2}$ (blue). The objective is to locate a small cylinder $Q$ such that $u$ obeys subcritical bounds in the interior and the vorticity concentrates on a smaller subcylinder $Q'$.}
    \label{fig1}

\end{figure}
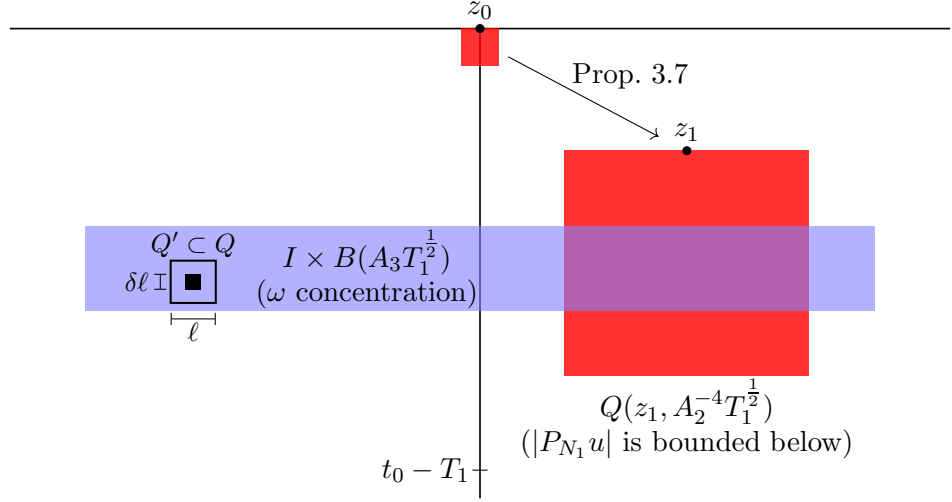

Without loss of generality we may let $z_0=0$ and $T_1=1$. Let us begin with the case $d\geq5$. By Proposition \ref{bp}, there exists a point $z_1\in[-1,-A_2^{-1}]\times B(A_2)$ and a frequency $N_1\in[A_2^{-1},A_2]$ such that
\eqn{
|P_{N_1}u(z_1)|\geq A_2^{-1}.
}
Combining this with Lemma \ref{frequencylocalized}, we find that the lower bound persists in a parabolic cylinder:
\eq{\label{lb}
|P_{N_1}u(z)|\gtrsim A_2^{-1},\quad \forall z\in Q(z_1,A_2^{-4}).
}
We apply Proposition \ref{decomposition} on $[t_1-2A_2^{-4},t_1]$ to obtain a decomposition $u=\ulin+\unlin$. Let $I$ be the contraction of the time interval $[t_1-A_2^{-4},t_1]$ by a factor of $\frac15$ about its center. By \eqref{maximalregularity}, H\"older's inequality, and \eqref{ulin},
\eq{\label{supercriticalregularityeq}
\|\grad u\|_{L_{t,x}^\frac d2(5I\times B(2A_3))}\leq A_3^{O(1)}.
}
Defining the vorticity $\omega:=dv$ where $d$ is the exterior derivative on $\Rd$ and $v$ is the covelocity field of $u$, we apply the codifferential $\delta$ to obtain $-\Delta  v=\delta\omega$. Thus we have a version of the Biot-Savart law,
\eqn{
v=-\Delta^{-1}\delta \omega.
}
It follows from \eqref{lb} and Lemma \ref{localbernstein} that for all $t\in I$,
\eqn{
A_2^{-O(1)}&\lesssim\|P_{N_1}\Delta^{-1}\delta\omega(t)\|_{L_x^2(B(A_3/2))}\\
&\lesssim N_1^{-1}\|\omega(t)\|_{L_{x}^2(B(A_3))}+(A_3N_1)^{-50d}N_1^{-1}(\|\grad\unlin\|_{L_x^2(\mathbb R^d)}+A_3^{\frac d2}\|\grad\ulin\|_{L_x^\infty(\Rd)}).
}
Taking the $L_t^2(I)$ norm, bounding the $\unlin$ global error term with \eqref{sharpenergy}, and the $\ulin$ term with \eqref{ulin}, we obtain
\eq{\label{startingconcentration}
\|\omega\|_{L_{t,x}^2(I\times B(A_3))}\geq A_2^{-O(1)}.
}
Consider the collection of parabolic cylinders
\eqn{
\mathcal C_0:=\{Q(z,\ell):z\in((\delta\ell)^2\mathbb Z\times(\delta\ell\hspace{.01in}\mathbb Z)^{d})\cap (2I\times B(2A_3))\}
}
of which there are $\sim A_2^{-5}A_3^{d}(\delta\ell)^{-d-2}$. (Once again $2I$ denotes dilation of the interval about its center.) We seek to understand in which cylinders $u$ is regular. By the $L^p$-boundedness of $\div\div/\Delta$, H\"older's inequality, Sobolev embedding, and \eqref{sharpenergy},
\eqn{
\|\Delta^{-1}\div\div\unlin\otimes\unlin\|_{L_t^1L_x^{\frac d{d-2}}([t_1-A_2^{-4},t_1]\times\mathbb R^d)}&\lesssim\|\unlin\|_{L_t^2L_x^{\frac{2d}{d-2}}([t_1-A_2^{-4},t_1]\times\mathbb R^d)}^2\leq A_2^{O(1)}.
}
By interpolation with the $L_t^\infty L_x^\frac d2$ bound coming from \eqref{unlin},
\eqn{
\|\Delta^{-1}\div\div\unlin\otimes\unlin\|_{L_{t,x}^2([t_1-A_2^{-4},t_1]\times\Rd)}&\leq A_2^{O(1)}.
}
Using this, \eqref{critical}, and \eqref{sharpenergy},
\eq{\label{total}
&
\sum_{Q\in\mathcal C_0}\Big(\|\grad\unlin\|_{L_{t,x}^2(Q)}^2+\|\Delta^{-1}\div\div\unlin\otimes\unlin\|_{L_{t,x}^2(Q)}^2+\|u\|_{L_{t,x}^d}^d\Big)\leq \delta^{-d-2}A_2^{O(1)},
}
since the sets in $\mathcal C_0$ can overlap up to $O(\delta^{-d-2})$ times. Define
\eqn{
\mathcal C_1:=\Big\{Q\in\mathcal C_0:\max\big(\|\grad\unlin\|_{L_{t,x}^2(Q)},\|\Delta^{-1}\div\div\unlin\otimes\unlin\|_{L_{t,x}^2(Q)},\|u\|_{L_{t,x}^d(Q)}^{d/2}\big)>A_3^{-1}\ell^{\frac d2-1}\Big\}.
}
From \eqref{total}, we clearly have
\eqn{
\#(\mathcal C_1)\leq \delta^{-d-2}\ell^{2-d}A_3^2A_2^{O(1)}\leq\frac1{100}\#(\mathcal C_0).
}
Consider an arbitrary $Q_0=I_0\times B_0\in\mathcal C_0\setminus\mathcal C_1$. Additionally using \eqref{ulin} and \eqref{unlin}, we have
\eqn{
\|p\|_{L_{t,x}^2(Q_0)}&\leq A_3^{-1}\ell^{\frac d2-1}+\|\Delta^{-1}\div\div(2\unlin\odot\ulin+\ulin\otimes\ulin)\|_{L_{t,x}^2(Q_0)}\\
&\lesssim A_3^{-1}\ell^{\frac d2-1}+\ell^{\frac d2}A_2^{O(1)}.
}
Then by H\"older's inequality
\eq{\label{Dbound}
D(Q_0)\lesssim A_3^{-1}+\ell A_2^{O(1)}\lesssim A_3^{-1}.
}
Next we address $C(Q_0)$. Let $I_{1/10}$ be the first $\frac1{10}$ of the interval $I_0$. Using again that $Q_0\in\mathcal C_0\setminus\mathcal C_1$,
\eqn{
\int_{I_{1/10}}\|u\|_{L_x^d(B)}^ddt\leq A_3^{-2}\ell^{d-2}
}
and so by the pigeonhole principle and H\"older's inequality, there exists a $\tau_0\in I_{1/10}$ such that
\eqn{
\|u(\tau_0)\|_{L_x^2(B_0)}\lesssim\ell^{\frac d2-1}\|u(\tau_0)\|_{L_x^d(B_0)}\lesssim A_3^{-\frac2d}\ell^{\frac d2-\frac4d}.
}
With this we can apply \eqref{energychange}, \eqref{Dbound}, and the fact that $Q_0\notin\mathcal C_1$ (along with H\"older's inequality and \eqref{ulin} for the $\ulin$ part) to obtain
\eqn{
\|u\|_{L_t^\infty L_x^2(3Q_0/4)}\lesssim A_3^{-\frac2d}\ell^{\frac d2-\frac4d}+A_3^{-\frac12}\ell^{\frac d2-1}A+A_2^{O(1)}\ell^{\frac d2}.
}
A bound for $\|\grad u\|_{L_{t,x}^2(3Q_0/4)}$ similarly follows from the definition of $\mathcal C_1$ and \eqref{ulin}. Then by Gagliardo-Nirenberg interpolation,
\eqn{
C(3Q_0/4)\lesssim A_3^{-\frac2d}A+\ell A_2^{O(1)}.
}
With this and \eqref{Dbound}, we arrive at \eqref{finalbounds} in $Q_0/2$ by Propositions \ref{nested} and \ref{regularity}.

For every $Q=Q(z,\ell)\in\mathcal C_0$, let $Q':=Q(z-(\ell^2/8,0),\delta\ell)$. Since $\{Q':Q\in\mathcal C_0\}$ covers $I\times B(R)$, \eqref{startingconcentration} implies
\eqn{
\sum_{Q\in\mathcal C_0}\|\omega\|_{L_{t,x}^2(Q')}^2\geq 2A_3^{-1}.
}
There are two cases. First, suppose
\eqn{
\sum_{Q\in\mathcal C_0\setminus\mathcal C_1}\|\omega\|_{L_{t,x}^2(Q')}^2\geq A_3^{-1}.
}
By the pigeonhole principle, since the family $\mathcal C_0\setminus\mathcal C_1$ has cardinality $A_3^{O(1)}(\delta\ell)^{-d-2}$, there is a $Q\in\mathcal C_0\setminus\mathcal C_1$ such that
\eq{\label{goodnews}
\|\omega\|_{L_{t,x}^2(Q')}\geq A_3^{-O(1)}(\delta\ell)^{\frac d2+1}.
}
This pair $Q,Q'$ satisfies the conclusion of the proposition. In the other case,
\eq{\label{hardcase}
\sum_{Q\in\mathcal C_1}\|\omega\|_{L_{t,x}^2(Q')}^2\geq A_3^{-1}.
}
If so, we seek to derive a contradiction with \eqref{supercriticalregularityeq}. We compare the lower bound \eqref{hardcase} with
\eqn{
\|\omega\|_{L_{t,x}^2(Q')}^2\leq A^{O(1)}(\delta\ell)^{d-2}
}
from \eqref{firstlocalenergy}, and the fact that $\mathcal C_1$ contains at most $A_3^3\delta^{-d-2}\ell^{-d+2}$ cylinders. Indeed, defining the family of disjoint cylinders
\eqn{
\mathcal C_2:=\{Q':Q\in\mathcal C_1,\,\,\|\omega\|_{L_{t,x}^2(Q')}^2>A_3^{-5}\delta^{d+2}\ell^{d-2}\},
}
we have, using that the contracted cylinders $\{Q'\}_{Q\in\mathcal C_0}$ are disjoint,
\eqn{
A_3^{-1}&\leq\sum_{Q\in\mathcal C_1}\|\omega\|_{L_{t,x}^2(Q')}^2\leq\#(\mathcal C_2)A^{O(1)}(\delta\ell)^{d-2}+\#(\mathcal C_1\setminus\mathcal C_2)A_3^{-5}\delta^{d+2}\ell^{d-2}.
}
It follows that
\eq{\label{C2number}
\#(\mathcal C_2)\geq A_3^{-2}(\delta\ell)^{-d+2}.
}
For all $Q'\in\mathcal C_2$ and $p\geq2$, by H\"older's inequality,
\eqn{
\|\omega\|_{L_{t,x}^\frac d2(Q')}\gtrsim A_3^{-\frac52}\ell^{\frac4d}\delta^{2+\frac4d}.
}
Summing over $\mathcal C_2$,
\eq{\label{vorticitytotal}
\|\omega\|_{L_{t,x}^\frac d2(2I\times B(2A_3))}\geq A_3^{-O(1)}\ell^{\frac 8d-2}\delta^\frac8d.
}
With $\ell$ sufficiently small as in \eqref{ell}, this is in contradiction with \eqref{supercriticalregularityeq}.

Next consider the case $d=4$. We define
\eqn{
\mathcal C_{3}:=\big\{Q\in\mathcal C_0:\|\grad^ju\|_{L_{t,x}^\infty(Q/2)}\leq A_2^{-1}\ell^{-j-1}\text{ for }j=0,1,2\big\}.
}
There are two cases: first, suppose $\bigcup_{Q\in\mathcal C_0\setminus\mathcal C_3}5Q$ projected to the time axis does not cover $I$. Then there exists an interval $I'\subset I$ of length $\ell^2$ such that
\eqn{
\|\grad^ju\|_{L_{t,x}^\infty(I'\times B(A_3))}\leq A_2^{-1}\ell^{-j-1}
}
for $j=0,1,2$. The existence of a large slab of regularity makes this case relatively straightfoward so we argue briefly. One appeals once again to \eqref{lb} and repeats the calculations leading to \eqref{startingconcentration}; however now when we take the $L_t^2$ norm of the Bernstein inequality it is only over $I'$ which yields the lower bound $\|\omega\|_{L_{t,x}^2(I'\times B(A_3))}\geq A_2^{-O(1)}\ell$. Analogous to the definition of $\mathcal C_0$, we partition a slight dilation of $I'\times B(A_3)$ into overlapping parabolic cylinders of length $\ell$ offset by length $\delta\ell$. Using the regularity assumed within $I'$ and applying the pigeonhole principle to the vorticity lower bound, it is clear that there exist $Q$ and $Q'$ obeying \eqref{finalbounds} and \eqref{finalconc}.

Otherwise, suppose $\bigcup_{Q\in\mathcal C_0\setminus\mathcal C_3}5Q$ when projected to the time axis does cover $I$. Then we may take a $\mathcal C_4\subset\mathcal C_0\setminus\mathcal C_3$ such that the projections of $\{5Q\}_{Q\in\mathcal C_4}$ form a subcover which is minimal in the sense that no more than two intersect at once. It follows that
\eqn{
\#(\mathcal C_4)\geq A_2^{-O(1)}\ell^{-2}.
}
Due to our definition of $\mathcal C_3$, for every $Q\in\mathcal C_4$, applying Propositions \ref{nested} and \ref{regularity} in the converse yields
\eqn{
C(Q)+D(Q)>A_2^{-O(1)}.
}
By the argument from the proof of Proposition \ref{bp}, there exist $N\in[A_3^{-1}\ell^{-1},A_3\ell^{-1}]$ and $z\in A_3Q$ such that
\eqn{
|P_Nu(z)|\geq A_3^{-1}\ell^{-1}.
}
It follows by Lemmas \ref{frequencylocalized} and \ref{localbernstein}, as well as H\"older, \eqref{ulin}, and \eqref{sharpenergy} to estimate the global Bernstein error, that
\eqn{
\|P_N\grad u\|_{L_{t,x}^2(A_3^2Q)}\geq A_3^{-O(1)}\ell.
}
Using H\"older's inequality and \eqref{PNuflat}, one computes that the contribution from $\ulin$ is negligible thanks to the smallness of $\ell$. (Note that we continue to refer to the decomposition obtained by applying Proposition \ref{decomposition} on $[t_1-2A_2^{-4},t_1]$.) By the properties of $\mathcal C_4$, particularly the at most $A_3^{O(1)}$-fold boundedness of the overlap, we obtain
\eq{\label{energylb}
\sum_{N\in[A_3^{-1}\ell^{-1},A_3\ell^{-1}]}\|P_N\grad \unlin\|_{L_{t,x}^2(2I\times\Rd)}^2\geq A_3^{-O(1)}.
}
On the other hand, by Plancherel and \eqref{sharpenergy},
\eqn{
\sum_N\|P_N\grad \unlin\|_{L_{t,x}^2(I\times\Rd)}^2\leq A_2^{O(1)}.
}
If \eqref{energylb} holds for all $\ell\in[\exp(-A_4),A_4^{-1}]$, we reach a contradiction by summing over a geometric sequence of scales in this range. Thus the proposition is satisfied by fixing $\ell$ to be any scale for which \eqref{energylb} fails.
\end{proof}

Having obtained a suitable vorticity concentration within a cylinder where the solution is regular, we need only to propagate this lower bound back to time $t_0$ using a series of Carleman inequalities. For every scale $T_1$ between $N_0^{-2}$ and $T$, this scheme leads to a triple-exponentially small amount of $L_x^3$ mass at $t_0$. Summing over $\log(TN_0^2)$-many geometrically separated scales and comparing the result to \eqref{critical}, we will conclude the following.

\begin{prop}[Propagation forward to the final time]\label{prop2}
Suppose $u$, $z_0$, and $N_0$ are as in Proposition \ref{prop1}. Then
\eqn{
TN_0^2\leq\exp\exp\exp\exp(A_6).
}
\end{prop}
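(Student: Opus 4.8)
The plan is to argue by contradiction. Suppose $TN_0^2>\exp\exp\exp\exp(A_6)$ and normalize $t_0=0$. Following the three-dimensional scheme of \cite{tao}, I would show that for each scale $T_1=\lambda^kA_1^2N_0^{-2}$ with $0\leq k\lesssim\log_\lambda(TN_0^2)$ ($\lambda$ a large fixed constant), the velocity $u(0)$ carries a definite amount $\beta=\beta(A)$ of $L_x^d$ mass in the shell $\{c^{-1}T_1^{1/2}\leq|x-x_0|\leq cT_1^{1/2}\}$, with $\beta$ \emph{triple}-exponentially small in $A$. For $\lambda$ large compared to $c$ these shells are disjoint, so summing over the $\gtrsim\log(TN_0^2)$ admissible scales and comparing with \eqref{critical} gives $\log(TN_0^2)\cdot\beta^d\lesssim A^d$, hence $\log(TN_0^2)\leq(A/\beta)^d\leq\exp\exp\exp(A_6)$, and therefore $TN_0^2\leq\exp\exp\exp\exp(A_6)$.

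Fix such a $T_1$. First, apply Proposition \ref{annuli} at the time scale $\sim T$ to fix once and for all a wide annulus $\mathcal A=\{R_*\leq|x|\leq R_*^{2A_4}\}$ (with $\log\log R_*\lesssim A_4$) on which $\omega$ is regular and small throughout a time interval $\sim T$ in length; this will supply the region where the lateral data for the later backward-uniqueness Carleman estimate is negligible. Next, apply Proposition \ref{prop1} at the scale $T_1$ to produce $\ell$, a cylinder $Q\subset[-T_1,-A_2^{-1}T_1]\times B(A_3T_1^{1/2})$ with $\|\grad^ju\|_{L_{t,x}^\infty(Q)}\leq A_2^{-1}\ell^{-1-j}$ for $j=0,1,2$, and a sub-cylinder $Q'$ with $\|\omega\|_{L_{t,x}^2(Q')}$ bounded below. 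On $Q$ the vorticity satisfies a perturbed heat equation with, after enlarging $A_2$, $|L\omega|\leq\frac{|\omega|}{C_0\ell^2}+\frac{|\grad\omega|}{(C_0\ell^2)^{1/2}}$; combining this with the $L_{t,x}^2$ bound on $Q'$ and the local energy estimate \eqref{firstlocalenergy} gives $t_*\approx-T_1$ and $x_*$ with $\int_{B(x_*,\delta\ell)}|\omega(t_*)|^2\,dx$ bounded below. Then a coupled spatiotemporal bootstrap --- at each stage spreading the concentration spatially by a bounded factor using Proposition \ref{slices} together with Propositions \ref{nested} and \ref{regularity} (or short applications of Proposition \ref{uc}), and exploiting the regularity so gained to enlarge the time interval on which the concentration persists --- upgrades this to $\int_{B(x_*,\eta^5R_0)}|\omega(t)|^2\,dx\geq\epsilon T_1^{d/2-2}$ for every $t$ in a macroscopic interval ending at $0$, where $R_0\sim T_1^{1/2}$, $\eta\sim A_2^{-O(1)}$, and $\epsilon$ is only polynomially small in $A$. (This temporal enlargement, unnecessary when $d=3$ thanks to epochs of regularity, is what forces the bootstrap.)

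With a macroscopic-in-time lower bound in hand, invoke Proposition \ref{slices} to obtain a conical slice of regularity $S$ issuing from $x_0$ in a direction $\theta$, verify \eqref{reg} for $\omega$ on $S$, and apply Proposition \ref{iteratedcarleman} to transport the lower bound outward along $S$: at radius $R$ one obtains the same bound with $\epsilon$ replaced by $\epsilon^{(R/R_0)^{\eta^{-4}}}$, still valid up to time $0$. The acceleration of Proposition \ref{iteratedcarleman} --- only $\sim\log(R/R_0)$ iterations are used --- confines the loss to a single extra exponential over \cite{tao}: one may push $R$ out to $R_0\exp\exp(A^{O(1)})$, into a region one arranges to be regular on all of $[-T_1,0]$, while keeping the exponent $\epsilon^{(R/R_0)^{\eta^{-4}}}$ only double-exponentially small. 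A final application of the backward-uniqueness Carleman inequality (the $d$-dimensional analogue of that in \cite{tao}) on the associated cylinder, in contrapositive form, turns the concentration at times near $-T_1$ into a concentration at $t=0$; here $\mathcal A$ ensures the lateral error lies far below this double-exponentially small quantity, and the Gaussian decay of the heat kernel localizes the resulting lower bound for $\|\omega(0)\|_{L_x^2}$, hence for $\|u(0)\|_{L_x^d}$, to the shell $\{|x-x_0|\sim T_1^{1/2}\}$. This is the third exponential loss, so the mass $\beta$ it yields is triple-exponentially small, and summing over scales as above finishes the proof.

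The main difficulty is the exponential bookkeeping: one must check that Proposition \ref{iteratedcarleman} costs only one exponential more than the three-dimensional argument --- which is exactly why the concentration is transported through an expanding slice (Proposition \ref{slices}) rather than a region $Q_0\times\mathbb R^{d-k}$ --- and, crucially, that the error in the backward-uniqueness Carleman estimate, governed by the smallness of $\omega$ on $\mathcal A$ from Proposition \ref{annuli}, is dominated by the double-exponentially small propagated lower bound, so the conclusion is not vacuous; this is the quantitative cost of having no epochs of regularity. A secondary obstacle, again absent when $d=3$, is the coupled spatiotemporal upgrade of the single-time concentration of Proposition \ref{prop1} to one living on a full macroscopic time interval ending at $0$, carried out without entering the possibly-singular region near $x_0$.
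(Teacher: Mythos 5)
Your outer skeleton (a per-scale $L_x^d$ lower bound at the final time, summed over geometrically separated scales against \eqref{critical}) is the paper's, but the core step you propose --- the ``coupled spatiotemporal bootstrap'' upgrading the concentration from Proposition \ref{prop1} to $\int_{B(x_*,\eta^5R_0)}|\omega(t)|^2dx\gtrsim\epsilon T_1^{d/2-2}$ with $R_0\sim T_1^{1/2}$, $\epsilon$ only polynomially small, valid for \emph{every} $t$ in a macroscopic interval ending at the final time --- is exactly what none of the available tools can deliver, and it is the heart of the difficulty. Proposition \ref{uc} (via Lemma \ref{onestep}) spreads a concentration outward in space at times already inside the interval where the hypothesis holds; it never extends that interval toward the future. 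Proposition \ref{slices} produces a slice over a time interval of length only $A_2^{-2}R^2$, located by pigeonholing somewhere in $[t_0-R^2,t_0]$; you cannot force it to end at the final time, and you certainly cannot arrange the regularity needed for \eqref{differentialinequality} near the spatial origin up to time $0$, since that is where the hypothetical singularity sits. Relatedly, your claim that the output of Proposition \ref{iteratedcarleman} is ``still valid up to time $0$'' conflates the proposition's internal final time with the solution's final time: in the paper it is applied with internal time scale $\sim(\delta\ell)^2$, so the propagated concentration lives at times near $\tau$, still of order $T_1$ before the end. The paper bridges the temporal gap differently: the $L_{t,x}^2$ bound on $Q'$ is fed directly into the $Z$ term of a single application of Proposition \ref{uc} centered in the slice from Proposition \ref{slices} at scale $\delta\ell$, yielding a bound $\exp(-A_4^{O(1)})$ (not polynomial) at every time of the short interval $I$; Proposition \ref{iteratedcarleman} then carries this far out in space at those same times; and only the backward-uniqueness Carleman inequality, applied over the whole interval in the distant annulus which is regular on all of $[-1,0]$, transports it to the final time. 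That is the paper's substitute for epochs of regularity; your bootstrap asks the temporal propagation to happen near the origin, where it cannot.

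Two further problems. First, the annulus of Proposition \ref{annuli} cannot be fixed ``once and for all at time scale $\sim T$'': its inner radius would then be $\gtrsim T^{1/2}$, and Proposition \ref{iteratedcarleman} cannot reach it from radius $\sim T_1^{1/2}$ without a loss of order $\epsilon^{(T/T_1)^{O(1)}}$, which is not bounded in terms of $A$; the annulus must be re-located for each $T_1$, at radius in $[A_4T_1^{1/2},\exp\exp(A_4)T_1^{1/2}]$ (consequently the per-scale shells are wide, so consecutive scales must be separated by $\exp\exp(A^{O(1)})$ rather than a constant --- harmless, but not what you wrote). Second, in the backward-uniqueness step the bulk error carries the weight $e^{2|x|^2/C_0}$ out to $|x|\sim R^{2A_4}$, which the pointwise bound $A_4^{-1/O(1)}$ from Proposition \ref{annuli} does not defeat; one cannot simply declare it dominated by the double-exponentially small propagated bound. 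The paper instead follows Tao's Theorem 5.1: either the concentration appears at time $0$ directly as in \eqref{directcase}, or the weighted integral is huge as in \eqref{indirectcase}, in which case pigeonholing in radius, in time, and in space, followed by one more application of Proposition \ref{uc}, produces the time-$0$ concentration. Your sketch asserts that the first alternative always holds, which is not justified.
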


\begin{proof}

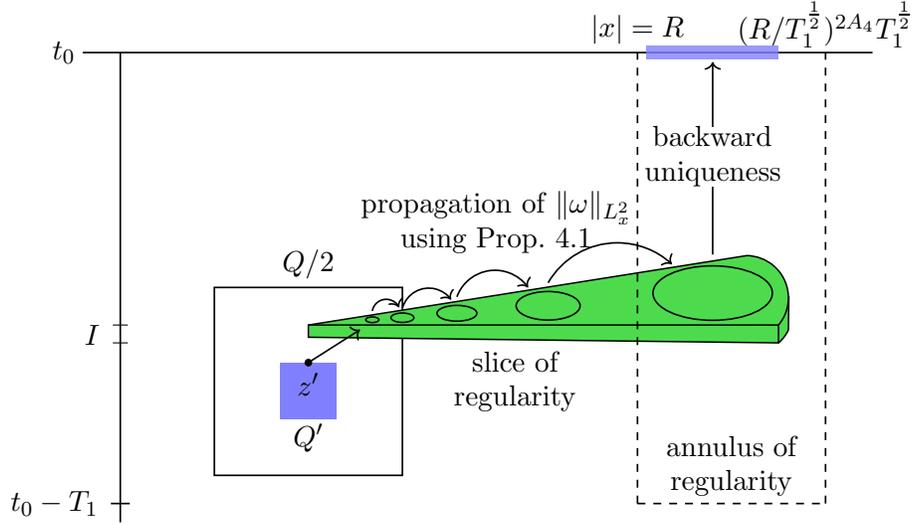
\begin{figure}
    \centering
    \begin{tikzpicture}[scale=1.25,every text node part/.style={align=center}]
    \draw[line width=.22mm](-.4,.5)--(8,.5);
    \node at(-.6,.5){$t_0$};
    \draw[line width=.22mm](0,.5)--(0,-4.5);
    \draw[line width=.22mm](-.1,-4.3)--(.1,-4.3);
    \draw[line width=.22mm](1,-4)rectangle(3,-2);
    \node at(-.7,-4.3){$t_0-T_1$};
    \node at(2,-1.75){$Q/2$};
    \fill[blue!50!white,opacity=1](2-.3,-3.4)rectangle(2+.3,-2.8);
    \filldraw[black,opacity=1](2,-2.8)circle(1.pt)node[anchor=north]{$z'$};
    \node at(2,-3.6){$Q'$};
    \draw(-.08,-2.4)--(.08,-2.4);
    \draw(-.08,-2.59)--(.08,-2.59);
    \node at (-.3,-2.5){$I$};
    \filldraw[fill=black!20!green!70!white,opacity=1,line width=.20mm](7,-2.59)arc(-50:-25:.4)--(7.11,-2.1)--(6,-2);
    \filldraw[fill=black!20!green!70!white,opacity=1,line width=.20mm](7,-2.4)arc(-40:89:.45)--(2,-2.4);
    
    \draw[fill=black!20!green!70!white,opacity=1,line width=.20mm](2,-2.53)--(7,-2.59)--(7,-2.4)--(2,-2.4)--cycle;
    \node at(4.2,-3){slice of\\regularity};
    \draw[line width=.21mm](2.68,-2.344)ellipse(.07 and .03);
    \draw[line width=.21mm](3.,-2.322)ellipse(.122 and .05);
    \draw[line width=.21mm](3.58,-2.275)ellipse(.212 and .084);
    \draw[line width=.21mm](4.55,-2.195)ellipse(.34 and .152);
    \draw[line width=.21mm](6.3,-2.06)ellipse(.635 and .29);
    \draw[->,line width=.22mm](2,-2.8)--(2.55,-2.45);
    \draw[->,line width=.22mm](2.68,-2.25)arc(-190:-335:.15);
    \draw[->,line width=.22mm](3.,-2.222)arc(-195:-325:.29);
    \draw[->,line width=.22mm](3.58,-2.1)arc(-200:-325:.43);
    \draw[->,line width=.22mm](4.55,-1.95)arc(-210:-313:.85);
    \node at(4,-1.3){propagation of $\|\omega\|_{L_x^2}$\\using Prop.\ \ref{uc}};
    \node at(5.5,.75){$|x|=R$};
    \node at(7.5,.8){$(R/T_1^\frac12)^{2A_4}T_1^\frac12$};
    \draw[dashed,line width=.22mm](5.5,-4.3)--(5.5,.5);
    \draw[dashed,line width=.22mm](7.5,-4.3)--(7.5,.5);
    \draw[dashed,line width=.22mm](5.5,-4.3)--(7.5,-4.3);
    \node at(6.5,-3.9){annulus of\\regularity};
    \fill[blue!50!white,opacity=.8](5.6,.43)rectangle(7,.57);
    \draw[line width=.22mm,->](6.3,-1.65)--(6.3,.4);
    \fill[white](5.6,-.92)rectangle(7,-.29);
    \node at(6.3,-.6){backward\\uniqueness};
\end{tikzpicture}
\caption{In the proof of Proposition \ref{prop2} we begin with a vorticity concentration in a parabolic cylinder $Q'$, which in turned is contained in a $Q/2$ where $u$ possesses subcritical bounds. We use Proposition \ref{uc} to propagate the vorticity lower bound into a slice of regularity obtained from Proposition \ref{slices}. Then we iteratively apply Proposition \ref{uc} to locate a vorticity concentration in a distant annulus where  $u$ is regular. In this annulus we may apply a backward uniqueness Carleman inequality to conclude the existence of a $\|u\|_{L_x^d}$ concentration at the final time.
}

\end{figure}

Let us once again fix an arbitrary $T_1\in[A_1^2N_0^{-2},T/100]$. For now, we normalize $z_0=0$ and $T_1=1$. We continue to use the notation of Proposition \ref{prop1} and its proof; in particular let us take $Q$ and $Q'$ satisfying the the conclusion. Let $z':=(t',x')$ be the center of $Q'$. We apply Proposition \ref{slices} centered at $z'+(100(\delta\ell)^2,0)$ (i.e., shifted forward in time) at length scale $R=\delta\ell$. This yields a slice of regularity which, by rotating, we may assume has $\theta=e_1$. Specifically, there is an $I\subset[t'+99(\delta\ell)^2,t'+100(\delta\ell)^2]$ of length $(\delta\ell)^2A_2^{-2}$ such that within
\eqn{
S:=I\times\{x\in\mathbb R^d:\dist(x,x'+\mathbb R_+e_1)\leq 10A_2^{-1}|x_1-x_1'|,\,|x-x'|>20\delta\ell\},
}
we have for $j=0,1,2$
\eq{
\|\grad^ju\|_{L_{t,x}^\infty(S)}\leq A_1^{-1}(\delta\ell/A_2)^{-1-j}.\label{S}
}
Let $t''\in I$ be arbitrary. In order to propagate vorticity concentration into this cone, we apply Proposition \ref{uc} to the function
\eqn{
(t,x)\mapsto\omega(t''-t,x+x'+50\delta\ell e_1)
}
on the interval $[0,C_0(\delta\ell)^2]$ with $t_0=75(\delta\ell)^2$, $t_1=(A_3^{-2}\delta^3\ell)^2$, and $r=\ell/2$. The differential inequality \eqref{differentialinequality} for $\omega$ becomes clear from the coordinate form of the vorticity equation
\eqn{
\dd_t\omega_{ij}-\Delta\omega_{ij}+u\cdot\grad\omega_{ij}+(\dd_iu_k)\omega_{kj}-(\dd_ju_k)\omega_{ik}=0,
}
combined with the estimates in \eqref{finalbounds}. Considering each the terms in the Carleman inequality which takes the form $Z\leq X+Y$, by \eqref{finalconc} the left-hand side obeys
\eqn{
Z\geq A_3^{-O(1)}(\delta\ell)^{d}
}
while for the first term on the right-hand side,
\eqn{
X\leq e^{-1/O(\delta^2)}\ell^{d-4}.
}
The latter is negligible compared to the former given \eqref{ell}; thus the Carleman inequality becomes
\eqn{
\int_{B(x'+50\delta\ell e_1,\ell/2)}|\omega(t'')|^2e^{-|x-x'-50\delta\ell e_1|^2/4t_1}dx\geq\ell^d\exp(-\delta^{-3}).
}
Finally we narrow the domain of integration using the fact that the contribution from outside $B(x'+50\delta\ell e_1,A_3^{-1}\delta\ell)$ is negligible compared to the left-hand side which follows from \eqref{finalbounds} and \eqref{ell}. This yields
\eq{
\int_{B(x'+50\delta\ell e_1,A_3^{-1}\delta\ell)}|\omega(t'')|^2dx\geq\exp(-A_4^5)\label{slicelb}
}
for every $t''\in I$.

Next we apply Proposition \ref{annuli} to find an $R\in[A_4,\exp\exp(A_4)]$ such that
\eq{\label{annulusregularity}
\|\grad^ju\|_{L_{t,x}^\infty([-1,0]\times\{|x|\in[R,R^{2A_4}]\})}\leq A_4^{-1/O(1)}
}
for $j=0,1,2$. Then define $x_*=x'+100Re_1$ and let $\tau=\sup I$. We apply Proposition \ref{iteratedcarleman} to the function
\eqn{
(t,x)\mapsto\omega(t+\tau,x+x')
}
on the interval $[0,4(\delta\ell)^2]$ with $R_0=50\delta\ell$, $\eta=A_2^{-3}$, and $\epsilon=e^{-A_4^6}$ to find
\eq{\label{xstarlowerbound}
\int_{B(x_*,A_3^{-1}R)}|\omega(t)|^2dx\geq e^{-R^{A_4}}
}
for every $t\in[\tau-e^{-3A_4},\tau]$. Note that the initial lower bound follows from \eqref{slicelb} and that we have \eqref{reg} thanks to \eqref{S} and the vorticity equation.

Next we propagate this concentration forward in time using a Carleman inequality for backward uniqueness, see Proposition 4.2 in \cite{tao} (the extension of which to higher dimensions was proved in \cite{p}, Proposition 9). In particular, by applying it to the function $(t,x)\mapsto\omega(-t,x)$ on the interval $[0,1]$ with $r_-=5R$ and $r_+=R^{2A_4}/10$, we have $Z\leq X+Y$ where
\eqn{
Z&\gtrsim e^{-(5R^2)^{A_4}},\\
X&\lesssim e^{-R^{5A_4}}\int_{-1}^0\int_{5R\leq|x|\leq R^{2A_4}/10}e^{2|x|^2/C_0}(|\omega|^2+|\grad\omega|^2)dxdt,\\
Y&\leq e^{
R^{10A_4}}\int_{5R\leq|x|\leq r_+}|\omega(0,x)|^2dx.
}
(Note that this and all subsequent applications of Carleman inequalities are valid because \eqref{differentialinequality} is implied by \eqref{annulusregularity} and the vorticity equation.) Thus there are two cases:
\eq{\label{directcase}
\int_{5R\leq|x|\leq R^{2A_4}/10}|\omega(0,x)|^2dx\geq e^{-R^{20A_4}}
}
and
\eq{\label{indirectcase}
&\int_{-1}^0\int_{5R\leq|x|\leq R^{2A_4}/10}e^{2|x|^2/C_0}(|\omega|^2+|\grad\omega|^2)dxdt\geq e^{R^{A_4}}.
} 
First assuming \eqref{indirectcase}, we essentially follow the proof of Theorem 5.1 in \cite{tao}. By the pigeonhole principle, there exists an $R'\in[5R,R^{2A_4}/10]$ such that
\eqn{
\int_{-1}^0\int_{R'\leq|x|\leq 2R'}(|\omega|^2+|\grad\omega|^2)dxdt\geq e^{-4(R')^2/C_0}.
}
By \eqref{annulusregularity}, the contribution to the left-hand side from the time interval $[-e^{-(R')^2},0]$ is negligible compared to the right so essentially the same lower bound holds with the integral evaluated on $[-1,-e^{-(R')^2}]$. We apply the pigeonhole principle, now in time, to find a $T_0\in[e^{-(R')^2},1]$ in this time interval such that
\eqn{
\int_{-2T_0}^{-T_0}\int_{R'\leq|x|\leq 2R'}(|\omega|^2+|\grad\omega|^2)dxdt\geq e^{-(R')^2}.
}
Having obtained length and time scales where the vorticity concentrates, we cover the annulus $\{R'\leq|x|\leq2R'\}$ by $O(R'/T_0^\frac12)^d$ balls of radius $T_0^\frac12$. The pigeonhole principle then provides an $x_0\in\{R'\leq|x|\leq 2R'\}$ such that
\eqn{
\int_{Q((-T_0,x_0),T_0^{1/2})}(|\omega|^2+|\grad\omega|^2)dxdt\geq e^{-O(R')^2}.
}
Finally we may apply Proposition \ref{uc} on $[0,1000dT_0]$ to the function
\eqn{
(t,x)\mapsto\omega(-t,x+x_0)
}
with $t_0=T_0$, $t_1=C_0^{-3}T_0$, and $r=C_0R'T_0^\frac12$. The Carleman inequality becomes
\eqn{
e^{-O(R')^2}\leq e^{-C_0(R')^2}T_0^{\frac d2}+e^{O(C_0^2(R')^2)}\int_{B(x_0,C_0R'T_0^\frac12)}|\omega(0,x)|^2e^{-C_0^3|x-x_0|^2/4T_0}dx.
}
With a sufficiently large choice of $C_0$, the first term on the right-hand side is negligible compared the the left. Moreover, the contribution to the second term on the right from outside $B(x_0,R'/2)$ is also negligible by \eqref{annulusregularity}. Thus
\eqn{
\int_{B(x_0,R'/2)}|\omega(0,x)|^2dx\geq e^{-C_0^3(R')^2}.
}
In both cases \eqref{directcase} and \eqref{indirectcase}, we can thus conclude
\eqn{
\int_{2R\leq|x|\leq R^{2A_4}/4}|\omega(0,x)|^2dx\geq\exp(-\exp\exp(2A_4)).
}
Now let us fix an $x_*\in\{2R\leq|x|\leq R^{2A_4}/4\}$ where
\eqn{
|\omega(0,x_*)|\geq\exp(-\exp\exp(3A_4)).
}
By repeating the simple mollification argument from \cite{tao} to convert the concentration of vorticity into the critical space, we obtain
\eqn{
\int_{A_4T_1^\frac12\leq|x|\leq\exp\exp(3A_4)T_1^\frac12}|u(0,x)|^ddx\geq\exp(-\exp\exp A_5).
}
At this point we undo the original rescaling so that $T_1$ is explicit. This estimate can be summed over geometrically separated scales $T_1\in[A_1^2N_0^{-2},T/100]$ to conclude
\eqn{
\int_\Rd|u(0,x)|^ddx\geq\exp(-\exp\exp A_5)\log(TN_0^2)
}
which implies the result when compared to the upper bound \eqref{critical}.
\end{proof}

\section{Proof of Theorems \ref{blowupthm} and \ref{regularitythm}}\label{proofoftheorems}

As in \cite{tao}, Theorem \ref{blowupthm} is obtained easily from Theorem \ref{regularitythm} combined with, say, the Prodi-Serrin-Ladyzhenskaya blowup criterion.

\begin{proof}[Proof of Theorem \ref{regularitythm}]
We increase $A$ so that $A\geq C_0$ and rescale so that $t=1$. By Propositions \ref{prop1} and \ref{prop2} in the converse, we have that
\eq{\label{Nstar}
\|P_Nu\|_{L_{t,x}^\infty([\frac12,1]\times\Rd)}\leq A_1^{-1}N
}
for all
\eqn{
N\geq N_*:=2\exp\exp\exp\exp(A_6).
}
Starting with the decomposition $u=\ulin+\unlin$ on $[0,1]$ and differentiating to reach $\omega=\wlin+\wnlin$, we define the enstrophy-type quantities
\eqn{
E_n(t):=\int_\Rd\frac{|\grad^n\wnlin(t)|^2}2dx
}
and compute
\eqn{
E_0'(t)&=-\int_\Rd|\grad\wnlin|^2dx-\int_\Rd\wnlin\cdot\langle\grad\unlin,\wnlin\rangle dx\\
&\quad\quad-\int_\Rd\wnlin\cdot(\langle\grad\unlin,\wlin\rangle+\langle\grad\ulin,\wnlin\rangle+\unlin\cdot\grad\wlin-f)dx\\
&=-X_1+X_2+X_3.
}
Here we have defined $\langle\grad u,\omega\rangle_{ij}:=(\dd_iu_k)\omega_{kj}-(\dd_ju_k)\omega_{ik}$ for a vector field $u$ and 2-form $\omega$ so that we may represent the Lie derivative as $\mathcal L_u\omega=\langle\grad u,\omega\rangle+u\cdot\grad \omega$.

Clearly $X_1\geq0$.
By Littlewood-Paley decomposition and Plancherel we have
\eqn{
X_2(t)&=-\sum_{N_1,N_2,N_3}\int_\Rd P_{N_1}\wnlin\cdot\langle\grad P_{N_2}\unlin,P_{N_3}\wnlin\rangle dx\\
&\lesssim\sum_{N_1\sim N_2\gtrsim N_3}\|P_{N_1}\wnlin\|_{L_x^2(\Rd)}\|P_{N_2}\wnlin\|_{L_x^2(\Rd)}\|P_{N_3}\wnlin\|_{L_x^\infty(\Rd)}.
}
Applying Lemma \ref{frequencylocalized} and \eqref{Nstar} for $N_3$ smaller or larger than $N_*$ respectively, we arrive at
\eqn{
X_2(t)&\lesssim\sum_{N_1}\|P_{N_1}\wnlin(t)\|_{L_x^2(\Rd)}^2(A^{O(1)}N_*^2+A_1^{-1}N_1^2)\\
&\lesssim \|\grad\unlin(t)\|_{L_x^2(\Rd)}^2A^{O(1)}N_*^2+A_1^{-1}X_1.
}
By H\"older's inequality, \eqref{ulin}, \eqref{unlin}, and \eqref{force}, we have for $t\in[\frac12,1]$
\eqn{
X_3(t)\leq(\|\grad\unlin(t)\|_{L_x^2}^2+1)A^{O(1)}.
}
Integrating in time using \eqref{sharpenergy} and Gronwall's inequality, we find that for any $\frac12\leq t_1\leq t_2\leq1$,
\eqn{
E_0(t_2)-E_0(t_1)\leq N_*^2A^{O(1)}.
}
At the same time, by \eqref{sharpenergy}, there exists a $t_0\in[1/2,3/4]$ such that $E_0(t_0)\leq A^{O(1)}$. Thus
\eq{\label{E0}
\sup_{t\in[\frac34,1]}E_0+2\int_\frac34^tE_1(t)dt\leq N_*^2A^{O(1)}.
}
Next we compute using \eqref{sharpequation}
\eqn{
E_n'(t)&=-Y_1+Y_2+Y_3+Y_4+Y_5
}
where
\eqn{
Y_1(t)&=\int_\Rd|\grad^{n+1}\wnlin|^2,\\
Y_2(t)&=-\sum_{k=0}^n\binom nk\int_\Rd\grad^n\wnlin\cdot\langle\grad\grad^{n-k}\unlin,\grad^k\wnlin\rangle dx,\\
Y_3(t)&=-\sum_{k=1}^n\binom nk\int_\Rd\grad^n\wnlin\cdot(\grad^k\unlin\cdot\grad\grad^{n-k}\wnlin)dx,\\
Y_4(t)&=-\sum_{k=1}^n\binom nk\int_\Rd\grad^n\wnlin\cdot(\grad^k\ulin\cdot\grad\grad^{n-k}\wnlin)dx\\
Y_5(t)&=-\int_\Rd\grad^n\wnlin\cdot\grad^n(\langle\grad\unlin,\wlin\rangle+\langle\grad\ulin,\wnlin\rangle-\unlin\cdot\grad\wlin-\curl f)dx.
}
We then take the Littlewood-Paley decompositions and estimate
\eqn{
Y_2(t)&=-\sum_{k=0}^n\binom nk\sum_{N_1,N_2,N_3}\int_\Rd\grad^nP_{N_1}\wnlin\cdot\langle\grad\grad^{n-k}P_{N_3}\unlin,\grad^kP_{N_2}\wnlin\rangle dx\\
&\leq I+II
}
where we decompose based on whether the top order derivatives that fall on the high frequency factors. Specifically, by H\"older, Lemma \ref{bernstein}, \eqref{critical}, and \eqref{Nstar},
\eqn{
I&\lesssim_n\sum_{k=0}^{n}\sum_{N_1\sim N_2\gtrsim N_3}\|\grad^nP_{N_1}\wnlin\|_{L_x^2(\Rd)}\|\grad^kP_{N_2}\wnlin\|_{L_x^2(\Rd)}\|\grad^{n-k+1}P_{N_3}\unlin\|_{L_x^\infty(\Rd)}\\
&\lesssim\sum_{k=0}^{n}\sum_{N_1\sim N_2}\|\grad^nP_{N_1}\wnlin\|_{L_x^2(\Rd)}\|\grad^kP_{N_2}\wnlin\|_{L_x^2(\Rd)}(A^{O(1)}N_*^{n-k+2}+A_1^{-1}N_1^{n-k+2})\\
&\lesssim \sum_{k=0}^{n}A^{O(1)}N_*^{n-k+2}E_k(t)^\frac12E_n(t)^\frac12+A_1^{-1}Y_1(t),
}
and
\eqn{
II&\lesssim_n\sum_{k=1}^{n-1}\sum_{N_1\sim N_2\gtrsim N_3}\|\grad^kP_{N_1}\wnlin\|_{L_x^2(\Rd)}\|\grad^{n-k}P_{N_2}\wnlin\|_{L_x^2(\Rd)}\|\grad^nP_{N_3}\wnlin\|_{L_x^\infty(\Rd)}\\
&\lesssim\sum_{k=1}^{n-1}\sum_{N_1\sim N_2}\|\grad^kP_{N_1}\wnlin\|_{L_x^2(\Rd)}\|\grad^{n-k}P_{N_2}\wnlin\|_{L_x^2(\Rd)}(A^{O(1)}N_*^{n+2}+A_1^{-1}N_1^{n+2})\\
&\lesssim\sum_{k=1}^{n-1}A^{O(1)}N_*^{n+2}E_k(t)^\frac12E_{n-k}(t)^\frac12+A_1^{-1}Y_1(t).
}
Next, $Y_3(t)$ contains essentially the same terms and admits the same bounds (note crucially the exclusion of $k=0$ by incompressiblity). By Cauchy-Schwarz and \eqref{ulin},
\eqn{
Y_4(t)\lesssim_n\sum_{k=1}^nA^{O(1)}E_n(t)^\frac12E_{n-k+1}(t)^\frac12.
}
Finally, by \eqref{ulin}, \eqref{unlin}, \eqref{force}, and integration by parts,
\eqn{
Y_5(t)\lesssim_n A^{O(1)}E_n(t)^\frac12\Big(1+\sum_{k=0}^nE_k(t)^\frac12\Big)+A^{O(1)}.
}
In total, combining some terms with Young's inequality,
\eqn{
E_n'(t)\leq A^{O_n(1)}N_*^2E_n(t)+N_*^{2n+2}\sum_{k=0}^{n-1}E_k(t)+A^{O_n(1)}.
}
Inductively applying Gronwall's inequality (at each step using the pigeonhole principle to find an initial time), starting with \eqref{E0} as a base case, implies
\eqn{
\sup_{t\in[t_n,1]}\int_\Rd|\grad^n\wnlin(t)|^2dx+\int_{t_n}^1\int_\Rd|\grad^{n+1}\wnlin(t)|^2dxdt\leq N_*^{O_n(1)}
}
for an increasing sequence $t_n\in[\frac12,1]$. The claimed $L_{t,x}^\infty$ estimates are immediate by \eqref{ulin} and Sobolev embedding, taking $n$ sufficiently large depending on $d$.
\end{proof}

\bibliographystyle{abbrv}
\bibliography{references}

\end{document}